\theoremstyle{plain}
\newtheorem{theo}{Theorem}
\newtheorem{prop}{Proposition}[section]
\newtheorem{coro}[prop]{Corollary}
\newtheorem{lemma}[prop]{Lemma}
\theoremstyle{definition}
\newtheorem{example}[prop]{Example}
\theoremstyle{remark}
\newtheorem{rem}[prop]{Remark}
\newcommand\Flag{\operatorname{Flag}}
\newcommand\Frenet[1]{\mathfrak{F}_{#1}}
\newcommand{\Word}{{\mathbf W}}
\newcommand\SO{\operatorname{SO}}
\newcommand\SL{\operatorname{SL}}
\newcommand\GL{\operatorname{GL}}
\newcommand\Lo{\operatorname{Lo}}
\newcommand\Up{\operatorname{Up}}
\newcommand\slalgebra{\operatorname{\mathfrak{sl}}}
\newcommand\so{\operatorname{\mathfrak{so}}}
\newcommand\lo{\operatorname{\mathfrak{lo}}}
\newcommand\up{\operatorname{\mathfrak{up}}}
\newcommand\Spin{\operatorname{Spin}}
\newcommand\spin{\mathfrak{spin}}
\newcommand\Cliff{\operatorname{Cl}}
\newcommand\inv{\operatorname{inv}}
\newcommand\Inv{\operatorname{Inv}}
\newcommand\Diag{\operatorname{Diag}}
\newcommand\diag{\operatorname{diag}}
\newcommand\B{\operatorname{B}}
\newcommand\Quat{\operatorname{Quat}}
\newcommand\jacobi{\lambda}
\newcommand{\longacute}{\operatorname{acute}}
\newcommand{\longgrave}{\operatorname{grave}}
\newcommand{\longhat}{\operatorname{hat}}
\newcommand{\chop}{\operatorname{chop}}
\newcommand{\adv}{\operatorname{adv}}
\newcommand{\iti}{\operatorname{iti}}
\newcommand{\swminor}{\operatorname{swminor}}
\newcommand{\ba}{{\mathbf{a}}}
\newcommand{\nmesmo}{\llbracket  n \rrbracket}
\newcommand{\nmaisum}{\llbracket n+1 \rrbracket}
\newcommand{\transpose}{\top}
\newcommand{\mult}{\operatorname{mult}}
\newcommand{\NN}{{\mathbb{N}}}
\newcommand{\ZZ}{{\mathbb{Z}}}
\newcommand{\RR}{{\mathbb{R}}}
\newcommand{\Ss}{{\mathbb{S}}}
\newcommand{\BB}{{\mathbb{B}}}
\newcommand{\HH}{{\mathbb{H}}}
\newcommand{\cL}{{\cal L}}
\newcommand{\cU}{{\cal U}}
\newcommand{\cD}{{\cal D}}
\newcommand{\bi}{{\mathbf{i}}}
\newcommand{\bj}{{\mathbf{j}}}
\newcommand{\bL}{{\mathbf{L}}}
\newcommand{\bQ}{{\mathbf{Q}}}
\newcommand{\fa}{{\mathfrak a}}
\newcommand{\fh}{{\mathfrak h}}
\newcommand{\fu}{{\mathfrak u}}
\newcommand{\fl}{{\mathfrak l}}
\newcommand{\fn}{{\mathfrak n}}
\newcommand{\Pos}{\operatorname{Pos}}
\newcommand{\Neg}{\operatorname{Neg}}
\newcommand{\Bru}{\operatorname{Bru}}
\newcommand{\sign}{\operatorname{sign}}
\begin{document}

\title{Locally convex curves and \\
the Bruhat stratification of the spin group}
\author{Victor Goulart
\footnote{jose.g.nascimento@ufes.br;
Departamento de Matem\'atica, UFES,
Av. Fernando Ferrari 514; Campus de Goiabeiras, Vit\'oria, ES 29075-910, Brazil;
Departamento de Matem\'atica, PUC-Rio,  
R. Marqu\^es de S. Vicente 255, Rio de Janeiro, RJ 22451-900, Brazil. 
} 
\and Nicolau C. Saldanha\footnote{saldanha@puc-rio.br; Departamento de Matem\'atica, PUC-Rio.
}}
\date{\today}
%\keywords{locally convex curves, nondegenerate curves, Bruhat stratification, Schubert stratification, Spin group, totally positive matrices} 
%\subjclass[2010]{Primary 52A55, \; Secondary 34B05}

\maketitle

\begin{abstract}

%\scriptsize{
{
We study the lifting of the Schubert stratification of the homogeneous space of complete real flags of $\RR^{n+1}$ to its universal covering group $\Spin_{n+1}$. We call the lifted strata the \emph{Bruhat cells} of $\Spin_{n+1}$, in keeping with the homonymous classical decomposition of reductive algebraic groups. We present explicit  parameterizations for these Bruhat cells in terms of minimal-length expressions $\sigma=a_{i_1}\cdots a_{i_k}$ for permutations 
$\sigma\in S_{n+1}$ in terms of the $n$ generators $a_i=(i,i+1)$. These parameterizations are compatible with the %both right and left weak 
Bruhat orders in the Coxeter-Weyl group $S_{n+1}$. 
This stratification is an important tool in the study of locally convex curves;
we present a few such applications. 
%$\gamma:\RR\to\RR^{n+1}$.
}

\medskip 

\end{abstract}

\section{Introduction}
\label{sect:intro}

Fix $n\in\NN$, $n\geq 2$, and consider the group $\SO_{n+1}$ of 
%orientation-preserving orthogonal transformations of $\RR^{n+1}$ 
unit determinant real orthogonal matrices of order $n+1$ 
and its universal %(double) 
covering group $\Spin_{n+1}$. 
The latter can also be described in terms of the real Clifford algebra
induced by the standard Euclidean inner product 
%$\langle\cdot\vert\cdot\rangle$ 
of $\RR^{n+1}$ 
\cite{Atiyah-Bott-Shapiro, Lawson-Michelsohn}.
%MAYBE SUPRESS THE FOLLOWING PARAGRAPH
%This is the $\RR$-algebra $\Cliff_{n+1}=T^\bullet(\RR^{n+1}) / \mathfrak{I}$ obtained as the quotient of the real tensor algebra of $\RR^{n+1}$ by the double-sided ideal $\mathfrak{I}$ generated by the elements of the form $\langle v\vert v\rangle+v\otimes v$, for $v\in\RR^{n+1}$. We then can define $\Spin_{n+1}$ as the group of products $u_1\cdots u_{2k}\in\Cliff_{n+1}^\ast$ of an even number of vectors $u_1,\ldots, u_{2k}\in\Ss^{n}\subset\RR^{n+1}$ of unit Euclidean norm. The double covering homomorphism of Lie groups $\Pi:\Spin_{n+1}\to\SO_{n+1}$ is given by the adjoint action of $\Spin_{n+1}$ restricted to $\RR^{n+1}\subset\Cliff_{n+1}$: $\forall\varphi\in\Spin_{n+1},\,\forall v\in\RR^{n+1}\,(\Pi(\varphi)(v)=\varphi v \varphi^{-1})$. 
%Its kernel is $\{\pm 1\}$.
Familiarity with Clifford algebras is not required to read this paper though, 
since the required facts will be obtained from scratch. 
%Elements $\varphi\in\Spin_{n+1}$ are regarded as homotopy classes 
%(with fixed endpoints) $\varphi=[\alpha]$ of continuous paths $\alpha:[0,1]\to\SO_{n+1}$ issuing from the identity matrix $\alpha(0)=I$. The Lie group structure of $\Spin_{n+1}$ is such that $\Pi:\Spin_{n+1}\to\SO_{n+1}$, $\Pi([\alpha])=\alpha(1)$, is a Lie group homomorphism. We denote its kernel by $\{\pm 1\}$.

Let $S_{n+1}$ be the group of permutations of the set $\nmaisum=\{1,2,\ldots,n+1\}$. 
We denote the action of $S_{n+1}$ on $\nmaisum$ by 
$(\sigma,k)\mapsto k^\sigma$ (rather than $\sigma(k)$), so that 
$k^{\sigma_1\sigma_2}=(k^{\sigma_1})^{\sigma_2}$. 
We regard $S_{n+1}$ as the Coxeter-Weyl group $A_n$  
generated by the $n$ transpositions 
$a_1 = (1,2)$, $a_2 = (2,3)$, \dots, $a_n = (n,n+1)$. 
%To have $n$ generators (instead of $n-1$) for this and other 
%finite groups to be defined below is the main reason for our 
%otherwise cumbersome choice of dimension $n+1$. 

A \emph{reduced word} for a permutation $\sigma\in S_{n+1}$ is 
an expression of $\sigma$ as a product of the generators $a_j$ with 
minimal number of factors. 
This is $\inv(\sigma)=
\operatorname{card}
\left(\{(i,j)\in\nmaisum^2\,\vert\,(i<j)\land(i^\sigma>j^\sigma)\}\right)$, the number of inversions of $\sigma$.

Let $\B_{n+1}$ be the hyperoctahedral group of signed permutation matrices of order $n+1$, i.e., orthogonal matrices $P$ such that there exists a permutation $\sigma\in S_{n+1}$ with 
$e_j^\top P = \pm e^\top_{j^{\sigma}}$ for all $j\in\nmaisum$.
Here and henceforth, $(e_1,\ldots,e_{n+1})$ is the 
canonical basis of $\RR^{n+1}$. 
%The group $B_{n+1}$ is a Coxeter group (whence the notation)
%but we shall not use this presentation.
Let $\B_{n+1}^{+} = \B_{n+1} \cap \SO_{n+1}$ and 
$\Diag^{+}_{n+1} \subset \B^+_{n+1}$ 
be the normal subgroup of diagonal matrices, 
isomorphic to $\{ \pm 1 \}^n$. 
We have $\B^+_{n+1}/\Diag^+_{n+1}\approx S_{n+1}$, 
the quotient map being denoted by $P\mapsto\sigma_{P}$.
%We have the exact sequence of group homomorphisms 
%\[\{1\}\to\Diag^+_n\hookrightarrow\B^+_n\stackrel{\sigma}{\to} S_n\to\{1\}.\] 
%where, given $P\in\B^+_n$, $\sigma_P\in S_n$ is the only permutation such that, for all $i\in\llbracket n\rrbracket$, we have $e^{\top}_i P=\pm e^{\top}_{i^{\sigma_P}}$. 
Lifting by the covering map $\Pi:\Spin_{n+1}\to\SO_{n+1}$, 
we have the exact sequences 
\[1\to\Quat_{n+1}\hookrightarrow\widetilde \B^+_{n+1}\stackrel{\sigma}{\to} S_{n+1}\to1,\quad 1\to\{\pm 1\}\hookrightarrow\Quat_{n+1}\stackrel{\Pi}{\to} \Diag^+_{n+1}\to1,\]
where we define $\widetilde \B^+_{n+1}=\Pi^{-1}[\B^+_{n+1}]$ and $\Quat_{n+1}=\Pi^{-1}[\Diag^+_{n+1}]$. 

Recall that $\Spin_{3}$ is isomorphic to $\Ss^{3}\subset\HH$, 
the group of quaternions with unit norm. 
Under this identification, we have 
$\operatorname{Quat}_{3} =
\{\pm 1,\pm\mathbf{i}, \pm \mathbf{j}, \pm\mathbf{k}\}$. 
The groups $\Quat_{n+1}$ are thus generalizations 
of the classical quaternion group $Q_8$, hence the notation;  
the group $\Quat_{n+1}$ is a subgroup of index $2$
of the \emph{Clifford group} as defined in \cite{Lawson-Michelsohn}
(notice that \cite{Atiyah-Bott-Shapiro} uses this term differently).
%\subset\HH$ 
%(see \cite{Arnold}).
%The group $\Quat_{n+1}$ is a subgroup of index $2$
%of the \emph{Clifford group} as defined in \cite{Lawson-Michelsohn}:
%we match their notation with ours by writing $\hat a_i = e_i e_{i+1}$.
%We do not assume familiarity with this vast subject.

We now describe generators for $\widetilde\B^+_{n+1}$ 
and $\Quat_{n+1}$ closely related to the Coxeter generators $a_j$ of $S_{n+1}$. 
For each $j\in\nmesmo$, let
$\fa_j=e_{j+1}e_j^\top-e_je_{j+1}^\top\in\so_{n+1}$ be 
the matrix whose only nonzero entries are 
$(\fa_j)_{j+1,j}=+1$ and $(\fa_j)_{j,j+1}=-1$. 
Set $\alpha_j(\theta)=\exp(\theta\fa_j)$,
a 1-parameter subgroup of $\SO_{n+1}$.
% comprised by the rotations of the plane $\RR e_j\oplus\RR e_{j+1}$
% of $\RR^n$: 
% $\alpha_j:\RR\to\SO_{n+1}$, $\alpha_j(\theta)=\exp(\theta\fa_j)$, where 
We denote by the same symbol the lift to $\Spin_{n+1}$, 
so that $\alpha_j:\RR\to\Spin_{n+1}$ is also a 1-parameter subgroup. 
Set 
$\acute a_j=\alpha_j(\frac\pi2)\in\widetilde\B^+_{n+1}$ and 
$\hat a_j=\acute a_j^2\in\Quat_{n+1}$. 
Notice that $\acute a_j^4=\hat a_j^2=-1$  
% and therefore  $\acute a_j$, and $ \hat a_j$ have orders $8$ and $4$, respectively. 
and $\sigma_{\acute a_j}=a_j\in S_{n+1}$. 
The elements $\acute a_j$ and $\hat a_j$, $j\in\nmesmo$, generate $\widetilde\B^+_{n+1}$ and $\Quat_{n+1}$, respectively. 
The group $\Spin_{n+1}$ can be interpreted as a subset of the 
associative algebra with basis $1, \hat a_1, \hat a_2, \hat a_1 \hat a_2, \hat a_3, \ldots$ 
(with the product inherited from $\Quat_{n+1}$). 
With the identification $\hat a_j=e_{j+1}e_{j}$
(in the notation of  \cite{Atiyah-Bott-Shapiro, Lawson-Michelsohn}),
this is the subalgebra $\Cliff^0_{n+1}\subset\Cliff_{n+1}$ 
of even elements of the Clifford algebra. 
In this algebra, we have, for instance,
\begin{equation}
\label{equation:clifford}
%\alpha_j(\theta) =
%\cos\left(\frac{\theta}{2}\right) + \hat a_j \sin\left(\frac{\theta}{2}\right),
%\qquad
%\acute a_j = \frac{\sqrt{2}}{2}(1+\hat a_j);
\alpha_j(\theta) =
\cos(\theta/2) + \hat a_j \sin(\theta/2),
\qquad
\acute a_j =(1+\hat a_j)/\sqrt{2};
\end{equation}
this point of view will not be necessary but is sometimes helpful.
%In Clifford algebra notation, we have 
%$\alpha_j(\theta)=\exp(\theta\tilde\fa_j)=\cos(\frac\theta2)-\sin(\frac\theta2)e_je_{j+1}$, 
%where $\tilde\fa_j=-\frac12 e_j e_{j+1}\in\spin_{n+1}$. 
%Also, $\acute a_j=\frac{\sqrt{2}}2(1-e_je_{j+1})$ and 
%$\hat a_j=-e_je_{j+1}$. 
%We identify $\tilde\fa_j$ and the matrix $\fa_j$.

Now consider the homogeneous space $\Flag_{n+1}$ of the complete real flags 
\[\{0\}\subset\RR v_1\subset \RR v_1\oplus \RR v_2\subset\cdots\subset \RR v_1\oplus\cdots\oplus\RR v_{n+1} =\RR^{n+1}\] 
of $\RR^{n+1}$. 
This is a smooth manifold diffeomorphic to each one of the 
following spaces of left cosets:  
$\GL_{n+1}/\Up_{n+1}\approx  
\SO_{n+1}/\Diag_{n+1}^+\approx 
\Spin_{n+1}/\Quat_{n+1}$ 
(here, $\GL_{n+1}=\GL(n+1,\RR)$ and $\Up_{n+1}$ is the subgroup of upper triangular matrices).  
The group $\Spin_{n+1}$ is the $2^{n+1}$-fold universal covering of $\Flag_{n+1}$. 

Recall the classical decomposition of $\Flag_{n+1}$ into the Schubert cells 
$\mathcal{C}_{\sigma}$, indexed by permutations $\sigma\in S_{n+1}$
\cite{Bernstein-Gelfand-Gelfand, Chevalley, Demazure, Konstant, Verma}. 
These cells, particularly the intersection of translated cells, have been extensively studied \cite{Fomin-Zelevinsky1, Rietsch, Shapiro-Shapiro-Vainshtein1, Shapiro-Shapiro-Vainshtein2}; 
see also \cite{Fomin-Zelevinsky2}.
The \emph{unsigned Bruhat cell} $\Bru_\sigma\subset\Spin_{n+1}$ is the preimage 
under the projection of the Schubert cell $\mathcal C_\sigma\subset\Flag_{n+1}$. 
%with $2^{n+1}$ connected components.   
Equivalently, for $z\in\Spin_{n+1}$ and $\sigma\in S_{n+1}$, we have 
$z\in\Bru_\sigma$ if and only if there exist $U_1, U_2\in\Up_{n+1}$ 
and $P\in\B_{n+1}$ such that 
$\Pi(z)=U_1 P U_2$ and $\sigma_P=\sigma$.  
We have $\overline{\Bru_{\sigma}}\subseteq
\overline{\Bru_{\rho}}$ if and only if 
$\sigma\leq\rho$ in the (strong) Bruhat order 
\cite{Bjorner-Brenti, Humphreys, Verma}:  
given $\sigma_0,\sigma_1\in S_{n+1}$, we write $\sigma_0\leq\sigma_1$ 
if and only if there is a reduced word for $\sigma_0$ in terms of the 
Coxeter generators $a_j$ that is a subexpression 
of a reduced word for $\sigma_1$. 
%In the case there is a reduced word for $\sigma_0$ that is an initial string 
%of a reduced word for $\sigma_1$, we write $\sigma_0\leq_R\sigma_1$. 
%This suborder of $\leq$ is the so called weak right Bruhat order 
%\cite{Bjorner-Brenti}. 
%The weak left Bruhat order $\leq_L$ is defined likewise %mutatis mutandis 
%(final strings instead of initial strings).
%The three relations $\leq, \leq_R, \leq_L$  
The relation $\leq$ is a
directed graded partial order with rank function $\inv$, 
minimum $e$ (the identity) and maximum $\eta:j\mapsto n+2-j$, 
$\eta=a_1 a_2 a_1 a_3 a_2 a_1 \cdots a_n a_{n-1}\cdots a_2 a_1$  
(called the Coxeter element, usually denoted by $w_0$).
%Also, the weak right Bruhat order $\leq_R$ is defined as the transitive closure of the following relation $\vartriangleleft_R$ in $S_{n+1}$: 
%$\forall \sigma_0,\sigma_1\in S_{n+1}
%\left(
%\left(\sigma_0\vartriangleleft_R\sigma_1\right)\leftrightarrow
%\left((\sigma_0\leq\sigma_1\right)\land
%\left(\exists j\in\nmesmo(\sigma_1=\sigma_0a_j)\right))
%\right)$.

Each connected component of an unsigned Bruhat cell contains exactly one element of $z\in\widetilde\B^+_{n+1}$. 
Following \cite{Saldanha-Shapiro}, 
we call the connected component of $z\in\widetilde\B^+_{n+1}$ in 
$\Bru_{\sigma_z}$ a \emph{signed Bruhat cell}, denoted by $\Bru_z$,  
and call the cell decomposition 
%\begin{equation}
%\label{equation:Bruhat}
\[
\Spin_{n+1}=\bigsqcup_{z\in\widetilde\B^+_{n+1}}\Bru_z
\]
%\end{equation}   
the \emph{Bruhat stratification} of $\Spin_{n+1}$. 
We describe this stratification using the elementary $UPU$ Bruhat decomposition of invertible matrices. 
Also, familiarity with Schubert calculus is not assumed.

The group $\Quat_{n+1}$ acts freely and transitively on the collection of connected components of an unsigned Bruhat cell by left multiplication. 
%For all $q\in\Quat_{n+1}$ and all $z\in\widetilde\B^+_{n+1}$, we have 
%$\sigma_{qz}=\sigma_z$. 
Thus, the following result yields explicit parameterizations for all the 
signed Bruhat cells of $\Spin_{n+1}$.
%, in a way compatible with the weak right Bruhat order.

\begin{theo}
\label{theo:Bruhat} 
Given reduced words $a_{i_1}\cdots a_{i_k}< a_{i_1}\cdots a_{i_k}a_j$  
for consecutive permutations in $S_{n+1}$ and signs
$\varepsilon_1,\ldots,\varepsilon_k, \varepsilon\in\{\pm1\}$, 
set $z_1=
(\acute a_{i_1})^{\varepsilon_1}\cdots (\acute a_{i_k})^{\varepsilon_k}$, 
$z_0=z_1(\acute a_j)^{\varepsilon}\in\widetilde\B^+_{n+1}$. 
Given $q\in\Quat_{n+1}$, the map 
$\Phi: \Bru_{qz_1} \times (0,\pi) \to \Bru_{qz_0}$, 
$\Phi(z,\theta) = z\alpha_j(\varepsilon\theta)$,  
is a diffeomorphism. 
%Similarly, given reduced words 
%$a_{i_1}\ldots a_{i_k}<_R a_j a_{i_1}\ldots a_{i_k}$  
%for consecutive permutations in $S_{n+1}$, set 
%$z_0=\acute a_{i_1}\ldots \acute a_{i_k}, 
%z_1=\acute a_j z_0\in\widetilde\B^+_{n+1}$. 
%Given $q\in\Quat_{n+1}$, the map 
%\[ \Phi: \Bru_{qz_1} \times (0,\pi) \to \Bru_{qz_0}, \qquad
%\Phi(z,\theta) = \alpha_i(\theta)z \]
%is a diffeomorphism. 
\end{theo}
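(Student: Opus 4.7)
The plan is to verify that $\Phi$ is a smooth bijection with smooth inverse by establishing well-definedness, injectivity, and that its image is open and closed in the connected target. Since left multiplication by $q \in \Quat_{n+1}$ is a diffeomorphism of $\Spin_{n+1}$ sending $\Bru_{z_1}$ to $\Bru_{qz_1}$ and $\Bru_{z_0}$ to $\Bru_{qz_0}$, and intertwines $\Phi$ with its $q$-translate, the reduction to $q=1$ is immediate.

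The image containment $\Phi(\Bru_{z_1} \times (0,\pi)) \subseteq \Bru_{z_0}$ and the injectivity of $\Phi$ both rest on the classical Bruhat product rule $\mathcal{C}_\tau \cdot \mathcal{C}_{a_j} \subseteq \mathcal{C}_{\tau a_j}$ in the length-additive case $\tau < \tau a_j$. A direct UPU calculation in the $(j,j+1)$-block shows $\alpha_j(\varepsilon\theta) \in \Bru_{\acute a_j^\varepsilon}$ for $\theta \in (0,\pi)$, and in particular $\Pi(\alpha_j(\varepsilon\theta)) \in \mathcal{C}_{a_j}$. Combined with the hypothesis $\sigma_1 < \sigma_0 = \sigma_1 a_j$, this yields $\sigma(\Phi(z,\theta)) = \sigma_0$, and the correct connected component is selected by continuity, connectedness of $\Bru_{z_1}\times(0,\pi)$, and the distinguished value $\Phi(z_1,\pi/2) = z_1 \acute a_j^\varepsilon = z_0 \in \Bru_{z_0}$. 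For injectivity, if $\Phi(z,\theta) = \Phi(z',\theta')$ with $\theta < \theta'$, then $z = z'\alpha_j(\varepsilon(\theta'-\theta))$ would force $z \in \mathcal{C}_{\sigma_1}\cdot\mathcal{C}_{a_j}\subseteq\mathcal{C}_{\sigma_0}$, contradicting $z \in \mathcal{C}_{\sigma_1}$; so $\theta = \theta'$ and $z = z'$.

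Both source and target have dimension $\inv(\sigma_0) = k + 1$, and $\Bru_{z_0}$ is connected, so it suffices to show the image of $\Phi$ is both open and closed. Closedness follows from a compactness argument: any convergent sequence $\Phi(z_n,\theta_n) \to w \in \Bru_{z_0}$ admits (after subsequence) $(z_n,\theta_n) \to (z_*,\theta_*) \in \overline{\Bru_{z_1}} \times [0,\pi]$; the boundary cases $\theta_* \in \{0,\pi\}$ (in which $\alpha_j(\varepsilon\theta_*) \in \Quat_{n+1}$) and $\sigma(z_*) < \sigma_1$ are both ruled out because the product rule would place $w \in \overline{\mathcal{C}_\tau}$ for some $\tau < \sigma_0$, inconsistent with $w \in \mathcal{C}_{\sigma_0}$; hence $(z_*,\theta_*) \in \Bru_{z_1} \times (0,\pi)$ and $w = \Phi(z_*,\theta_*)$.

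The hardest step is openness, equivalently, that $\Phi$ is a local diffeomorphism (i.e., $d\Phi$ is everywhere injective). I would proceed by induction on $k$: the base case $k = 0$ (where $\Bru_{z_1} = \{1\}$ and $\Phi$ is the 1-parameter subgroup $\theta \mapsto \alpha_j(\varepsilon\theta)$) is immediate. In the inductive step, the theorem at length $k-1$ provides an explicit Bott--Samelson-type chart $(0,\pi)^k \to \Bru_{z_1}$ via iterated right multiplication by the $\alpha_{i_\ell}$; in these coordinates $\Phi$ becomes
$(\phi_1,\ldots,\phi_k,\theta) \mapsto \alpha_{i_1}(\varepsilon_1\phi_1)\cdots\alpha_{i_k}(\varepsilon_k\phi_k)\alpha_j(\varepsilon\theta)$,
and the non-vanishing of its Jacobian reduces, via a standard tangent-space computation in $\spin_{n+1}$, to the linear independence of certain $\operatorname{Ad}$-translates of $\fa_{i_1},\ldots,\fa_{i_k},\fa_j$, which holds precisely because $a_{i_1}\cdots a_{i_k}a_j$ is a reduced word.
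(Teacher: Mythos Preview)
Your strategy is sound and the argument can be completed, but it diverges from the paper's proof in the surjectivity/smoothness step, and your handling of the immersion is unnecessarily indirect.

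The paper agrees with you on well-definedness (a direct $UPU$ manipulation using $\sigma_1 \vartriangleleft \sigma_1 a_j$) and on injectivity (the same ``two cells are disjoint'' trick). For surjectivity and smoothness of the inverse, however, the paper does \emph{not} argue that the image is open and closed. Instead it writes any $z \in \Bru_{qz_0}$ as $U_1 q z_0 U_2$ and observes that the matrix $\acute a_j U_2$ has a single positive subdiagonal entry at position $(j+1,j)$; the polar angle of the pair $((\acute a_j U_2)_{j+1,j}, (\acute a_j U_2)_{j+1,j+1})$ defines a smooth function $\theta_j(U_2) \in (0,\pi)$, and one checks directly that $z\,\alpha_j(-\theta_j(U_2)) \in \Bru_{qz_1}$. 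This gives an explicit smooth formula for $\Phi^{-1}$ in one stroke, with no induction, no compactness, and no Jacobian computation.

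Your open/closed argument works, but the immersion step is both easier and harder than you suggest. Easier, because you do not need the ``linear independence of $\operatorname{Ad}$-translates'' formulation at all: once you know $\Phi$ lands in $\Bru_{z_0}$, the flow $s \mapsto z\,\alpha_j(s)$ leaves $\Bru_{z_1}$ for every nonzero $s \in (-\pi,\pi)$, so the left-invariant field $X_{\fa_j}$ is transverse to $\Bru_{z_1}$ at every point; since $\operatorname{Ad}(\alpha_j(\varepsilon\theta))\fa_j = \fa_j$, this immediately gives injectivity of $d\Phi$. Harder, because your stated justification (``holds precisely because $a_{i_1}\cdots a_{i_k}a_j$ is a reduced word'') is the right heuristic at the central point $\phi_\ell = \theta = \pi/2$ but is not obviously uniform over $(0,\pi)^{k+1}$ without appealing either to the transversality above or to the induction hypothesis to first identify $T_z\Bru_{z_1}$. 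Also note that openness is already free from invariance of domain once you have a continuous injection between manifolds of equal dimension, so the immersion is only needed for the \emph{diffeomorphism} upgrade, not for surjectivity.

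In short: your proof can be made complete, but the paper's explicit inverse is shorter and yields the smooth function $\Theta_j$ of Remark~\ref{rem:bigtheta}, which is used later.
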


A similar result for the case $a_{i_1}\cdots a_{i_k}< a_j a_{i_1}\cdots a_{i_k}$ 
%the weak left Bruhat order $<_L$ 
is also available.

\begin{coro}
\label{coro:Bruhat1}
In the conditions of the theorem, i.e., with 
$z_1=
(\acute a_{i_1})^{\varepsilon_1}\cdots (\acute a_{i_k})^{\varepsilon_k}$, 
$z_0=z_1(\acute a_j)^{\varepsilon}\in\widetilde\B^+_{n+1}$and $q\in\Quat_{n+1}$, 
we have the inclusion 
%of signed Bruhat cells 
$\overline{\Bru_{qz_1}}\subset\overline{\Bru_{qz_0}}$.
\end{coro}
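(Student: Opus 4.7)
The plan is to read the desired inclusion directly off the parameterization $\Phi$ furnished by Theorem~\ref{theo:Bruhat}: the image $\Bru_{qz_0}$ is obtained by pushing $\Bru_{qz_1}$ along the one-parameter subgroup $\theta\mapsto\alpha_j(\varepsilon\theta)$ for $\theta$ in the open interval $(0,\pi)$, and the closure of that image must pick up the starting stratum $\Bru_{qz_1}$ at the endpoint $\theta=0$.

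Concretely, I would fix an arbitrary $z\in\Bru_{qz_1}$ and consider the curve $\gamma:(0,\pi)\to\Spin_{n+1}$, $\gamma(\theta)=z\alpha_j(\varepsilon\theta)=\Phi(z,\theta)$. By Theorem~\ref{theo:Bruhat}, $\gamma(\theta)\in\Bru_{qz_0}$ for every $\theta\in(0,\pi)$. Since $\alpha_j$ is a continuous one-parameter subgroup with $\alpha_j(0)=1$ and multiplication in $\Spin_{n+1}$ is continuous, we have $\lim_{\theta\to 0^+}\gamma(\theta)=z\cdot 1=z$. Therefore $z\in\overline{\Bru_{qz_0}}$. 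As $z\in\Bru_{qz_1}$ was arbitrary, this proves $\Bru_{qz_1}\subset\overline{\Bru_{qz_0}}$.

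Taking closures of both sides, $\overline{\Bru_{qz_1}}\subset\overline{\overline{\Bru_{qz_0}}}=\overline{\Bru_{qz_0}}$, which is the statement of the corollary.

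There is no genuine obstacle here: the entire content is packaged into the theorem, and the only minor point to verify is that the parameter domain $(0,\pi)$ is open at $0$ so that the limit $\theta\to 0^+$ is a true boundary limit rather than a point already in $\Bru_{qz_0}$. By iterating this one-step inclusion along any chain realizing $\sigma\leq\rho$ in the Bruhat order (each covering relation supplying a reduced-word extension of the form $a_{i_1}\cdots a_{i_k}<a_{i_1}\cdots a_{i_k}a_j$), one recovers the full order-theoretic statement $\overline{\Bru_{qz'}}\subset\overline{\Bru_{qz}}$ whenever $z'$ is obtained from $z$ by erasing factors; but this extension is not needed for the corollary as stated.
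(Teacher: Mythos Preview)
Your proof is correct and is exactly the natural argument the paper has in mind: the paper does not give an explicit proof but simply notes that Corollaries~\ref{coro:Bruhat1} and~\ref{coro:Bruhat2} ``follow easily from Theorem~\ref{theo:Bruhat}'', and your limiting argument $\theta\to 0^+$ along $\Phi(z,\theta)$ is precisely how one extracts the closure inclusion from that diffeomorphism.
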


\begin{coro}
\label{coro:Bruhat2}
Given $q\in\Quat_{n+1}$, a reduced word 
$a_{i_1}\cdots a_{i_k}\in S_{n+1}$, and signs
$\varepsilon_1,\ldots,\varepsilon_k\in\{\pm1\}$, 
the map 
$\Psi:(0,\pi)^k \to 
\Bru_{q(\acute a_{i_1})^{\varepsilon_1}\cdots (\acute a_{i_k})^{\varepsilon_k}}$ 
given by  
$\Psi(\theta_1, \ldots, \theta_k)=
q \alpha_{i_1}(\varepsilon_1\theta_1) \cdots 
\alpha_{i_k}(\varepsilon_k\theta_k)$ 
is a diffeomorphism.
\end{coro}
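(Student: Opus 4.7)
The plan is to prove this by induction on $k$, invoking Theorem~\ref{theo:Bruhat} repeatedly. The base case $k=0$ is essentially trivial: the word is empty, $(0,\pi)^0$ is a single point, and $\Bru_q$ is the connected component at $q$ of the preimage of the Schubert cell $\mathcal{C}_e$ of the identity permutation. Since this Schubert cell is a single point (the standard flag), its preimage is the discrete group $\Quat_{n+1}$, so the connected component at $q$ is simply $\{q\}$, and $\Psi$ trivially sends the unique point of $(0,\pi)^0$ to $q$.

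For the inductive step, suppose the statement holds for reduced words of length $k$ and consider a reduced word $a_{i_1}\cdots a_{i_{k+1}}$. Set
$z_1=(\acute a_{i_1})^{\varepsilon_1}\cdots (\acute a_{i_k})^{\varepsilon_k}$ and
$z_0=z_1(\acute a_{i_{k+1}})^{\varepsilon_{k+1}}$. Because the full word is reduced, so is its initial segment $a_{i_1}\cdots a_{i_k}$, and one has $\inv(\sigma_{z_0})=\inv(\sigma_{z_1})+1$; hence $\sigma_{z_1}$ and $\sigma_{z_0}$ are consecutive in the graded Bruhat order, which is precisely the hypothesis of Theorem~\ref{theo:Bruhat}. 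That theorem then furnishes a diffeomorphism
$\Phi:\Bru_{qz_1}\times(0,\pi)\to \Bru_{qz_0}$,
$\Phi(z,\theta)=z\alpha_{i_{k+1}}(\varepsilon_{k+1}\theta)$.

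The inductive hypothesis gives a diffeomorphism $\Psi_k:(0,\pi)^k\to \Bru_{qz_1}$ with $\Psi_k(\theta_1,\ldots,\theta_k)=q\alpha_{i_1}(\varepsilon_1\theta_1)\cdots \alpha_{i_k}(\varepsilon_k\theta_k)$. The map $\Psi$ of the corollary is then the composition $\Phi\circ(\Psi_k\times\id_{(0,\pi)})$, hence a diffeomorphism from $(0,\pi)^{k+1}$ onto $\Bru_{qz_0}$, completing the induction.

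No serious obstacle is expected; the proof is a clean induction driven entirely by Theorem~\ref{theo:Bruhat}. The only mildly delicate point is bookkeeping: one must keep the left factor $q\in\Quat_{n+1}$ fixed along the induction and verify at each stage that the cell landed in is indeed the signed Bruhat cell $\Bru_{qz_0}$, as opposed to some other component of the unsigned cell. This is handled automatically because Theorem~\ref{theo:Bruhat} is formulated for an arbitrary $q\in\Quat_{n+1}$ and the element $z_0$ produced at stage $k+1$ is exactly the one whose signed cell we wish to parameterize.
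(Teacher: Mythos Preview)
Your proof is correct and is exactly the natural induction the paper has in mind: the paper does not spell out a proof for this corollary but simply remarks that Corollaries~\ref{coro:Bruhat1} and~\ref{coro:Bruhat2} ``follow easily from Theorem~\ref{theo:Bruhat}'', and your induction on $k$ is precisely how one unwinds that. The bookkeeping you flag (that the prefix $a_{i_1}\cdots a_{i_k}$ of a reduced word is itself reduced, and that the signed cell is the correct one because Theorem~\ref{theo:Bruhat} is stated for arbitrary $q$) is handled correctly.
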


The reader might want to compare the previous results with 
\cite{Berenstein-Fomin-Zelevinsky}, dealing with 
totally positive matrices, particularly in nilpotent triangular groups. 
Totally positive matrices were introduced 
independently in \cite{Gantmacher-Krein} and 
\cite{Schoenberg} and have since found widespread 
applications \cite{Ando, Brenti, Karlin, Lusztig3}. 
The concept of totally positive elements has been 
generalized to a reductive group $G$ and its flag 
manifold by G. Lusztig \cite{Lusztig1, Lusztig2, Lusztig3} 
and to Grassmannians by A. Postnikov \cite{Postnikov1, Postnikov2}. 
%The relation of the subject with intersections of 
%Schubert cells has been studied in \cite{Fomin-Zelevinsky1, Rietsch, 
%Shapiro-Shapiro-Vainshtein1, Shapiro-Shapiro-Vainshtein2}. 
Our particular definition 
is analogous to that of \cite{Berenstein-Fomin-Zelevinsky}: 
this one is a 
good reference for facts mentioned 
without proof, particularly in Section \ref{sect:totallypositive}.

The second author was first led to consider similar stratifications 
while studying the homotopy type of certain spaces of parametric curves 
in the sphere $\Ss^n$ \cite{Saldanha3, Saldanha-Shapiro}. 
A map $\Gamma:J\to\Spin_{n+1}$, defined on an interval $J\subseteq\RR$, 
is called a \emph{locally convex curve} 
\cite{Alves-Saldanha, Goulart-Saldanha, Saldanha3, Saldanha-Shapiro}
if it is absolutely continuous (hence differentiable almost everywhere) and its 
logarithmic derivative has the form 
\[(\Gamma(t))^{-1}\Gamma'(t)=\sum_{j\in\nmesmo}\kappa_j(t)\fa_j\]
(wherever it is defined), where $\kappa_1,\ldots,\kappa_n:J\to(0,+\infty)$ are positive functions. 

Given a smooth locally convex curve $\Gamma$,  
the smooth curve $\gamma:J\to\RR^{n+1}$, 
$\gamma(t)=\Pi(\Gamma(t))e_1$, 
satisfies 
$\det(\gamma(t),\gamma'(t),\ldots,\gamma^{(n)}(t))>0$ 
for all $t\in J$.
A smooth parametric curve $\gamma:J\to\RR^{n+1}$ satisfying the inequality above is also called \emph{(positive) locally convex} or \emph{(positive) nondegenerate} \cite{Goulart-Saldanha, Khesin-Ovsienko, Khesin-Shapiro2, Little}. Such a curve $\gamma$ can be lifted to a locally convex curve 
$\Frenet{\gamma}$ in $\SO_{n+1}$ (and therefore in $\Spin_{n+1}$) 
by taking the orthogonal matrix $\Frenet{\gamma}(t)$ whose column-vectors are the result of applying the Gram-Schmidt algorithm to the ordered basis $(\gamma(t),\gamma'(t),\ldots,\gamma^{(n)}(t))$ of $\RR^{n+1}$. 
The orthogonal basis of $\RR^{n+1}$ thus obtained is the (generalized) Frenet frame of the space curve $\gamma$. The coefficients $\kappa_1,\ldots,\kappa_n$ of the logarithmic derivative of $\Frenet{\gamma}$ are the generalized curvatures of $\gamma$. 
The term locally convex comes from the fact that a nondegenerate curve 
$\gamma:J\to\RR^{n+1}$ can be partitioned into finitely many 
\emph{convex} arcs, i.e., arcs that intersect any $n$-dimensional subspace 
of $\RR^{n+1}$ at most $n$ times (with multiplicities taken into account).

A combinatorial approach to the topology of certain spaces of locally convex curves with fixed endpoints was put forward in the Ph.D. thesis \cite{Goulart, Goulart-Saldanha} of the first author, advised by the second. It relies strongly on the Bruhat stratification of $\Spin_{n+1}$ (particularly Theorem \ref{theo:Bruhat} above) and on several properties of the intersection of its translated cells with each other and with convex arcs. 
Some of these properties are proved in the present paper, e.g.,  
% v3v4: The sentence below is dangerous
% the next result, implying that smooth locally convex curves are everywhere transversal to Bruhat cells. 
the next result, which gives a transversality condition
between smooth locally convex curves and Bruhat cells. 

\goodbreak

\begin{theo}
\label{theo:pathcoordinates}
Consider $z_0 \in \widetilde\B_{n+1}^{+} \subset \Spin_{n+1}$,
% Consider $z_0 = q \acute\sigma \in \widetilde\B_{n+1}^{+} \subset \Spin_{n+1}$,
% $z_0 \in \Bru_{q \acute\sigma}$, 
% $q \in \Quat_{n+1}$,
$\sigma = \sigma_{z_0} \in S_{n+1}$,
$\sigma \ne \eta$, $k = \inv(\eta) - \inv(\sigma) > 0$.
There exist an open neighborhood $\cU_{z_0}$ 
of the non-open signed Bruhat cell $\Bru_{z_0}$ in $\Spin_{n+1}$ 
and a smooth map 
$f = (f_1, \ldots, f_k): \cU_{z_0} \to \RR^k$
% Then there exist an open neighborhood $A \subset \Spin_{n+1}$, $z_0 \in A$,
% and a smooth function $h = (h_1, \ldots, h_k): A \to \RR^k$
with the following properties.
For all $z \in \cU_{z_0}$,
$z \in \Bru_{z_0}$ if and only if
$f(z) = 0$. 
For all $z \in \cU_{z_0}$,
the derivative $Df(z)$ is surjective.
For any smooth locally convex curve
$\Gamma: (-\epsilon,\epsilon) \to \cU_{z_0}$
% with $\Gamma(0) \in  \Bru_{z_Q \acute\sigma}$
we have $(f_k \circ \Gamma)'(t) > 0$ for all $t \in (-\epsilon,\epsilon)$.
\end{theo}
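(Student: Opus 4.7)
My plan is to construct $f$ from the reduced-word parameterization of Corollary~\ref{coro:Bruhat2}, thicken it into a tubular chart around $\Bru_{z_0}$, and then tune the last coordinate so that it is increased by every locally convex curve.

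Fix a reduced word $a_{i_1}\cdots a_{i_m}$ for $\sigma = \sigma_{z_0}$ (with $m=\inv(\sigma)$), signs $\varepsilon_1,\ldots,\varepsilon_m$, and $q\in\Quat_{n+1}$ so that $z_0 = q(\acute a_{i_1})^{\varepsilon_1}\cdots(\acute a_{i_m})^{\varepsilon_m}$. Corollary~\ref{coro:Bruhat2} identifies $\Bru_{z_0}$ with $(0,\pi)^m$ via $\Psi_0(\vec\theta) = q\prod_\ell\alpha_{i_\ell}(\varepsilon_\ell\theta_\ell)$. I choose smoothly varying $\fb_1(\vec\theta),\ldots,\fb_k(\vec\theta)\in\spin_{n+1}$ that, together with $\Psi_0(\vec\theta)^{-1}\partial_{\theta_s}\Psi_0(\vec\theta)$ for $s=1,\ldots,m$, form a basis of $\spin_{n+1}$, and define
\[
\Psi(\vec\theta,\vec\phi) \;=\; \Psi_0(\vec\theta)\,\exp\!\Bigl(\textstyle\sum_{\ell=1}^k \phi_\ell\,\fb_\ell(\vec\theta)\Bigr)
\]
on $(0,\pi)^m \times V$, where $V \subset \RR^k$ is an open neighborhood of $0$. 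By the inverse function theorem, $\Psi$ is a local diffeomorphism along $\{\vec\phi = 0\}$; after shrinking $V$ in a $\vec\theta$-dependent way (via a compact exhaustion of $(0,\pi)^m$), it becomes a diffeomorphism onto an open neighborhood $\cU_{z_0}$ of $\Bru_{z_0}$, in which $\Bru_{z_0}$ is the coordinate slice $\{\vec\phi=0\}$. I set $f_i = \phi_i$ for $i=1,\ldots,k-1$.

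For a smooth locally convex curve $\Gamma$ in $\cU_{z_0}$, writing $\Gamma^{-1}\Gamma' = \sum_j\kappa_j\fa_j$ with $\kappa_j>0$,
\[
(f_k\circ\Gamma)'(t) \;=\; \sum_{j\in\nmesmo}\kappa_j(t)\,Df_k(\Gamma(t))\bigl(\Gamma(t)\fa_j\bigr),
\]
so strict positivity for every such $\Gamma$ is equivalent to the pointwise condition that $Df_k(z)(z\fa_j)\ge 0$ for every $z\in\cU_{z_0}$ and every $j\in\nmesmo$, with strict inequality in at least one $j$ at each $z$. I take $f_k = \phi_k + \sum_{i<k}\mu_i\phi_i$ with constants $\mu_i\in\RR$ tuned so that at $z_0$ the covector $\xi\mapsto Df_k(z_0)(z_0\xi)$ pairs nonnegatively with every $\fa_j$ and strictly positively with their sum. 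This is a finite linear feasibility problem in the $\mu_i$'s; the transformation $(\phi_i)\leftrightarrow(f_i)$ is invertible (unit triangular), so $f^{-1}(0)=\Bru_{z_0}$ and $Df$ remains surjective, and the pointwise positivity then passes from $z_0$ to an entire neighborhood $\cU_{z_0}$ by continuity.

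The main obstacle is establishing the feasibility of this linear programme, namely that the positive cone $\{\sum_j\kappa_j\fa_j:\kappa_j>0\}\subset\spin_{n+1}$ projects, via $v\mapsto(d\phi_\ell(v))_{\ell=1}^k$, into an open half-space of $\RR^k$. This is the geometric ``monotone Bruhat advance'' of locally convex curves across Schubert strata, and I would derive it from Theorem~\ref{theo:Bruhat}: along each path $\theta\mapsto z\alpha_j(\theta)$ the Bruhat order of $z\alpha_j(\theta)$ strictly increases with $\theta\in(0,\pi)$, so every $\fa_j$ pulls back to a vector with a sign-consistent transverse component relative to $\Bru_{z_0}$. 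Extending this from $z_0$ to all of $\Bru_{z_0}$---and by continuity to $\cU_{z_0}$---uses the uniformity of the parameterization $\Psi_0$.
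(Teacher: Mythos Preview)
Your construction has two genuine gaps, and both concern the passage from the single point $z_0$ to the whole neighborhood $\cU_{z_0}$.

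First, the feasibility of your linear programme in the $\mu_i$ is asserted, not proved. Theorem~\ref{theo:Bruhat} tells you that for each $j$ with $\sigma\vartriangleleft\sigma a_j$ the arc $\theta\mapsto z_0\alpha_j(\theta)$ leaves $\Bru_{z_0}$ into the higher cell $\Bru_{z_0\acute a_j}$; it does \emph{not} tell you that these exits all lie in a common half of your arbitrarily chosen normal coordinates $(\phi_1,\dots,\phi_k)$. ``Bruhat order increases'' is an order-theoretic statement, not a statement about signs of components in your frame $\fb_1,\dots,\fb_k$, and the different $j$'s land in different codimension-$(k-1)$ cells. You would need an independent argument to show the projected cone sits in an open half-space.

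Second, and more seriously, even granting feasibility at $z_0$ your continuity argument cannot reach all of $\cU_{z_0}$. The theorem requires $\cU_{z_0}$ to contain the entire non-compact cell $\Bru_{z_0}\cong(0,\pi)^m$, so ``passes from $z_0$ to a neighborhood by continuity'' covers only a small ball, not a tube around the whole cell; your appeal to ``uniformity of $\Psi_0$'' is a placeholder, not an argument. Worse: for each $j$ with $\sigma a_j\vartriangleleft\sigma$ the vector $z\fa_j$ is \emph{tangent} to $\Bru_{z_0}$ at every $z\in\Bru_{z_0}$, so $Df_k(z)(z\fa_j)=0$ exactly. There is no positive margin, and perturbing $z$ off the cell can make this quantity negative, destroying the inequality $(f_k\circ\Gamma)'>0$ for curves whose $\kappa_j$ is large at that index.

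The paper avoids both issues by working in the triangular chart $\cU_{z_0}=z_0\cU_1$ and using the product decomposition $\Lo_{n+1}^1=\Lo_{\sigma^{-1}}\Lo_{\sigma^{-1}\eta}$: the transverse coordinates $f_\ell$ are specific matrix entries of $L_2\in\Lo_{\sigma^{-1}\eta}$, and the last one is chosen so that $f_k(\Gamma(t))=(\Gamma_L(t))_{j+1,j}$ for a particular $j$ determined by $\sigma$ (Remark~\ref{rem:explicitpositivespeed}). By Equation~\ref{equation:explicitGamma} this entry has derivative equal to the positive function $\beta_j(t)$, \emph{identically} on all of $\cU_{z_0}$, so no compactness or continuity argument is needed.
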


\goodbreak

In other words, 
%given $q\in\Quat_{n+1}$ and $\sigma\in S_{n+1}\smallsetminus\{\eta\}$, let $z_0=q\acute\sigma$. 
we introduce slice coordinates 
$(u_1,\ldots,u_{\inv(\sigma)},x_1,\ldots,x_k)$ 
in an open neighborhood $\cU_{z_0}$ of the non-open 
signed Bruhat cell $\Bru_{z_0}$, such that  
$\Bru_{z_0}=\{z\in\cU_{z_0}\,\vert\,x_1=\cdots=x_k=0\}$  
and the coordinate $x_k$ increases along every locally convex curve. 
This explicit construction is used in \cite{Goulart-Saldanha} to 
describe certain (infinite-dimensional) collared topological manifolds 
of locally convex curves crossing $\Bru_{z_0}$.

Set $\grave a_j = (\acute a_j)^{-1}$.
For a reduced word 
$\sigma=a_{i_1}\cdots a_{i_k}$,
set, as in Section \ref{sect:sign},
\begin{equation}
\label{equation:acutegrave}
\begin{gathered}
\acute \sigma= \longacute(\sigma) = \acute a_{i_1}\cdots \acute a_{i_k},
\quad
%\in\widetilde\B^+_{n+1}, \quad
\grave \sigma= \longgrave(\sigma) =
%(\acute a_{i_1})^{-1}\cdots (\acute a_{i_k})^{-1}\in\widetilde\B^+_{n+1}, 
\grave a_{i_1}\cdots \grave a_{i_k}\in\widetilde\B^+_{n+1}, \\
\hat \sigma=\longhat(\sigma) = \acute\sigma(\grave\sigma)^{-1}\in\Quat_{n+1}.
\end{gathered}
\end{equation}
% (\longacute(\sigma^{-1}))^{-1} =
% also, $\grave a_i = (\acute a_i)^{-1}$.
The maps $\chop, \adv: \Spin_{n+1} \to \acute\eta \Quat_{n+1} \subset
\widetilde \B_{n+1}^{+}$
are defined by 
\begin{equation}
\label{equation:chopadvance}
\adv(z)=q_a \acute\eta, \quad
\chop(z) = q_c \grave\eta, \quad
z\in\Bru_{z_0}\subset\Bru_{\sigma_0}, \quad 
z_0=q_a \acute\sigma_0=q_c \grave\sigma_0,
\end{equation}
where, of course, $\sigma_0=\sigma_{z_0} \in S_{n+1}$ and $q_a,q_c\in\Quat_{n+1}$.  
%$\acute\eta=\acute a_1 \acute a_2 \acute a_1 \acute a_3 \acute a_2 \acute a_1 
%\cdots \acute a_n \acute a_{n-1}\cdots \acute a_2 \acute a_1
%\in\widetilde\B^+_{n+1}$.
%where $z\in\Bru_{z_0}\subset\Bru_{\sigma_0}$ 
%and $\eta\sigma_0=\sigma_1$.
For  $\rho_0=\eta\sigma_0$, we have
$\adv(z)=z_0 \longacute(\rho_0^{-1})=z_0 (\grave\rho_0)^{-1}$ 
and $\chop(z)\acute\rho_0=z_0$. 
In particular, $\adv(z)=\chop(z)\hat\rho_0$.

\begin{theo}
\label{theo:chopadvance}
For $z \in \Spin_{n+1}$,
let $\Gamma: (-\epsilon, \epsilon) \to \Spin_{n+1}$
be a locally convex curve such that $\Gamma(0) = z$.
There exists $\epsilon_a \in (0,\epsilon)$ such that
for all $t \in (0,\epsilon_a]$, $\Gamma(t) \in \Bru_{\adv(z)}$.
There exists $\epsilon_c \in (0,\epsilon)$ such that
for all $t \in [-\epsilon_c,0)$, $\Gamma(t) \in \Bru_{\chop(z)}$.
%Given $z \in \Spin_{n+1}$, there exists $\epsilon
%take $z_0 \in \widetilde B_{n+1}^{+}$ such that $z \in \Bru_{z_0}$.
%Consider $\sigma_0 = \Pi(z_0) \in S_{n+1}$ and $\sigma_1 = \eta\sigma_0$. Then $$ \chop(z) \longacute(\sigma_1) = z_0, \qquad
%z_0 \longacute(\sigma_1^{-1}) = \adv(z). $$
\end{theo}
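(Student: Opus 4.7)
I prove the $\adv(z)$ statement; the $\chop(z)$ statement follows from the same factorization, applied to $t<0$ in place of $t>0$. By the free transitive left action of $\Quat_{n+1}$ on the connected components of each unsigned Bruhat cell, multiplying $\Gamma$ on the left by $q_a^{-1}$ reduces to the case $z_0 = \acute\sigma_0$, so $\adv(z) = \acute\eta$ and $\chop(z) = \acute\sigma_0 (\acute\rho_0)^{-1}$ (with $\rho_0 = \eta\sigma_0$). I argue by induction on $k = \inv(\eta) - \inv(\sigma_0) \geq 0$. The base case $k = 0$ is immediate: $\Bru_{\acute\eta}$ is an open cell containing $z = z_0$, so continuity of $\Gamma$ gives $\Gamma(t) \in \Bru_{\acute\eta}$ for $|t|$ small.

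For $k \geq 1$, fix a reduced word $a_{j_1}\cdots a_{j_k}$ for $\rho_0^{-1} = \sigma_0^{-1}\eta$, producing the length-increasing chain $\sigma_0 < \sigma_0 a_{j_1} < \cdots < \sigma_0 a_{j_1}\cdots a_{j_k} = \eta$. Iterating Theorem \ref{theo:Bruhat} with $\varepsilon_i = +1$ yields a diffeomorphism
\[
\Phi: \Bru_{\acute\sigma_0}\times (0,\pi)^k \to \Bru_{\acute\eta}, \qquad \Phi(w, \theta_1, \ldots, \theta_k) = w\,\alpha_{j_1}(\theta_1)\cdots\alpha_{j_k}(\theta_k);
\]
the analogous iteration with $\varepsilon_i = -1$ gives a diffeomorphism $\Bru_{\acute\sigma_0}\times(0,\pi)^k \to \Bru_{\chop(z)}$, given by the same formula with each $\theta_i$ replaced by $-\theta_i$. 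I extend $\Phi$ smoothly to $F: \Bru_{\acute\sigma_0}\times \RR^k \to \Spin_{n+1}$ by the same formula. The plan is to produce smooth $w(t) \in \Bru_{\acute\sigma_0}$ and $\theta_i(t) \in \RR$ with $w(0) = z$, $\theta_i(0) = 0$, $F(w(t), \theta(t)) = \Gamma(t)$, and $\theta_i(t) > 0$ for $t > 0$ small, $\theta_i(t) < 0$ for $t < 0$ small. Such a factorization immediately yields $\Gamma(t) \in \Bru_{\adv(z)}$ for $t \in (0, \epsilon_a]$ and $\Gamma(t) \in \Bru_{\chop(z)}$ for $t \in [-\epsilon_c, 0)$.

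The main obstacle is establishing the signed positivity of the $\theta_i$'s. The inverse function theorem does not apply directly, since $DF(z, 0)$ may be singular --- already for $n+1 = 3$, $\sigma_0 = e$, and the reduced word $a_1 a_2 a_1$ of $\eta$, the $\theta$-derivatives at the origin are $\fa_1, \fa_2, \fa_1$, which are linearly dependent. The resolution is an order-by-order Taylor matching: write $\theta_i(t) = c_{i,1}t + c_{i,2}t^2 + \cdots$, use $\Gamma'(0) = z\sum_\ell\kappa_\ell(0)\fa_\ell$ with all $\kappa_\ell(0) > 0$, and solve iteratively. In the running example one obtains $\theta_1(t) = \theta_3(t) = \frac{\kappa_1(0)}{2}\,t + O(t^2)$ and $\theta_2(t) = \kappa_2(0)\,t + O(t^2)$, each with positive linear leading term. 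The key technical step, which I expect to be the hardest, is to prove in general that every $\theta_i$ admits an expansion whose leading coefficient is odd in $t$ and positive, securing both sign conditions simultaneously. This positivity propagates from that of the curvatures $\kappa_\ell(0)$ through the combinatorics of the Coxeter relations for the $\fa_j$'s, in the spirit of the positivity results of \cite{Berenstein-Fomin-Zelevinsky}.
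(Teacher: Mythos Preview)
Your proposal has a genuine gap at precisely the point you flag as ``the hardest'': you do not prove that the functions $\theta_i(t)$ exist with the required sign, and your Taylor-matching sketch does not close this. Since $DF(z,0)$ is singular, there is no canonical factorization $\Gamma(t)=F(w(t),\theta(t))$ near $t=0$; different choices of $w(t)$ lead to different $\theta(t)$, and you give no mechanism for selecting one that makes every $\theta_i$ positive for $t>0$. Your example computes only leading terms and already relies on a symmetric splitting $\theta_1=\theta_3=\tfrac{\kappa_1(0)}{2}t+\cdots$ that is not forced by any equation you wrote down. The vague appeal to ``positivity propagates \ldots\ in the spirit of \cite{Berenstein-Fomin-Zelevinsky}'' is not an argument. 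Also note that locally convex curves here are merely absolutely continuous, so a Taylor-expansion approach would at best prove a weaker statement.

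The paper's proof avoids all of this by changing coordinates rather than attempting a factorization in $\Spin_{n+1}$. After a projective transformation (which preserves Bruhat cells and local convexity), one may assume $z=q_a\,\bQ(L_0)$ with $L_0\in\Pos_{\sigma_0}$. Triangular coordinates then give $\Gamma(t)=q_a\,\bQ(\Gamma_L(t))$ with $\Gamma_L$ a convex curve in $\Lo_{n+1}^1$ and $\Gamma_L(0)=L_0$. Lemma~\ref{lemma:transition} (via Lemma~\ref{lemma:totallypositive} and the semigroup property of $\overline{\Pos_\eta}$) shows that $\Gamma_L(t)\in\Pos_\eta$ for every $t>0$ near $0$; Lemma~\ref{lemma:posbruhat} then gives $\Gamma(t)\in q_a\Bru_{\acute\eta}=\Bru_{\adv(z)}$. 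The positivity you were hoping to extract from Coxeter combinatorics is exactly the total positivity encoded in $\Pos_\eta$, but in triangular coordinates it is available as a theorem rather than a hope.
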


The chopping map was introduced in \cite{Saldanha-Shapiro}, 
where a different combinatorial description is given,
with an emphasis on $\SO_{n+1}$.
Also, the topological claim of the theorem was 
proved for smooth locally convex curves. 
The notations $\ba = \grave\eta = \chop(1)$ 
and $A=\Pi(\ba)$ are used there; 
$A$ is called the \emph{Arnold matrix}.

Given a locally convex curve $\Gamma:J\to\Spin_{n+1}$, let 
$m_j(t)=m_{\Gamma;j}(t)$ be the determinant of the 
southwest $j\times j$ block of $\Pi(\Gamma(t))$, so that 
$m_j(t)$ is a minor of $\Pi(\Gamma(t))$. 
Given a permutation $\sigma\in S_{n+1}$ and $j\in\nmesmo$, 
we define the \emph{multiplicity}
$\mult_j(\sigma)=1^\sigma+\cdots+j^\sigma-(1+\cdots+j)$. 
Another important result is the following.

\begin{theo}
\label{theo:mult}
Let $\Gamma:J\to\Spin_{n+1}$ be a smooth locally convex curve. 
%Let $m_j(t)=m_{j,\Gamma}(t)$ 
%=\det\left(\operatorname{Submatrix}(\Pi(\Gamma(t)),\{n-j+2,\ldots,n+1\},\{1,\ldots,j\})\right), $$
%be the $j\times j$ southwest minor of the matrix $\Pi(\Gamma(t))$.
Consider $t_0 \in J$ and $\sigma\in S_{n+1}$. 
We have $\Gamma(t_0) \in \Bru_{\eta\sigma}$ if and only if, 
for all $j\in\nmesmo$, 
$t=t_0$ is a zero of $m_j(t)$ of multiplicity 
$\mult_j(\sigma)$.   
\end{theo}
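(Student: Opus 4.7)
The plan is to reduce $m_j(t)$, via the Bruhat decomposition of $\Pi(\Gamma(t_0))$, to a scalar Wronskian of explicit functions whose vanishing orders can be read off directly. The conceptual core is identifying the reduced minor with a Wronskian; once that is in place, the rest combines a brief ODE argument with the classical Wronskian-order formula.

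First I set $M(t) = \Pi(\Gamma(t))$ and factor $M(t) = M(t_0)N(t)$, so $N(t_0) = I$ and $N^{-1}N' = \sum_i \kappa_i(t)\fa_i$. The hypothesis $\Gamma(t_0) \in \Bru_{\eta\sigma}$ provides a $UPU$ decomposition $M(t_0) = U_1 P_0 U_2$ with $\sigma_{P_0} = \eta\sigma$. Because $U_1$ is upper triangular, its last $j$ rows act nontrivially only on the last $j$ rows of the subsequent product; because $P_0$ permutes rows according to $\eta\sigma$, using $(\eta\sigma)(i) = (n+2-i)^\sigma$ one finds that $m_j(t)$ equals, up to a nonzero constant, the $j\times j$ minor of $V(t) = U_2 N(t)$ with rows $I_j = \sigma(\{1,\dots,j\})$ and columns $\{1,\dots,j\}$. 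Write $I_j = \{i_1 < \cdots < i_j\}$.

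Next I identify this restricted minor with a scalar Wronskian. The curve $V$ satisfies $V^{-1}V' = \sum \kappa_i \fa_i$ with $V(t_0) = U_2$, and the recursion $V'_{*,k} = \kappa_k V_{*,k+1} - \kappa_{k-1} V_{*,k-1}$ (with $\kappa_0 = \kappa_{n+1} = 0$) shows that $V_{*,1},\dots,V_{*,j}$ and the iterated derivatives $w, w', \dots, w^{(j-1)}$ of $w(t) = V(t)e_1$ span the same $j$-plane at every $t$, related by a triangular change of basis of smooth, everywhere-positive determinant (namely $\prod_{l=1}^{j-1}\kappa_l^{\,j-l}$). Therefore
\[
m_j(t) \;=\; c_0(t)\cdot W(w_{i_1},\dots,w_{i_j})(t),
\]
with $c_0$ smooth and nowhere vanishing, where $W$ denotes the scalar Wronskian. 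A short induction using the same recursion and $V(t_0) = U_2$ shows that $w^{(k)}(t_0)$ is a linear combination of $U_2 e_1,\dots,U_2 e_{k+1}$ in which the coefficient of $U_2 e_{k+1}$ equals $\prod_{l=1}^{k}\kappa_l(t_0) > 0$; since $U_2$ is invertible upper triangular, this forces the scalar $w_i(t)$ to vanish to order \emph{exactly} $i - 1$ at $t_0$, for each $i \in \nmaisum$.

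To close, I invoke the classical Wronskian-order formula (a Vandermonde computation on the leading monomials): for smooth functions with distinct vanishing orders $d_1 < \cdots < d_j$ and nonzero leading coefficients, the Wronskian vanishes to order $\sum_k d_k - \binom{j}{2}$ at $t_0$. With $d_k = i_k - 1$ this yields
\[
\operatorname{ord}_{t_0} m_j \;=\; \sum_{i\in I_j} i \;-\; \tfrac{j(j+1)}{2} \;=\; \sum_{i=1}^{j} i^\sigma - \tfrac{j(j+1)}{2} \;=\; \mult_j(\sigma),
\]
proving the forward implication. The converse is immediate: every $\Gamma(t_0)$ lies in a unique $\Bru_{\eta\sigma'}$, so by the forward direction its minor orders are $(\mult_j(\sigma'))_{j\in\nmesmo}$; since the map $\sigma' \mapsto (\mult_j(\sigma'))_{j\in\nmesmo}$ is injective (consecutive differences recover $j^{\sigma'}$), matching orders force $\sigma' = \sigma$. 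The main obstacle is the identification step: both the reduction of $m_j$ to a scalar Wronskian and the exact determination of the leading term of each $w_i$ rely on combining the locally convex ODE with the upper-triangular structure at $t_0$; the Wronskian bookkeeping is then classical.
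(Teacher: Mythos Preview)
Your proof is correct. Both your approach and the paper's reduce to the same Vandermonde determinant at the end (the paper isolates it as Lemma~\ref{lemma:vandert}), but the routes there differ. The paper first applies a projective transformation to move $\Gamma(t_0)$ into $\widetilde\B^{+}_{n+1}$, then works in the generalized triangular coordinates $\Gamma_L(t)=z_0\,\bL(z_0^{-1}\Gamma(t))$ and analyzes the entries of $\Gamma_L$ directly: each $(\Gamma_L)_{i,j}$ is $t^{\,i^{\eta\sigma}-j}$ times a unit, so the southwest $k\times k$ minor factors as $t^{\mult_k(\sigma)}$ times a determinant that at $t=0$ is exactly the Vandermonde-type matrix of Lemma~\ref{lemma:vandert}. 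You instead keep the raw $UPU$ decomposition of $M(t_0)$, reduce $m_j$ to a minor of $V=U_2N$ with prescribed rows $I_j=\sigma(\{1,\dots,j\})$, and then recognize that minor as the scalar Wronskian $W(w_{i_1},\dots,w_{i_j})$ via the Frenet-type recursion $V'_{*,k}=\kappa_kV_{*,k+1}-\kappa_{k-1}V_{*,k-1}$. Your route bypasses the triangular-coordinate machinery of Sections~\ref{sect:triangle}--\ref{sect:totallypositive} and makes the mechanism transparent: the upper-triangular shape of $U_2$ together with the ODE forces $w_i$ to vanish to order exactly $i-1$, and the classical Wronskian-order formula does the rest. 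The paper's route, on the other hand, fits into the framework used throughout the article and yields the entry-by-entry asymptotics explicitly, which is what allows the sharper $C^r$ statement in Lemma~\ref{lemma:xmult}. Your handling of the converse via injectivity of $\sigma\mapsto\mult(\sigma)$ is also how the paper views it (see Section~\ref{sect:symmetric}).
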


In the statement above we adopt the convention that a ``zero" of multiplicity zero is no zero at all, i.e., it is a value $t=t_0$ in the domain of a function $f(t)$ such that $f(t_0)\neq 0$.

%\subsection*{Outline of this work}
In Section \ref{sect:symmetric} we review some basics of the symmetric group. 
%We also introduce the \emph{multiplicities} $\mult_j(\sigma)$ of a permutation $\sigma\in S_{n+1}$. 
In Section \ref{sect:sign}, 
we study the group $\widetilde\B^{+}_{n+1}$. 
We are particularly interested in the maps 
\[\longacute, \longgrave:S_{n+1}\to\widetilde\B^{+}_{n+1}, \qquad\longhat:S_{n+1}\to\Quat_{n+1}\subset\widetilde\B^+_{n+1},\]
defined by Equation \ref{equation:acutegrave}.  
In Section \ref{sect:triangle} we introduce 
triangular systems of coordinates in large open subsets 
$\cU_{z_0}$ of the group $\Spin_{n+1}$ and study 
the so called \emph{convex curves} 
in the nilpotent lower triangular group $\Lo^{1}_{n+1}$.
In Section \ref{sect:totallypositive} we recall the concept 
of totally positive matrices. More generally, we define the subsets 
$\Pos_\sigma, \Neg_\sigma\subset\Lo^{1}_{n+1}$ 
for $\sigma\in S_{n+1}$. 
%We review some classical results and prove some useful facts.
In Section \ref{sect:bruhatcell} we prove Theorems \ref{theo:Bruhat}, 
\ref{theo:pathcoordinates} and \ref{theo:chopadvance} 
and related results.  
Section \ref{sect:mult} contains the proof of Theorem \ref{theo:mult}. 
Section \ref{sect:finalremarkso} mentions applications of the results of 
the present paper in \cite{Goulart-Saldanha, Saldanha-Shapiro-Shapiro} 
and work in progress.

This paper contains follow-up material inspired
by the Ph. D. thesis of the first author,
advised by the second author
and co-advised by Boris Khesin, University of Toronto.
Both authors would like to thank:  
Em\'ilia Alves, 
Boris Khesin, 
Ricardo Leite, 
Carlos Gustavo Moreira, 
Paul Schweitzer, 
Boris Shapiro, 
Michael Shapiro, 
Carlos Tomei, 
David Torres, 
Cong Zhou 
and 
Pedro Z\"{u}lkhe 
for helpful conversations and 
the referee for a careful report.
We also thank
the University of Toronto and the University of Stockholm 
for the hospitality during visits.  

Both authors thank CAPES, CNPq and FAPERJ (Brazil) for financial support.
More specifically, the first author benefited from
CAPES-PDSE grant 99999.014505/2013-04 
during his Ph. D. and also 
CAPES-PNPD post-doc grant 88882.315311/2019-01.

\section{The symmetric group}
\label{sect:symmetric}

%Consider the symmetric group $S_{n+1}$
%(acting on $\nmaisum = \{1, 2, \ldots, n+1 \}$)
%as the Coxeter-Weyl group $A_n$, i.e.,
%use the $n$ generators
%$a = a_1 = (12)$, $b = a_2 = (23)$, \dots, $a_n = (n,n+1)$.
%For $\sigma \in S_{n+1}$ and $k \in \nmaisum$
%we use the notation $k^\sigma$ (rather than $\sigma(k)$)
%so that $(k^{\sigma_1})^{\sigma_2} = k^{(\sigma_1\sigma_2)}$.
%A permutation can be denoted in many ways:
%one common notation is 

Two usual notations for a permutation $\sigma\in S_{n+1}$ are: 
as a product of Coxeter generators 
$a_1 = (1, 2),\ldots, a_i = (i, i+1), \ldots a_n = (n,n+1)$; 
as a list of values 
$[1^\sigma\,2^\sigma\cdots n^\sigma\,(n+1)^\sigma]$,   
the so called \emph{complete notation}. 
For $n\leq4$, we write $a=a_1$, $b=a_2$, $c=a_3$, $d=a_4$.
For instance, $ab=a_1a_2 = [312] \in S_3$. 

%Notice that we often enclose between brackets an expression 
%of a permutation in terms of Coxeter generators.
%We adopt this device to avoid confusion 
%between the single permutation with that expression and 
%a string of generators with no product intended. 
%Strings of permutations are to appear prominently later in this paper, 
%encoding the so called \emph{itinerary} of a nondegenerate
%curve. 

For $\sigma \in S_{n+1}$, let $P_{\sigma}$ be 
the permutation matrix defined by 
$e_k^\transpose P_{\sigma} = e_{k^\sigma}^\transpose$;
for instance, for $n = 2$ we have:
\[ P_a=P_{a_1} = \begin{pmatrix} 0 & 1 & 0 \\ 1 & 0 & 0 \\ 0 & 0 & 1 \end{pmatrix},
\quad
P_b=P_{a_2} = \begin{pmatrix} 1 & 0 & 0 \\ 0 & 0 & 1 \\ 0 & 1 & 0 \end{pmatrix}, \]
\[ P_{ab}=P_{a_1a_2} = P_{a_1} P_{a_2} =
\begin{pmatrix} 0 & 0 & 1 \\ 1 & 0 & 0 \\ 0 & 1 & 0 \end{pmatrix},
\quad
P_{ba}=P_{a_2a_1} = P_{a_2} P_{a_1} =
\begin{pmatrix} 0 & 1 & 0 \\ 0 & 0 & 1 \\ 1 & 0 & 0 \end{pmatrix}. \]
For $\sigma \in S_{n+1}$, let %$\inv(\sigma)$ 
%be the length of $\sigma$
%with the generators $a_i$, $1 \le i \le n$
%(we reserve the symbol $\ell$ for lengths of the itineraries mentioned above).
%Equivalently,
$\inv(\sigma) = |\Inv(\sigma)|$ be the number of inversions of $\sigma$;
the set of inversions is
$ \Inv(\sigma) = 
\{(i,j)\in\nmaisum^2\,\vert\,(i<j)\land(i^\sigma>j^\sigma)\}$. 
Recall that $\inv(\sigma)$ is also the length of a reduced word for 
$\sigma$ in terms of the generators $a_1,\ldots,a_n$. 
There exists a unique $\eta \in S_{n+1}$
with $\inv(\eta) = m = n(n+1)/2$, the Coxeter element
(a more common symbol for $\eta$ in the literature is $w_0$);
%: we spare the letter $w$ for itineraries, as mentioned above);
we have 
\[
\eta=a_1 a_2 a_1 a_3 a_2 a_1 \cdots a_n a_{n-1}\cdots a_2 a_1, \qquad  
P_{\eta} =
\begin{pmatrix} & & 1 \\ & \iddots & \\ 1 & & \end{pmatrix}. \]
A set $I\subseteq\{(i,j)\in \nmaisum^2\;\vert\;i<j\}$ 
is the set of inversions of a permutation $\sigma\in S_{n+1}$ 
if and only if for all $i,j,k\in\nmaisum$ with $i<j<k$,
% we have 
the following two statements hold:
\begin{enumerate}
\item{if $(i,j),(j,k)\in I$ then $(i,k)\in I$;}
\item{if $(i,j),(j,k)\notin I$ then $(i,k)\notin I$.}
\end{enumerate}
Also, if $\rho = \sigma\eta$ then 
$\Inv(\sigma)\sqcup \Inv(\rho) = \Inv(\eta)$.

Let $\Up_{n+1}^{1}, \Lo_{n+1}^{1}$ % \subset \GL_{n+1}$
be the nilpotent triangular groups 
of real upper and lower triangular matrices
with all diagonal entries equal to $1$.
For $\sigma \in S_{n+1}$, consider the subgroups
\begin{equation}
\label{equation:Upsigma}
\begin{aligned}
\Up_\sigma &= \Up_{n+1}^1 \cap (P_\sigma \Lo_{n+1}^{1} P_\sigma^{-1}) \\
& = \{ U \in \Up_{n+1}^1 \;|\; \forall i, j \in \nmaisum,
((i < j, U_{ij} \ne 0) \to ((i,j) \in \Inv(\sigma))\}, \\
\Lo_\sigma &= \Lo_{n+1}^1 \cap (P_\sigma \Up_{n+1}^{1} P_\sigma^{-1}) 
= (\Up_\sigma)^\transpose = P_\sigma \Up_{\sigma^{-1}} P_\sigma^{-1}, 
\end{aligned}
\end{equation}
affine subspaces of dimension $\inv(\sigma)$.
If $\rho = \sigma\eta$ then 
any $L \in \Lo_{n+1}^{1}$ can be written uniquely as
$L = L_1L_2$, $L_1 \in \Lo_\sigma$, $L_2 \in \Lo_\rho$.

As stated in the introduction, a \emph{reduced word} for $\sigma$ is an identity
\[ \sigma = a_{i_1}a_{i_2}\cdots a_{i_k}, \quad k = \inv(\sigma), \]
or, more formally, it is a finite sequence of indices
$(i_1,i_2,\ldots,i_k) \in \nmesmo^k$ %(where $\nmesmo = \{1, 2, \ldots, n\}$)
satisfying the identity above.
Two reduced words for the same permutation $\sigma$
are connected by a finite sequence of local moves of two kinds:
\begin{gather}
\label{equation:reducedword1}
(\cdots,i,j,\cdots) \leftrightarrow (\cdots,j,i,\cdots), \quad
% \sigma = \sigma_0 a_i a_j \sigma_1 = \sigma_0 a_j a_i \sigma_1,
\quad |i-j|\ne 1; \\
\label{equation:reducedword2}
(\cdots,i,i+1,i,\cdots) \leftrightarrow (\cdots,i+1,i,i+1,\cdots); \quad
% \sigma = \sigma_0 a_i a_{i+1} a_i \sigma_1 = \sigma_0 a_{i+1} a_i a_{i+1} \sigma_1. 
\end{gather}
corresponding to the identities $a_ia_j = a_ja_i$ for $|i-j| \ne 1$
and $a_i a_{i+1} a_i =  a_{i+1} a_i a_{i+1}$, respectively
(see \cite{Elnitsky, Humphreys}).

The (strong) Bruhat order $<$ defined in the introduction 
%in the symmetric group $S_{n+1}$
can also be defined as the transitive closure of 
a relation $\vartriangleleft$ defined in $S_{n+1}$ 
as follows: 
%$\sigma_0 \le \sigma_1$ if
%some reduced word for $\sigma_0$ %(in the generators $a_k$)
%is a substring of some reduced word for $\sigma_1$ 
%(a substring here need not have consecutive letters).
write $\sigma_0 \vartriangleleft \sigma_1$
if $\inv(\sigma_1) = \inv(\sigma_0) + 1$ and
$\sigma_1 = \sigma_0 (j_0j_1) = (i_0i_1) \sigma_0$;
here $i_0 < i_1$, $j_0 < j_1$,
$i_0^{\sigma_0} = j_0$, $i_1^{\sigma_0} = j_1$,
$i_0^{\sigma_1} = j_1$, $i_1^{\sigma_1} = j_0$.
%Since $\inv$ is the rank function of the Bruhat order, 
We have $\sigma_0 \vartriangleleft \sigma_1$ if and only if 
$\sigma_0$ is an immediate predecessor of $\sigma_1$ 
in the Bruhat order.
We have $\sigma_0 < \sigma_k$ 
(with $k = \inv(\sigma_k) - \inv(\sigma_0)$)
if and only if there exist $\sigma_1, \ldots, \sigma_{k-1}$ with
$\sigma_0 \vartriangleleft \sigma_1 \vartriangleleft \cdots
\vartriangleleft \sigma_{k-1} \vartriangleleft \sigma_k.$

If $\sigma_1$ is written as $[1^{\sigma_1}\cdots(n+1)^{\sigma_1}]$,
it is easy to find its immediate predecessors:
look for integers $j_1 > j_0$ appearing in the list 
$[1^{\sigma_1}\cdots(n+1)^{\sigma_1}]$, 
$j_1$ to the left of $j_0$,
such that the integers which appear in the list between $j_1$ and $j_0$
are either larger than $j_1$ or smaller than $j_0$;
the permutation $\sigma_0 \vartriangleleft \sigma_1$ is then obtained
by switching the entries $j_1$ and $j_0$.
In the matrix $P_{\sigma_1}$, we must look for positive entries
$(i_0,j_1)$, $(i_1,j_0)$ such that the interior of the rectangle
with these vertices includes no positive entry. Then 
$P_{\sigma_0}$ is obtained by flipping these entries 
to the other corners of the rectangle while leaving the complement 
of the rectangle unchanged.

The strong Bruhat order must not be confused with the left and right
weak Bruhat orders.
The weak left Bruhat order $<_L$ 
is the transitive closure of the relation $\vartriangleleft_L$ 
defined as follows:
$\sigma_1 \vartriangleleft_{L} \sigma_0$ if $\sigma_1 \vartriangleleft \sigma_0$
and $\sigma_0 = a_i \sigma_1$ (for some $i$).
Equivalently, $\sigma_1 \le_L \sigma_0$ if
$\Inv(\sigma_1^{-1}) \subseteq \Inv(\sigma_0^{-1})$.
Similarly, 
$\sigma_1 \vartriangleleft_{R} \sigma_0$ if $\sigma_1 \vartriangleleft \sigma_0$
and $\sigma_0 = \sigma_1 a_j$ (for some $j$);
the transitive closure $\sigma_1 \le_R \sigma_0$
is characterized by $\Inv(\sigma_1) \subseteq \Inv(\sigma_0)$.
Notice that either $\sigma_1 \vartriangleleft_L \sigma_0$
or $\sigma_1 \vartriangleleft_R \sigma_0$
imply  $\sigma_1 \vartriangleleft \sigma_0$;
on the other hand,
$\sigma_1 = [2143] = a_1a_3 \vartriangleleft \sigma_0 = [4123] = a_1a_2a_3$,
but $\sigma_1 \not\le_L \sigma_0$ and $\sigma_1 \not\le_R \sigma_0$.
For more on Coxeter groups and Bruhat orders, see \cite{Bjorner-Brenti, Humphreys}.

%If $\sigma_0 \vartriangleleft \sigma_1$ and $P_{\sigma_i}$ are the corresponding permutation matrices then the two matrices differ only in the four entries $(i_\ast,j_\ast)$; furthermore, the interior of the rectangular box with these four corners includes no non-zero entry of either matrix.

\begin{lemma}
\label{lemma:aij}
Consider $\sigma \in S_{n+1}$ and $i, j \in \nmesmo$
such that $|i-j| > 1$.
Then $\sigma \vartriangleleft \sigma a_i$
if and only if
$\sigma a_j \vartriangleleft \sigma a_j a_i = \sigma a_i a_j$.
% Similarly, $\sigma \vartriangleleft \sigma a_j$ if and only if
% $\sigma a_i \vartriangleleft \sigma a_i a_j$.
\end{lemma}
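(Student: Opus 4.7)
The plan is to reduce both sides of the equivalence to a single combinatorial condition on the complete notation of $\sigma$. The second equality $\sigma a_j a_i = \sigma a_i a_j$ is immediate from the commutation $a_ia_j = a_ja_i$, valid whenever $|i-j|>1$, so only the equivalence of the two cover relations requires attention.

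First I would establish the criterion that $\sigma \vartriangleleft \sigma a_i$ if and only if $i^{\sigma^{-1}} < (i+1)^{\sigma^{-1}}$, i.e.\ the value $i$ appears to the left of the value $i+1$ in the list $[1^\sigma\,2^\sigma\cdots(n+1)^\sigma]$. This is a direct application of the description of immediate predecessors recalled just before the lemma: taking $\sigma_1 = \sigma a_i$, $\sigma_0 = \sigma$, one has $(j_0j_1)=(i,i+1)$, so $j_0=i$ and $j_1=i+1$, and the side condition that all entries lying between positions of $j_1$ and $j_0$ in the list for $\sigma_1$ be either larger than $j_1$ or smaller than $j_0$ is vacuously true, since no integer lies strictly between $i$ and $i+1$. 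Only the order of the two positions $i_0=i^{\sigma^{-1}}$ and $i_1=(i+1)^{\sigma^{-1}}$ is left to check.

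Applying the same criterion to $\tau = \sigma a_j$, the relation $\sigma a_j \vartriangleleft \sigma a_j a_i$ holds iff $i^{\tau^{-1}} < (i+1)^{\tau^{-1}}$. Right-multiplication by $a_j$ sends each entry $k^\sigma$ of the complete notation to $(k^\sigma)^{a_j}$, so it exchanges the positions of the values $j$ and $j+1$ in the list and fixes every other value. The hypothesis $|i-j|>1$ forces $\{i,i+1\}\cap\{j,j+1\}=\emptyset$, so the positions of $i$ and $i+1$ are unchanged: $i^{\tau^{-1}} = i^{\sigma^{-1}}$ and $(i+1)^{\tau^{-1}} = (i+1)^{\sigma^{-1}}$. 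The two criteria thus coincide, yielding the claimed equivalence. No serious obstacle arises; the lemma reduces, via the disjointness forced by $|i-j|>1$, to the observation that the two cover relations are controlled by the same pair of entries of the complete notation of $\sigma$.
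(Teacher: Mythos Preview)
Your proposal is correct and follows the same strategy as the paper: reduce both cover relations to a single positional condition that is unaffected by right multiplication by the commuting generator $a_j$. In fact your criterion $i^{\sigma^{-1}} < (i+1)^{\sigma^{-1}}$ is the correct one for the lemma as stated (right multiplication by $a_i$), whereas the paper's printed criterion $i^\sigma < (i+1)^\sigma$ and the claim $i^\sigma = i^{\sigma a_j}$ really pertain to the left-multiplication analogue $\sigma \vartriangleleft a_i\sigma$; so your write-up is, if anything, the more accurate of the two.
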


\begin{proof}
The condition 
$\sigma \vartriangleleft \sigma a_i$
is equivalent to $i^\sigma < (i+1)^\sigma$.
But $i^\sigma = i^{(\sigma a_j)}$
and $(i+1)^\sigma = (i+1)^{(\sigma a_j)}$,
proving the desired equivalence.
% The second one is similar.
\end{proof}

Define
\[ \sigma_0 \vee e = \sigma_0, \qquad \sigma_0 \vee a_i = \begin{cases}
\sigma_0, & \text{if } \sigma_0 a_i \vartriangleleft \sigma_0; \\
\sigma_0 a_i, & \text{if } \sigma_0 \vartriangleleft \sigma_0 a_i. \end{cases} \]
A simple computation verifies that
\begin{gather*}
|i-j| \ne 1 \quad\implies\quad
(\sigma_0 \vee a_i) \vee a_j = (\sigma_0 \vee a_j) \vee a_i; \\
((\sigma_0 \vee a_i) \vee a_{i+1}) \vee a_i =
((\sigma_0 \vee a_{i+1}) \vee a_i) \vee a_{i+1}. 
\end{gather*}
We may therefore recursively define
\[ \sigma_1 \vartriangleleft \sigma_1 a_i \quad\implies\quad
\sigma_0 \vee (\sigma_1 a_i) = (\sigma_0 \vee \sigma_1) \vee a_i; \]
the previous remarks, together with the connectivity of reduced words
under the moves in Equations \ref{equation:reducedword1} and \ref{equation:reducedword2},
show that this is well defined.
Equivalently, $\sigma_0 \vee \sigma_1$ is the smallest $\sigma$
(in the strong Bruhat order) satisfying both
$\sigma_0 \le_R \sigma$ and $\sigma_1 \le_L \sigma$.
Notice that $S_{n+1}$ is not a lattice with the strong Bruhat order;
the $\vee$ operation above uses more than one partial order.
In general, we may have $\sigma_0 \vee \sigma_1 \ne \sigma_1 \vee \sigma_0$
and $\sigma_0 \vee \sigma_0 \ne \sigma_0$.
We do have associativity:
$(\sigma_0 \vee \sigma_1) \vee \sigma_2 =
\sigma_0 \vee (\sigma_1 \vee \sigma_2)$.

\begin{example}
\label{example:vee}
Take $n = 3$, $\sigma_0 = [2413]$,
$\sigma_1 = [2431] = cbca$.
We then have $\sigma_0\vee\sigma_1 =
(((\sigma_0 \vee c) \vee b) \vee c) \vee a =
((\sigma_0 \vee b) \vee c) \vee a =
(\sigma_0 b \vee c) \vee a =
\sigma_0 bc \vee a = \sigma_0 bca = \eta$.
\end{example}

Another useful representation of a permutation is in terms of its
\emph{multiplicities}, which we now define.
For $\sigma \in S_{n+1}$ and $k\in\nmesmo$, let 
\[ \mult_k(\sigma) = \sum_{j\in\llbracket k\rrbracket} (j^\sigma - j), \quad
\mult(\sigma) = \left( \mult_1(\sigma), \mult_2(\sigma), \ldots,
\mult_n(\sigma) \right). \]
With the convention $\mult_0(\sigma) = \mult_{n+1}(\sigma) = 0$,
we have $k^\sigma = k + \mult_k(\sigma) - \mult_{k-1}(\sigma)$,
so that the \emph{multiplicity vector} $\mult(\sigma)$ easily 
determines $\sigma$. The reason for calling $\mult_k(\sigma)$ 
a multiplicity is clear from Theorem \ref{theo:mult}.

If $d, \tilde d \in \NN^n$ we write $d \le \tilde d$
if, for all $k$, $d_k \le \tilde d_k$.
If $\sigma_0 \le \sigma_1$ (in the Bruhat order)
then $\mult(\sigma_0) \le \mult(\sigma_1)$
and $\inv(\sigma_0) \le \inv(\sigma_1)$.

\begin{example}
For $n = 5$, let $\sigma_0 = [432156]$ and $\sigma_1 = [612345]$.
We have $\mult(\sigma_0) = (3,4,3,0,0) \le \mult(\sigma_1) = (5,4,3,2,1)$
but $\inv(\sigma_0) = 6 > \inv(\sigma_1) = 5$.
For $n = 6$, let $\sigma_2 = [4321567]$ and $\sigma_3 = [7123456]$.
We have $\inv(\sigma_2) = \inv(\sigma_3) = 6$ 
and $\mult(\sigma_2) = (3,4,3,0,0,0) < \mult(\sigma_3) = (6,5,4,3,2,1)$.
\end{example}

\begin{lemma}
\label{lemma:lessdot}
Let $\sigma_0 \vartriangleleft \sigma_1$
with $\sigma_1 = (i_0i_1) \sigma_0 = \sigma_0 (j_0j_1)$.
Then
\[ \mult_k(\sigma_1) = 
\mult_k(\sigma_0) + (j_1 - j_0)\; [i_0 \le k < i_1]. \]
\end{lemma}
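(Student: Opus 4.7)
The plan is direct: since $\sigma_1 = (i_0i_1)\sigma_0$ simply records the values $j_0 = i_0^{\sigma_0}$ and $j_1 = i_1^{\sigma_0}$ being swapped across positions $i_0$ and $i_1$, the one-line permutations $[1^{\sigma_0}\cdots(n{+}1)^{\sigma_0}]$ and $[1^{\sigma_1}\cdots(n{+}1)^{\sigma_1}]$ agree at every position other than $i_0$ and $i_1$. At those two positions the change in value is $(i_0^{\sigma_1} - i_0^{\sigma_0}) = j_1 - j_0$ and $(i_1^{\sigma_1} - i_1^{\sigma_0}) = j_0 - j_1$.

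I would then plug this into the defining sum $\mult_k(\sigma) = \sum_{j \in \llbracket k \rrbracket}(j^\sigma - j)$ and split into three cases according to where $k$ lies relative to $i_0 < i_1$. For $k < i_0$ neither $i_0$ nor $i_1$ contributes to the sum, so the multiplicity is unchanged. For $i_0 \le k < i_1$ only the position $i_0$ contributes, giving the shift $j_1 - j_0$. For $k \ge i_1$ both positions contribute, and the two shifts $+(j_1-j_0)$ and $-(j_1-j_0)$ cancel, so again the multiplicity is unchanged. Combining the three cases produces exactly the factor $[i_0 \le k < i_1]$ in the claimed formula.

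There is essentially no obstacle: the argument is a one-variable telescoping observation, and the hypothesis $\sigma_0 \vartriangleleft \sigma_1$ enters only through the combinatorial description $\sigma_1 = (i_0i_1)\sigma_0 = \sigma_0(j_0j_1)$ with $i_0^{\sigma_0} = j_0$ and $i_1^{\sigma_0} = j_1$, which is already recorded in the preceding paragraph. Thus the proof reduces to tracking the two altered summands of $\mult_k$, with the indicator $[i_0 \le k < i_1]$ arising as the count of altered positions that lie in $\llbracket k \rrbracket$.
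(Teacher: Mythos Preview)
Your proposal is correct and is exactly the easy computation the paper has in mind; the paper's own proof consists of the single sentence ``This is an easy computation.'' You have simply spelled out the three cases that make this computation transparent.
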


Here we use Iverson notation (or Iverson bracket):
if $\phi$ is a statement, then $[\phi] = 1$ if $\phi$ is true
and $[\phi] = 0$ if $\phi$ is false.
Thus, for instance,
\begin{equation}
\label{equation:iverson}
[i_0 \le k < i_1] =
\begin{cases} 1, & i_0 \le k < i_1, \\ 0, & \textrm{otherwise.} \end{cases}
\end{equation}

\begin{proof}
This is an easy computation.
\end{proof}

Let $\inv_i(\sigma) = |\Inv_i(\sigma)|$ where
\begin{equation}
\label{equation:invi}
\Inv_i(\sigma) = \{ j \;|\; i < j, i^\sigma > j^\sigma \}
= \{ j \;|\; (i,j) \in \Inv(\sigma) \};
\end{equation}
notice that $\Inv(\sigma) = \bigsqcup_i (\{i\} \times \Inv_i(\sigma))$
and therefore $\inv(\sigma) = \sum_i \inv_i(\sigma)$.

\begin{lemma}
\label{lemma:invpi}
For any $\sigma \in S_{n+1}$ and for any $i \in \nmaisum$
we have 
% v3v4: Index error
% \[\inv_i(\sigma) - \inv_{i^\sigma}(\sigma^{-1})= i^\sigma - i =\mult_{i+1}(\sigma)-\mult_i(\sigma).\]
\[\inv_i(\sigma) - \inv_{i^\sigma}(\sigma^{-1})= i^\sigma - i =\mult_{i}(\sigma)-\mult_{i-1}(\sigma).\]
\end{lemma}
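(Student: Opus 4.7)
The second equality is immediate from the definition of $\mult$: since $\mult_i(\sigma) = \sum_{j\in\llbracket i\rrbracket}(j^\sigma - j)$, telescoping gives $\mult_i(\sigma) - \mult_{i-1}(\sigma) = i^\sigma - i$. So the real content is the first equality. My plan is to establish it by a direct double-counting argument that partitions $\nmaisum \setminus \{i\}$ into four sets according to how each entry compares to $i$ and how its $\sigma$-image compares to $i^\sigma$.

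First I would rewrite $\inv_{i^\sigma}(\sigma^{-1})$ in terms of $\sigma$ alone. By definition (Equation \ref{equation:invi}),
\[ \inv_{i^\sigma}(\sigma^{-1}) = |\{ l \in \nmaisum \;|\; l > i^\sigma,\; l^{\sigma^{-1}} < i \}|. \]
Substituting $l = j^\sigma$ (so $l^{\sigma^{-1}} = j$, and the substitution is a bijection on $\nmaisum$), this equals $|\{ j \;|\; j^\sigma > i^\sigma,\; j < i \}|$. Meanwhile $\inv_i(\sigma) = |\{ j \;|\; j > i,\; j^\sigma < i^\sigma \}|$ directly from the definition.

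Next I would introduce the four disjoint subsets of $\nmaisum \setminus \{i\}$:
\begin{align*}
A &= \{ j \;|\; j > i,\; j^\sigma > i^\sigma \}, &
B &= \{ j \;|\; j > i,\; j^\sigma < i^\sigma \}, \\
C &= \{ j \;|\; j < i,\; j^\sigma > i^\sigma \}, &
D &= \{ j \;|\; j < i,\; j^\sigma < i^\sigma \}.
\end{align*}
By the previous paragraph $|B| = \inv_i(\sigma)$ and $|C| = \inv_{i^\sigma}(\sigma^{-1})$. On the other hand, the set $\{ j \in \nmaisum \setminus \{i\} \;|\; j > i \} = A \sqcup B$ has cardinality $n+1-i$, while the set $\{ j \in \nmaisum \setminus \{i\} \;|\; j^\sigma > i^\sigma \} = A \sqcup C$ has cardinality $n+1-i^\sigma$ (since $\sigma$ is a bijection and $j^\sigma$ ranges over $\nmaisum \setminus \{i^\sigma\}$). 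Subtracting the two identities $|A|+|B| = n+1-i$ and $|A|+|C| = n+1-i^\sigma$ eliminates $|A|$ and yields $|B|-|C| = i^\sigma - i$, which is exactly the claim.

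There is essentially no obstacle: the argument is a pure counting shuffle, and the only subtlety is bookkeeping the substitution $l = j^\sigma$ correctly so that $\inv_{i^\sigma}(\sigma^{-1})$ gets expressed in terms of $\sigma$ and $i$. Everything else is set arithmetic.
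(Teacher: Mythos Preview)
Your proof is correct and is essentially the same argument as the paper's. The paper observes that $\sigma$ restricts to a bijection from your set $A=\{j\mid j>i,\;j^\sigma>i^\sigma\}$ to $\{j'\mid j'>i^\sigma,\;(j')^{\sigma^{-1}}>i\}$ and computes the common cardinality as $(n+1-i)-\inv_i(\sigma)$ and as $(n+1-i^\sigma)-\inv_{i^\sigma}(\sigma^{-1})$; your version names all four blocks $A,B,C,D$ and eliminates $|A|$ from $|A|+|B|=n+1-i$ and $|A|+|C|=n+1-i^\sigma$, which is the same computation.
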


\begin{proof}
The permutation $\sigma$ restricts to a bijection between the two sets:
\begin{align*}
\{i+1, \ldots, n+1\} \smallsetminus \Inv_i(\sigma) 
&= \{ j \;|\; i < j, i^\sigma < j^\sigma \}, \\
\{i^\sigma+1, \ldots, n+1\} \smallsetminus \Inv_{i^\sigma}(\sigma^{-1}) 
&= \{ j' \;|\; i^{\sigma} < j', i < (j')^{\sigma^{-1}} \},
\end{align*}
with cardinalities
$n+1-i-\inv_i(\sigma)$ and $n+1-i^\sigma-\inv_{i^\sigma}(\sigma^{-1})$.
\end{proof}

The notion of multiplicity is closely related 
to a beautiful 1-1 correspondence, discovered 
by S. Elnitsky \cite{Elnitsky}, between commutation 
classes of reduced words for a permutation 
$\sigma\in S_{n+1}$ and the rhombic tilings of 
a certain (possibly degenerate) $2(n+1)$-gon 
associated to $\sigma$. 
This correspondence is an expedient way to obtain 
reduced words from complete notation.
%This $2(n+1)$-gon is readily obtained from the complete notation $\sigma_{0}=[1^{\sigma}\cdots (n+1)^{\sigma}]$ as follows: take a regular  $2(n+1)$-gon with a horizontal bissecting line passing through opposite vertices and assign the numbers $1,2,\cdots,n$ to the bottom half edges in increasing order from left to right. Now, we replace the edges in the top half by a sequence of $n$ edges such that the $k^{\text{th}}$ edge from left to right is parallel to the edge numbered $k^{\pi_{0}}$ in the bottom half. 
An equivalent (if somewhat deformed) version 
of this construction is obtained by considering 
tesselations by parallelograms of the plane region 
$\mathcal{P}_{\sigma}$ between the graphs 
of $k\mapsto (2\mult_{k}(\sigma)-\mult_{k}(\eta))$ 
and $k\mapsto (-\mult_{k}(\eta))$. 
%These decompositions can be performed directly on the graph of $k\mapsto\mult_{k}(\sigma)$, as shown in figure \ref{figure:multiplicity} below.
Under this deformation, the initial regular $2(n+1)$-gon is taken into the region $\mathcal{P}_{\eta}$ between the graphs of $\mult_\eta$ and $-\mult_{\eta}$. Given a decomposition of $\mathcal{P}_{\sigma_{0}}$ into $\inv(\sigma_{0})$ parallelograms, each one of them has a diagonal lying on one of the vertical lines $k=1,2,\cdots,n$. One then looks for an exposed, non imbricate piece to withdraw from the uppermost layer (there can be many of them to choose from). Suppose you pick a parallelogram $\mathcal{Q}_{1}$ crossed by the vertical line $k=j_{1}$. The plane region $\overline{\mathcal{P}_{\sigma_{0}}\smallsetminus\mathcal{Q}_{1}}$ is the $2(n+1)$-gon $\mathcal{P}_{\sigma_{1}}$ associated to the permutation $\sigma_{1}\vartriangleleft\sigma_{0}$ given by $\sigma_{0}=a_{i_{1}}\sigma_{1}$. Proceeding likewise with $\mathcal{P}_{\sigma_{1}}$ and so on, after $\inv(\sigma_{0})$ steps we arrive at a reduced word $\sigma_{0}=a_{i_{1}}a_{i_{2}}\cdots a_{i_{\inv(\pi)}}$.
An analogous procedure can be performed directly on the graph of $\mult_{\pi_{0}}$, as illustrated in Figure \ref{figure:multiplicity}.

\begin{figure}[ht]
\centering
\includegraphics[width=0.3\textwidth]{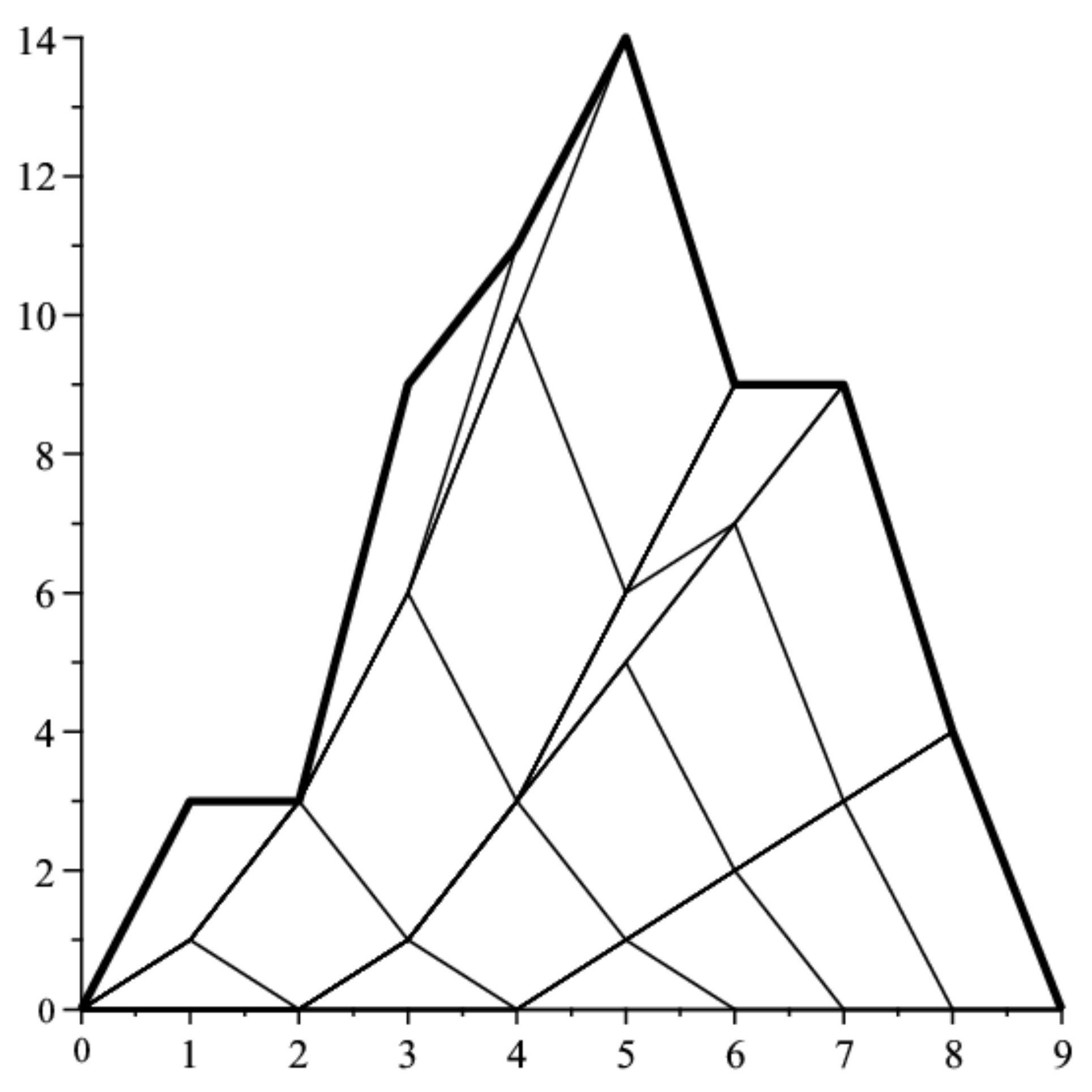}
\includegraphics[width=0.3\textwidth]{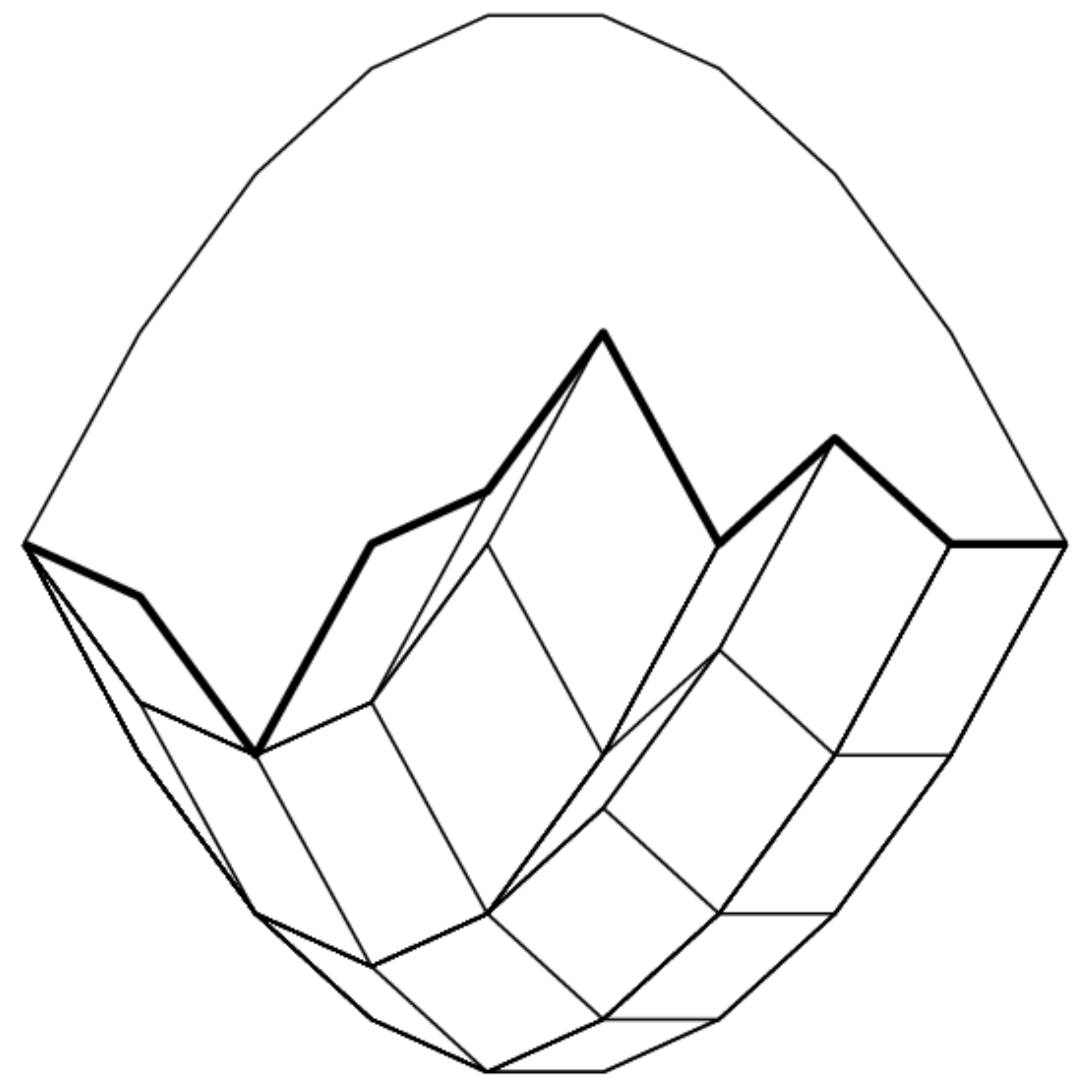}
\caption{Tilings of the graph of 
$k\mapsto \mult_{k}{\sigma}$ and of the 
Elnitsky's polygon $\mathcal{P}_{\sigma}$ for 
$\sigma=[429681735]\in S_{9}$ corresponding 
to the commutation class of the reduced word 
$\sigma=a_{1}a_{3}a_{4}a_{5}a_{4}a_{3}a_{2}a_{1}a_{6}a_{5}a_{7}a_{6}a_{5}a_{4}a_{3}a_{8}a_{7}a_{6}a_{5}$.}
\label{figure:multiplicity}
\end{figure}

\section{Signed permutations}
\label{sect:sign}

In this section we study the lift $\widetilde\B^+_{n+1}\subset\Spin_{n+1}$ of the index two subgroup $\B^+_{n+1}=\B_{n+1}\cap\SO_{n+1}$ of 
the hyperoctahedral group $\B_{n+1}$. 
Recall the surjective group homomorphism 
$\sigma:\widetilde\B^+_{n+1}\to S_{n+1}$, $z\mapsto\sigma_z$, and its 
kernel $\Quat_{n+1}$. %=\Pi^{-1}[\Diag^+_{n+1}]$. 

The group $\B_{n+1}$ is a Coxeter group (whence the notation) 
with generators $P_{a_1},\ldots,P_{a_n}, R$, where 
$e_k^\top R=(-1)^{[k=1]}e_k^\top$, but we do not use this presentation
(the bracket $[k=1]$ is another example of Iverson bracket,
already seen in Equation \ref{equation:iverson}). 
Rather, consider the elements  
$\acute a_1,\ldots, \acute a_n\in\widetilde\B^+_{n+1}$ 
defined in the introduction by 
$\acute a_j=\exp\left(\frac\pi2\fa_j\right)$, 
$\fa_j=e_{j+1}e_j^\top-e_je_{j+1}^\top\in\so_{n+1}\approx\spin_{n+1}$. 
Also, recall the elements $\hat a_j=\acute a_j^2\in\Quat_{n+1}$.

\begin{lemma}
\label{lemma:stepacute}
The following identities hold:
\[ |i-j| \ne 1 \quad \implies \quad
\acute a_j\acute a_i = \acute a_i\acute a_j, \;
\hat a_j \acute a_i = \acute a_i \hat a_j, \;
\hat a_j \hat a_i = \hat a_i \hat a_j; \]
\[ \acute a_i\acute a_{i+1}\acute a_i = \acute a_{i+1}\acute a_i\acute a_{i+1};
\qquad
(\acute a_i)^{-1}\acute a_{i+1}(\acute a_i)^{-1}
= \acute a_{i+1}(\acute a_i)^{-1}\acute a_{i+1}; \]
\[ |i-j| = 1 \quad \implies \quad
\hat a_j \acute a_i =  (\acute a_i)^{-1} \hat a_j, \;
\hat a_j \hat a_i = - \hat a_i \hat a_j. \]
\end{lemma}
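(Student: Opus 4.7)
I would split the identities into three blocks and handle each differently. The $|i-j|>1$ commutations fall out of a continuity/path-lifting argument from $\SO_{n+1}$; the braid relations and the $|i-j|=1$ anticommutations reduce to short computations inside a copy of $\Spin_3 \cong \Ss^3 \subset \Spin_{n+1}$, using Equation \ref{equation:clifford}.

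For $|i-j|>1$, the matrices $\fa_i,\fa_j \in \so_{n+1}$ have disjoint coordinate supports, so they commute and the one-parameter subgroups $\alpha_i,\alpha_j$ commute pointwise in $\SO_{n+1}$. The key step is to lift this commutation to $\Spin_{n+1}$. Consider the continuous map $c:\RR^2 \to \Spin_{n+1}$ defined by
\[
c(\theta_1,\theta_2)=\alpha_i(\theta_1)\alpha_j(\theta_2)\alpha_i(-\theta_1)\alpha_j(-\theta_2).
\]
Since $\Pi\circ c$ is constantly the identity, $c$ takes values in the discrete fiber $\{\pm 1\}$; continuity, connectedness of $\RR^2$ and $c(0,0)=1$ force $c\equiv 1$. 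The three identities of this block then follow by specializing $(\theta_1,\theta_2)$ to $(\pi/2,\pi/2)$, $(\pi/2,\pi)$ and $(\pi,\pi)$, using $\hat a_j=\alpha_j(\pi)$.

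For the braid relations I would restrict attention to the connected Lie subgroup of $\Spin_{n+1}$ generated by $\alpha_i$ and $\alpha_{i+1}$. A direct bracket computation shows that $\fa_i,\fa_{i+1},[\fa_i,\fa_{i+1}]$ span a copy of $\so_3$, so this subgroup is a copy of $\Spin_3\cong\Ss^3$. Within it, using the Clifford identification $\hat a_j=e_{j+1}e_j$, I first check $\hat a_i\hat a_{i+1}=-\hat a_{i+1}\hat a_i$ (which is already the third identity of the $|i-j|=1$ block) and, combined with $\hat a_i^2=-1$, deduce $\hat a_i\hat a_{i+1}\hat a_i=\hat a_{i+1}$. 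Then writing $\acute a_j=(1+\hat a_j)/\sqrt 2$ via Equation \ref{equation:clifford}, the expansion
\[
\acute a_i\acute a_{i+1}\acute a_i = \tfrac{1}{2\sqrt 2}(1+\hat a_i)(1+\hat a_{i+1})(1+\hat a_i)
\]
collapses to $\tfrac{1}{\sqrt 2}(\hat a_i+\hat a_{i+1})$, and the symmetric expansion of $\acute a_{i+1}\acute a_i\acute a_{i+1}$ yields the same value. The second braid identity uses $(\acute a_i)^{-1}=\alpha_i(-\pi/2)=(1-\hat a_i)/\sqrt 2$ and the analogous expansion produces $\tfrac{1}{\sqrt 2}(\hat a_{i+1}-\hat a_i)$ on both sides. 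The remaining identity $\hat a_j\acute a_i=(\acute a_i)^{-1}\hat a_j$ is equivalent, after left multiplication by $\acute a_i$, to $\acute a_i\hat a_j\acute a_i=\hat a_j$; the expansion $\tfrac12(\hat a_j+\hat a_j\hat a_i+\hat a_i\hat a_j+\hat a_i\hat a_j\hat a_i)$ simplifies because the middle two terms cancel by anticommutation and the outer two sum to $\hat a_j$ using $\hat a_i\hat a_j\hat a_i=\hat a_j$, valid for $j=i\pm 1$ by the same Clifford step.

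The main obstacle is the braid identity itself: unlike the $|i-j|>1$ commutations, it cannot be extracted by a pure continuity argument, since the corresponding relation $a_ia_{i+1}a_i=a_{i+1}a_ia_{i+1}$ in $S_{n+1}$ only determines each side of the $\Spin_{n+1}$ identity up to a $\pm 1$ sign. Pinning down that sign is exactly what the explicit $\Spin_3$ calculation accomplishes; everything else then falls into place.
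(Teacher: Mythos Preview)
Your proof is correct and matches what the paper does: the paper's proof is essentially the one-line remark that ``these are simple computations with any point of view; they are particularly easy using the Clifford algebra $\Cliff_{n+1}^0$'', and your Clifford expansions via Equation~\ref{equation:clifford} are exactly that computation carried out in full. Your continuity/path-lifting argument for the $|i-j|>1$ block is a clean alternative to the (equally short) Clifford check that $\hat a_i=e_{i+1}e_i$ and $\hat a_j=e_{j+1}e_j$ commute when all four basis vectors are distinct; either route is fine, and your closing observation that the braid identity genuinely requires pinning down a sign (and hence cannot be obtained by lifting alone) is a useful remark.
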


\begin{proof}
These are simple computations with any point of view;
they are particularly easy using the Clifford algebra $\Cliff_{n+1}^0$,
as discussed in the introduction near Equation \ref{equation:clifford}.
\end{proof}

Each element $q \in \Quat_{n+1}$ can be written
uniquely as
\[q = \pm \hat a_1^{\varepsilon_1} \hat a_2^{\varepsilon_2} \cdots
\hat a_n^{\varepsilon_n}, \quad
\varepsilon_i \in \{0,1\}.\] 
In particular, the elements $\hat a_1,\ldots,\hat a_n$ generate $\Quat_{n+1}$.
Furthermore, if $z \in \widetilde \B_{n+1}^{+}$ and
$\sigma_{z}= a_{i_1} \cdots a_{i_k} \in S_{n+1}$,
take $z_1 = \acute a_{i_1} \cdots \acute a_{i_k} \in \widetilde \B_{n+1}^{+}$:
we have $\sigma_{z} = \sigma_{z_1}$ and therefore
$z = q z_1$ with $q \in \Quat_{n+1}$.
In particular, the elements $\acute a_1,\ldots,\acute a_n$ 
generate $\widetilde \B_{n+1}^{+}$.
We make this construction more systematic.

\begin{lemma}
\label{lemma:goodacute}
If $\sigma \in S_{n+1}$ is expressed by two reduced words
$\sigma = a_{i_1} \cdots a_{i_k} = a_{j_1} \cdots a_{j_k}$
then
$\acute a_{i_1} \cdots \acute a_{i_k} = \acute a_{j_1} \cdots \acute a_{j_k}$.
\end{lemma}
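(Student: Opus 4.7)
The plan is to reduce this to the well-known fact (Matsumoto--Tits, cited in the excerpt right after Equations \ref{equation:reducedword1} and \ref{equation:reducedword2}) that any two reduced words for the same $\sigma \in S_{n+1}$ are connected by a finite sequence of the two local moves: commutation $(\ldots,i,j,\ldots) \leftrightarrow (\ldots,j,i,\ldots)$ with $|i-j|\ne 1$, and the braid move $(\ldots,i,i+1,i,\ldots) \leftrightarrow (\ldots,i+1,i,i+1,\ldots)$. Thus it suffices to check that the value of the product $\acute a_{i_1}\cdots \acute a_{i_k}$ is unchanged under each of these two moves.

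For the commutation move, Lemma \ref{lemma:stepacute} gives $\acute a_i \acute a_j = \acute a_j \acute a_i$ whenever $|i-j| \ne 1$, so replacing a factor $\acute a_i \acute a_j$ by $\acute a_j \acute a_i$ inside the product $\acute a_{i_1}\cdots\acute a_{i_k}$ leaves the product unchanged. For the braid move, Lemma \ref{lemma:stepacute} likewise gives $\acute a_i \acute a_{i+1} \acute a_i = \acute a_{i+1} \acute a_i \acute a_{i+1}$, so replacing a factor $\acute a_i \acute a_{i+1} \acute a_i$ by $\acute a_{i+1} \acute a_i \acute a_{i+1}$ in the product again preserves its value.

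Combining these two observations with the Matsumoto--Tits connectedness of reduced words yields the lemma: given any two reduced words $(i_1,\ldots,i_k)$ and $(j_1,\ldots,j_k)$ for $\sigma$, there is a finite chain of reduced words connecting them via the two moves, and the product $\acute a_{i_1}\cdots \acute a_{i_k}$ is invariant along the chain, hence equals $\acute a_{j_1}\cdots \acute a_{j_k}$. There is no real obstacle here: the only substantive content was already packaged into Lemma \ref{lemma:stepacute} (which encodes that $\acute a_1,\ldots,\acute a_n$ satisfy the braid relations of $S_{n+1}$), and the Matsumoto--Tits theorem is invoked as a standard fact about Coxeter groups.
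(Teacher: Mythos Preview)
Your proof is correct and follows exactly the same approach as the paper: invoke the connectedness of reduced words under the moves in Equations~\ref{equation:reducedword1} and~\ref{equation:reducedword2}, and observe that Lemma~\ref{lemma:stepacute} guarantees invariance of the product under each move. The paper's proof is simply a one-sentence version of what you wrote.
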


\begin{proof}
Both moves (as in Equations \ref{equation:reducedword1} and \ref{equation:reducedword2})
are taken care of by Lemma \ref{lemma:stepacute}.
\end{proof}

Let $\grave a_i = (\acute a_i)^{-1}$.
For $\sigma \in S_{n+1}$, take a reduced word
$\sigma = a_{i_1} \cdots a_{i_k}$ and set 
\[ \longacute(\sigma) = \acute\sigma = \acute a_{i_1} \cdots \acute a_{i_k};
\qquad
\longgrave(\sigma) = \grave\sigma = \grave a_{i_1} \cdots \grave a_{i_k}, \]
as in Equation \ref{equation:acutegrave}.
Lemma \ref{lemma:goodacute} shows that the maps 
$\longacute,\longgrave: S_{n+1} \to \widetilde \B_{n+1}^{+}$ are well defined.
%similarly, the map $\longgrave: S_{n+1} \to \widetilde \B_{n+1}^{+}$ is also well defined.
Notice that these maps are not homomorphisms.
Similarly, non-reduced words do not work
in the above formulas for $\acute\sigma$ and $\grave\sigma$. 
%$a_1$ has order $2$ but $\acute a_1$ has order $8$.
Also, define 
\[ \longhat(\sigma) = \hat\sigma =
\acute\sigma (\grave\sigma)^{-1} =
\acute a_{i_1} \cdots \acute a_{i_k} \acute a_{i_k} \cdots \acute a_{i_1}, \]
so that $\hat\sigma \in \Quat_{n+1}$ for all $\sigma \in S_{n+1}$.
Notice that these notations are consistent
with the previously introduced special cases $\acute a_i$ and $\hat a_i$.

\begin{lemma}
\label{lemma:pihat}
Consider $\sigma \in S_{n+1}$ and set
$\acute{P}=\Pi(\acute{\sigma})\in \B^+_{n+1}$. We have
\[e^{\transpose}_{i}\acute{P}=
(-1)^{\inv_{i}(\sigma)}e^{\transpose}_{i^{\sigma}}\,,
\qquad \acute{P}e_{j}=
(-1)^{\inv_{j^{\sigma^{-1}}}(\sigma)}e_{j^{\sigma^{-1}}}\]
and therefore $\acute{P}_{ij}=
e^{\transpose}_{i}\acute{P}e_{j}=
(-1)^{\inv_{i}(\sigma)}[j=i^{\sigma}].$ 
%The nonzero entries of
%$\acute P = \Pi(\acute\sigma) \in \B_{n+1}^{+}$ are
%\[ (\acute P)_{i,i^\sigma} = (-1)^{\inv_i(\sigma)}. \]
The nonzero entries of $\hat P = \Pi(\hat\sigma) \in \Diag_{n+1}^{+}$ are
\[ (\hat P)_{ii} = (-1)^{\inv_{i}(\sigma) + \inv_{i^\sigma}(\sigma^{-1})}=
(-1)^{i + i^\sigma}
% v3v4: index error
% = (-1)^{\mult_{i}(\sigma)+\mult_{i+1}(\sigma)}. \]
= (-1)^{\mult_{i-1}(\sigma)+\mult_{i}(\sigma)}. \]
We also have $ \hat\sigma = \pm \hat a_1^{\mult_1(\sigma)}\cdots
%\hat a_i^{\mult_i(\sigma)}\cdots
\hat a_n^{\mult_n(\sigma)}$.
\end{lemma}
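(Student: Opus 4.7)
My plan is to prove the four claims in sequence, starting with the row formula for $\acute P$ by induction on $\inv(\sigma)$; everything else will follow by formal manipulations.

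As a preliminary, I would unpack $\Pi(\acute a_j) = \exp(\tfrac{\pi}{2}\fa_j)$: on the plane spanned by $e_j, e_{j+1}$ it is the rotation by $\pi/2$ sending $e_j \mapsto e_{j+1}$ and $e_{j+1} \mapsto -e_j$, and is the identity on the orthogonal complement. Reading this row-wise, $e_j^{\transpose} \Pi(\acute a_j) = -e_{j+1}^{\transpose}$, $e_{j+1}^{\transpose} \Pi(\acute a_j) = e_j^{\transpose}$, and $e_i^{\transpose} \Pi(\acute a_j) = e_i^{\transpose}$ for $i \notin \{j, j+1\}$, which matches the first identity in the case $\sigma = a_j$ and supplies the base of the induction. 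For the inductive step, take a reduced word $\sigma = \sigma' a_j$ with $\inv(\sigma') = \inv(\sigma) - 1$; by Lemma \ref{lemma:goodacute}, $\acute P = \acute P' \cdot \Pi(\acute a_j)$. Applying the inductive hypothesis $e_i^{\transpose} \acute P' = (-1)^{\inv_i(\sigma')} e_{i^{\sigma'}}^{\transpose}$ and then the row action above, I would split into three cases according to whether $i^{\sigma'}$ equals $j$, equals $j+1$, or neither. The key combinatorial input is that if $i_0 = (\sigma')^{-1}(j) < (\sigma')^{-1}(j+1) = i_1$ (the inequality encoding the length increase), then $\inv_{i_0}(\sigma) = \inv_{i_0}(\sigma') + 1$, while $\inv_i(\sigma) = \inv_i(\sigma')$ for $i \ne i_0$; this is immediate by inspecting which $k > i$ contribute to $\inv_i$ before and after the swap of values $j, j+1$.

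The second and third claims are then immediate: the third is just the first read entry-by-entry, and the second locates the unique nonzero entry in column $j$ of $\acute P$. For the diagonal formula for $\hat P$, I observe that directly from the definitions, if $\sigma = a_{i_1}\cdots a_{i_k}$ then $(\grave\sigma)^{-1} = \acute a_{i_k}\cdots \acute a_{i_1} = \longacute(\sigma^{-1})$, so $\grave P^{-1} = \Pi(\longacute(\sigma^{-1}))$. Applying the first identity to $\sigma^{-1}$ yields $(\grave P^{-1})_{jk} = (-1)^{\inv_j(\sigma^{-1})}[j^{\sigma^{-1}} = k]$, and the product $\hat P = \acute P \grave P^{-1}$ collapses into a diagonal matrix with $(\hat P)_{ii} = (-1)^{\inv_i(\sigma) + \inv_{i^\sigma}(\sigma^{-1})}$. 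The two alternative expressions $(-1)^{i + i^\sigma}$ and $(-1)^{\mult_{i-1}(\sigma) + \mult_i(\sigma)}$ follow from Lemma \ref{lemma:invpi} together with the telescoping identity $\mult_i(\sigma) - \mult_{i-1}(\sigma) = i^\sigma - i$, after reducing modulo $2$.

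For the last identity, I would use the normal form $\hat\sigma = \pm \hat a_1^{\varepsilon_1}\cdots \hat a_n^{\varepsilon_n}$ with $\varepsilon_i \in \{0,1\}$, established earlier. Since $\Pi(\hat a_j) = \Pi(\acute a_j)^2$ is the $\pi$-rotation in the $(e_j, e_{j+1})$-plane, i.e., the diagonal matrix with $-1$'s exactly in positions $j$ and $j+1$, we have that $\Pi(\hat a_1^{\varepsilon_1}\cdots \hat a_n^{\varepsilon_n})$ is diagonal with $i$-th entry $(-1)^{\varepsilon_{i-1} + \varepsilon_i}$ (with the convention $\varepsilon_0 = \varepsilon_{n+1} = 0$). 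Matching against the diagonal of $\hat P$ just computed and inducting on $i$ from the base $\varepsilon_0 = 0 = \mult_0(\sigma)$ forces $\varepsilon_i \equiv \mult_i(\sigma) \pmod 2$ for every $i$, giving the stated expression up to the overall sign.

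The only genuinely delicate point is the three-case bookkeeping in the inductive step for the first identity, where one must carefully track how $i^\sigma$ and $\inv_i(\sigma)$ change under right-multiplication by $a_j$; once that is set up, the remaining claims reduce to transposes, a single $2 \times 2$ matrix product, and a parity induction.
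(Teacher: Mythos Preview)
Your proof is correct and follows essentially the same approach as the paper: induction on $\inv(\sigma)$ for the row formula, the identity $(\grave\sigma)^{-1} = \longacute(\sigma^{-1})$ to compute $\hat P$, Lemma~\ref{lemma:invpi} for the alternative sign expressions, and a comparison of images under $\Pi$ for the final claim. The only cosmetic difference is that the paper carries out the induction step via \emph{left} multiplication, writing $\sigma = a_k\sigma_1$ so that $\acute P = \Pi(\acute a_k)\acute P_1$ and the sign identity becomes the uniform $\inv_i(\sigma) = \inv_i(a_k) + \inv_{i^{a_k}}(\sigma_1)$ (checked for $i<k$, $i=k$, $i=k+1$, $i>k+1$), whereas you use \emph{right} multiplication $\sigma = \sigma' a_j$ and verify directly that only $\inv_{i_0}$ jumps by one; both bookkeepings are equally short.
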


The expression $[j=i^{\sigma}]$ in the statement above is 
another use of Iverson bracket. 
Recall that $\inv_i(\sigma) = |\Inv_i(\sigma)|$,
where $\Inv_i(\sigma)$ 
is defined in Equation \ref{equation:invi}.

% \fh

% Take
% \begin{equation}
% \label{equation:fh}
% \fh = \sum_{i\in\nmesmo} \sqrt{i(n+1-i)}\; \fa_i \in \so_{n+1}, \qquad
% \Gamma_0(t) = \exp(\pi t \fh).
% \end{equation}
% A simple computation shows that
% $\Gamma_0(0) = 1$, $\Gamma_0(\frac12) = \acute\eta$, $\Gamma_0(1) = \hat\eta$
% and $\Gamma(t) \in \Bru_{\acute\eta}$ for all $t \in (0,1)$.

\begin{proof}%[Proof of Lemma \ref{lemma:pihat}]
The first expression for the diagonal entries of 
$\hat{P}= \Pi(\acute\sigma)\Pi(\longacute(\sigma^{-1}))$ 
follows directly from the first two formulae, 
which we now prove by induction on $\inv(\sigma)$. 
The base cases $\inv(\sigma) \le 1$ are easy. 
Assume $\sigma_1 \vartriangleleft \sigma = a_k \sigma_1$, 
so that $\acute{P}=\Pi(\acute{a}_{k})\acute{P}_{1}$, 
where $\acute{P}_{1}=\Pi(\acute{\sigma}_{1})$. %, and that $\acute P_1 = \Pi(\acute\sigma_1)$ satisfy the formulae above.
%Let $j_0 = i^{\sigma_1} = (i+1)^{\sigma_0}$ and $j_1 = i^{\sigma_0} = (i+1)^{\sigma_1}$
By the induction hypotheses we have 
\[e^{\transpose}_{i}\acute{P}=
e^{\transpose}_{i}\Pi(\acute{a}_{k})\acute{P}_{1}= 
(-1)^{\inv_{i}(a_{k})}e^{\transpose}_{i^{a_{k}}}\acute{P}_{1} =
(-1)^{\inv_{i}(a_{k})+\inv_{i^{a_{k}}}(\sigma_{1})}e^{\transpose}_{i^{\sigma}}.\]
To see that $\inv_{i}(\sigma)=\inv_{i}(a_{k})+\inv_{i^{a_{k}}}(\sigma_1)$ for all values of $i\in\nmaisum$, consider separately the cases $i<k$, $i=k$, $i=k+1$ and $i>k+1$. The second formula is similar. The alternate expressions for $\hat{P}_{ii}$ are obtained via Lemma \ref{lemma:invpi}.
%so that
%$$ (\Pi(\acute a_i) P_1)_{i,j_1} = (-1)^{\inv_{i+1}(\sigma_1)+1}; \qquad
%(\Pi(\acute a_i) P_1)_{i+1,j_0} = (-1)^{\inv_{i}(\sigma_1)}. $$
%Notice that
%$$ \Inv_i(\sigma_0) = \Inv_{i+1}(\sigma_1)  \sqcup \{ i+1 \}; \quad
%\Inv_{i+1}(\sigma_0) = \Inv_{i}(\sigma_1); $$
% \begin{align*}
% \Inv_i(\sigma_0) = \{ j \;|\; (i,j) \in \Inv(\sigma_0) \} &=
% \{ j \;|\; (i+1,j) \in \Inv(\sigma_1) \} \sqcup \{ i+1 \}; \\
% \Inv_{i+1}(\sigma_0) = \{ j \;|\; (i+1,j) \in \Inv(\sigma_0) \} &=
% \{ j \;|\; (i,j) \in \Inv(\sigma_1) \} 
% \end{align*}
%and therefore
%$\inv_i(\sigma_0) = \inv_{i+1}(\sigma_1) +1$ and
%$\inv_{i+1}(\sigma_0) = \inv_{i}(\sigma_1)$,
%proving that $\acute P_0 = \Pi(\acute a_i) \acute P_1$
%and completing the induction.
%We have
%$\hat P = \Pi(\hat \sigma)
%= \Pi(\acute\sigma)\Pi(\longacute(\sigma^{-1}))$
%and therefore
%$$ (\hat P)_{i,i} = (-1)^{\inv_{i}(\sigma) + \inv_{i^\sigma}(\sigma^{-1})}. %$$
%By Lemma \ref{lemma:invpi},% and by the definition of multiplicity,
%this is equal to the expression in the statement.
%Finally, setting $q = \hat a_1^{\mult_1(\sigma)}\cdots
%\hat a_i^{\mult_i(\sigma)}\cdots
%\hat a_n^{\mult_n(\sigma)}$  
%and $E = \Pi(q)$ 
%we have 
%$ E_{i,i} = (-1)^{\mult_{i}(\sigma)+\mult_{i+1}(\sigma)} $
%and therefore $E = \hat P$, hence $\hat\sigma = \pm q$.
The last of these expressions imply that 
$\hat P=\Pi(\hat\sigma)=
\Pi(\hat a_1^{\mult_1(\sigma)}\cdots\hat a_n^{\mult_n(\sigma)})$, 
and therefore, 
$\hat\sigma=
\pm\hat a_1^{\mult_1(\sigma)}\cdots\hat a_n^{\mult_n(\sigma)}$.
\end{proof}

If $\sigma_1 \vartriangleleft \sigma_0 = a_i \sigma_1$ % = \sigma_1 (j_0 j_1)$
then, by definition,
\[ \acute\sigma_0 = \acute a_i \acute\sigma_1, \qquad
\hat\sigma_0 = \acute a_i \hat\sigma_1 \acute a_i. \]
We show how to obtain a different recursive formula for $\hat\sigma_0$.

\begin{lemma}
\label{lemma:parityquat}
Let $q \in \Quat_{n+1}$ and $E = \Pi(q) \in \Diag^+_{n+1}$; write
\[ q = \pm \hat a_1^{\varepsilon_1} \cdots \hat a_n^{\varepsilon_n}, \quad \varepsilon_1,\ldots,\varepsilon_n \in \ZZ. \]
With the convention $\varepsilon_0 = \varepsilon_{n+1} = 0$, we have:
\begin{enumerate}
\item{If $\varepsilon_{i-1} + \varepsilon_{i+1}$ is odd then
$q \acute a_i = \grave a_i q$,
$q \hat a_i = - \hat a_i q$, $E_{i+1,i+1} = - E_{i,i}$. }
\item{If $\varepsilon_{i-1} + \varepsilon_{i+1}$ is even then
$q \acute a_i = \acute a_i q$,
$q \hat a_i =  \hat a_i q$, $E_{i+1,i+1} = E_{i,i}$. }
\end{enumerate}
\end{lemma}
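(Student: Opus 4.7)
The plan is to derive all three conclusions by sliding $\acute{a}_i$ (or $\hat{a}_i$) through the word $q=\pm\hat{a}_1^{\varepsilon_1}\cdots\hat{a}_n^{\varepsilon_n}$ factor by factor, using the commutation relations of Lemma \ref{lemma:stepacute}. The crucial reduction is that $\hat{a}_j^{\varepsilon}$ commutes with $\acute{a}_i$ whenever $j=i$ (since $\hat{a}_i=\acute{a}_i^2$) or $|j-i|>1$; only the two neighbours $j=i\pm1$ produce an interesting effect. For $|i-j|=1$, iterating $\hat{a}_j\acute{a}_i=\grave{a}_i\hat{a}_j$ (and the companion identity $\hat{a}_j\grave{a}_i=\acute{a}_i\hat{a}_j$, obtained by conjugating the base relation by $\hat{a}_j$) yields, for every $\varepsilon\in\ZZ$,
\begin{equation*}
\hat{a}_j^{\varepsilon}\,\acute{a}_i \;=\; \acute{a}_i^{(-1)^{\varepsilon}}\,\hat{a}_j^{\varepsilon}.
\end{equation*}

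Compounding the sign flips picked up when $\acute{a}_i$ is slid past $\hat{a}_{i-1}^{\varepsilon_{i-1}}$ and $\hat{a}_{i+1}^{\varepsilon_{i+1}}$—with the boundary convention $\varepsilon_0=\varepsilon_{n+1}=0$ automatically handling $i=1$ and $i=n$—gives $q\,\acute{a}_i=\acute{a}_i^{(-1)^{\varepsilon_{i-1}+\varepsilon_{i+1}}}\,q$, which is $\acute{a}_i\,q$ or $\grave{a}_i\,q$ according to the parity of $\varepsilon_{i-1}+\varepsilon_{i+1}$. Applying this identity twice, together with $\hat{a}_i=\acute{a}_i^2$ and $\acute{a}_i^{-2}=\hat{a}_i^{-1}=-\hat{a}_i$, gives $q\,\hat{a}_i=\acute{a}_i^{2(-1)^{\varepsilon_{i-1}+\varepsilon_{i+1}}}\,q$, equal to $\hat{a}_i\,q$ for even parity and to $-\hat{a}_i\,q$ for odd parity.

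For the diagonal entries of $E=\Pi(q)$, I would use that $\hat{a}_j=\alpha_j(\pi)$, so $\Pi(\hat{a}_j)$ is rotation by $\pi$ in the $(e_j,e_{j+1})$-plane, i.e.\ the diagonal matrix with entries $-1$ in positions $j$ and $j+1$ and $+1$ elsewhere. Multiplying these commuting diagonal matrices yields $E_{k,k}=\pm(-1)^{\varepsilon_{k-1}+\varepsilon_{k}}$, whence $E_{i+1,i+1}/E_{i,i}=(-1)^{\varepsilon_{i-1}+\varepsilon_{i+1}}$ as required. The whole argument is really systematic bookkeeping with Lemma \ref{lemma:stepacute}; the only mild obstacle is keeping track of the edge cases $i\in\{1,n\}$, which the boundary convention on $\varepsilon_0$ and $\varepsilon_{n+1}$ dispatches cleanly.
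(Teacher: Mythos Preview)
Your proof is correct and follows essentially the same approach as the paper: both slide $\acute a_i$ through the word $q$ using the commutation relations of Lemma~\ref{lemma:stepacute}, observe that only the neighbouring factors $\hat a_{i\pm 1}^{\varepsilon_{i\pm 1}}$ contribute, and then square to obtain the $\hat a_i$ statement. The only cosmetic difference is in the last claim about $E$: the paper deduces $E_{i+1,i+1}=\pm E_{i,i}$ by applying $\Pi$ to the already-established identity $q\acute a_i=\acute a_i^{\pm 1}q$, whereas you compute $E_{k,k}=(-1)^{\varepsilon_{k-1}+\varepsilon_k}$ directly from the product and take the ratio; both are equally valid.
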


\begin{proof}
From Lemma \ref{lemma:stepacute}, 
\[ \hat a_j \acute a_i = \begin{cases}
\acute a_i \hat a_j, & |i-j| \ne 1, \\
\grave a_i \hat a_j, & |i-j| = 1; \end{cases} \qquad
\hat a_j \grave a_i = \begin{cases}
\grave a_i \hat a_j, & |i-j| \ne 1, \\
\acute a_i \hat a_j, & |i-j| = 1; \end{cases} \]
these imply the formulas for $q \acute a_i$.
We then have, for $\varepsilon_{i-1} + \varepsilon_{i+1}$ even,
\[ q \hat a_i = q \acute a_i \acute a_i =  \acute a_i q \acute a_i =
\acute a_i \acute a_i q = \hat a_i q \]
and, for $\varepsilon_{i-1} + \varepsilon_{i+1}$ odd,
\[ q \hat a_i = q \acute a_i \acute a_i =  \grave a_i q \acute a_i =
\grave a_i \grave a_i q = -\hat a_i q. \]
Finally, notice that $\varepsilon_{i-1} + \varepsilon_{i+1}$ even implies
$E \Pi(\acute a_i) = \Pi(\acute a_i) E$ and therefore
$E_{i+1,i+1} = E_{i,i}$;
conversely, $\varepsilon_{i-1} + \varepsilon_{i+1}$ odd implies
$E \Pi(\acute a_i) =\Pi((\acute a_i)^{-1}) E = (\Pi(\acute a_i))^{-1} E$
and therefore $E_{i+1,i+1} = - E_{i,i}$.
\end{proof}

\begin{lemma}
\label{lemma:hatstep}
Let $\sigma_1 \vartriangleleft \sigma_0 = a_i \sigma_1 = \sigma_1 (j_0 j_1)$, 
$\delta = j_1 - j_0$.
\begin{enumerate}
\item{If $\delta$ is odd then $\hat \sigma_1 \hat a_i   = \hat a_i \hat \sigma_1$
and $\hat \sigma_0 = \hat a_i \hat \sigma_1 = \hat \sigma_1 \hat a_i$.}
\item{If $\delta$ is even then $\hat \sigma_1 \hat a_i = - \hat a_i \hat \sigma_1$
and $\hat \sigma_0 = \hat \sigma_1 = \hat a_i \hat \sigma_1 \hat a_i$.}
\end{enumerate}
\end{lemma}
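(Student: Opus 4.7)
The plan is to reduce everything to Lemmas \ref{lemma:pihat} and \ref{lemma:parityquat}. From the definition, since $\sigma_0 = a_i \sigma_1$ with $\sigma_1 \vartriangleleft \sigma_0$, we have $\acute\sigma_0 = \acute a_i \acute\sigma_1$ and, using $(\grave\sigma_0)^{-1} = (\grave\sigma_1)^{-1}\acute a_i$, we get
\[ \hat\sigma_0 = \acute\sigma_0 (\grave\sigma_0)^{-1} = \acute a_i \acute\sigma_1 (\grave\sigma_1)^{-1} \acute a_i = \acute a_i \, \hat\sigma_1 \, \acute a_i. \]
So the whole question is how $\hat\sigma_1$ commutes with $\acute a_i$.

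By Lemma \ref{lemma:pihat}, we can write $\hat\sigma_1 = \pm \hat a_1^{\mult_1(\sigma_1)}\cdots \hat a_n^{\mult_n(\sigma_1)}$, and thus apply Lemma \ref{lemma:parityquat} with exponents $\varepsilon_k = \mult_k(\sigma_1)$. The crucial input is the parity of $\varepsilon_{i-1} + \varepsilon_{i+1} = \mult_{i-1}(\sigma_1) + \mult_{i+1}(\sigma_1)$. From $\mult_k - \mult_{k-1} = k^{\sigma_1} - k$ together with $i^{\sigma_1} = j_0$ and $(i+1)^{\sigma_1} = j_1$, a direct computation gives
\[ \mult_{i-1}(\sigma_1) + \mult_{i+1}(\sigma_1) = 2\,\mult_i(\sigma_1) + (j_1 - j_0) - 1, \]
so $\varepsilon_{i-1} + \varepsilon_{i+1}$ has the same parity as $\delta - 1$.

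Now split into cases. If $\delta$ is odd, then $\varepsilon_{i-1} + \varepsilon_{i+1}$ is even, so Lemma \ref{lemma:parityquat} gives $\hat\sigma_1 \acute a_i = \acute a_i \hat\sigma_1$ and $\hat\sigma_1 \hat a_i = \hat a_i \hat\sigma_1$. Then
\[ \hat\sigma_0 = \acute a_i \hat\sigma_1 \acute a_i = \hat\sigma_1 \acute a_i^{\,2} = \hat\sigma_1 \hat a_i = \hat a_i \hat\sigma_1, \]
which is case 1. If $\delta$ is even, then $\varepsilon_{i-1} + \varepsilon_{i+1}$ is odd, so Lemma \ref{lemma:parityquat} yields $\hat\sigma_1 \acute a_i = \grave a_i \hat\sigma_1 = (\acute a_i)^{-1}\hat\sigma_1$ and $\hat\sigma_1 \hat a_i = -\hat a_i \hat\sigma_1$. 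Then
\[ \hat\sigma_0 = \acute a_i \hat\sigma_1 \acute a_i = \acute a_i (\acute a_i)^{-1}\hat\sigma_1 = \hat\sigma_1, \]
and, using $\hat a_i^{\,2} = -1$ together with the anticommutation, $\hat a_i \hat\sigma_1 \hat a_i = -\hat a_i^{\,2}\,\hat\sigma_1 = \hat\sigma_1$, giving case 2.

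The only nontrivial step is the parity computation for $\mult_{i-1}(\sigma_1) + \mult_{i+1}(\sigma_1)$; everything else is a direct application of the previous two lemmas together with $\acute a_i^{\,2} = \hat a_i$ and $\hat a_i^{\,2} = -1$. I do not foresee any serious obstacle.
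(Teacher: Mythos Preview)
Your proof is correct and follows essentially the same route as the paper: both start from $\hat\sigma_0 = \acute a_i\,\hat\sigma_1\,\acute a_i$, express $\hat\sigma_1$ via Lemma~\ref{lemma:pihat} with exponents $\varepsilon_k = \mult_k(\sigma_1)$, and then invoke Lemma~\ref{lemma:parityquat} after computing the parity of $\varepsilon_{i-1}+\varepsilon_{i+1}$ (the paper phrases this as $\varepsilon_{i+1}-\varepsilon_{i-1} \equiv \delta+1 \pmod 2$, which is the same parity statement as your $\delta-1$). The case split and the concluding manipulations with $\acute a_i^{\,2}=\hat a_i$ are identical.
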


\begin{proof}
We know (by definition) that
$ \hat\sigma_0 = \acute a_i \hat\sigma_1 \acute a_i $.
As in Lemma \ref{lemma:parityquat}, write
$\hat\sigma_1 = \pm \hat a_1^{\varepsilon_1} \cdots \hat a_n^{\varepsilon_n}$.
We know from Lemma \ref{lemma:pihat}
that we can take $\varepsilon_j = \mult_j(\sigma_{1})$.
Thus
\[ \varepsilon_{i+1} - \varepsilon_{i-1} = (i+1)^{\sigma_{1}} + i^{\sigma_{1}} - (i+1) - i =
j_1 + j_0 -2i-1 \equiv \delta + 1 \pmod 2. \]
If $\delta$ is odd  then
$\hat\sigma_1 \acute a_i = \acute a_i \hat\sigma_1$
and therefore $\hat \sigma_1 \hat a_i = \hat a_i \hat \sigma_1$ and
$\hat \sigma_0 = \hat a_i \hat \sigma_1 = \hat \sigma_1 \hat a_i$. 
If $\delta$ is even then
$\hat\sigma_1 \acute a_i = (\acute a_i)^{-1} \hat\sigma_1$
and therefore $\hat \sigma_1 \hat a_i = - \hat a_i \hat \sigma_1$ and
$\hat \sigma_0 = \hat \sigma_1 = \hat a_i \hat \sigma_1 \hat a_i$. 
\end{proof}

\begin{example}
Using this result it is easy to compute $\hat\sigma_0$ given $\sigma_0$.
Take, say, $\sigma_0 = [7245136] = [a_1a_2a_3a_4a_3a_2a_1a_5a_4a_3a_6]$.
Take
\begin{gather*}
\sigma_0 = a_1 \sigma_1 \vartriangleright \sigma_1 = [2745136], \qquad
\sigma_1 = a_2 \sigma_2 \vartriangleright \sigma_2 = [2475136], \\
\sigma_2 = a_3 \sigma_3 \vartriangleright \sigma_3 = [2457136], \qquad
\sigma_3 = a_4 \sigma_4 \vartriangleright \sigma_4 = [2451736], \\
\sigma_4 = a_3 \sigma_5 \vartriangleright \sigma_5 = [2415736], \qquad
\sigma_5 = a_2 \sigma_6 \vartriangleright \sigma_6 = [2145736], \\
\sigma_6 = a_1 \sigma_7 \vartriangleright \sigma_7 = [1245736], \qquad
\sigma_7 = a_5 \sigma_8 \vartriangleright \sigma_8 = [1245376], \\
\sigma_8 = a_4 \sigma_9 \vartriangleright \sigma_9 = [1243576], \qquad
\sigma_9 = a_3 \sigma_{10} \vartriangleright \sigma_{10} = [1234576] = a_6.
% \sigma_{10} = a_6 \sigma_{11} \vartriangleright \sigma_{11} = [1234567] = e.
\end{gather*}
We therefore have
\begin{align*}
\hat \sigma_0 &= \hat a_1 \hat\sigma_1 = 
\hat a_1 \hat a_2 \hat\sigma_2 =
\hat a_1 \hat a_2 \hat\sigma_3 =
\hat a_1 \hat a_2 \hat\sigma_4 =
\hat a_1 \hat a_2 \hat\sigma_5 = \\
&= \hat a_1 \hat a_2 \hat a_2 \hat\sigma_6 =
\hat a_1 \hat a_2 \hat a_2 \hat a_1 \hat\sigma_7 =
\hat\sigma_7 = \hat\sigma_8 = \hat\sigma_9 = \hat a_3 \hat\sigma_{10} = \hat a_3\hat a_6, 
\end{align*}
completing the computation. 
\end{example}

\begin{example}
\label{example:hateta}
We have that
$\eta = a_1a_2a_1a_3a_2a_1\cdots a_na_{n-1}\cdots a_2a_1$
is a reduced word so that $ \acute\eta = \acute a_1\acute a_2\acute a_1\acute a_3\acute a_2\acute a_1
\cdots \acute a_n\acute a_{n-1}\cdots \acute a_2\acute a_1$
and $\hat\eta = (\acute\eta)^2$.
From Lemma \ref{lemma:pihat}, we have $\Pi(\hat\eta) = (-1)^n\;I$ and
\[ \Pi(\acute\eta) =
\begin{pmatrix} & & & \iddots \\ & & 1 & \\
 & -1 & & \\ 1 & & & \end{pmatrix}, \qquad \hat\eta = \begin{cases}
1, & n \equiv 0, 6 \pmod 8, \\
-1, & n \equiv 2, 4 \pmod 8, \\ 
\hat a_1 \hat a_3 \cdots \hat a_n, & n \equiv 1, 7 \pmod 8, \\
-\hat a_1 \hat a_3 \cdots \hat a_n, & n \equiv 3, 5 \pmod 8.
\end{cases} \]
%Notice that, for $n$ odd, we have $\hat a_1 \hat a_3 \cdots \hat a_n=[2143\cdots]$ in the center of $\Quat_{n+1}$.
%Furthermore, consider $\Gamma_0:\RR\to\Spin_{n+1}$ 
%given by $\Gamma_0(t) = \exp(\pi t \fh)$, where
%\begin{equation}
%\label{equation:fh}
%\fh = \sum_{i\in\nmesmo}\sqrt{i(n+1-i)}\; \fa_i \in \so_{n+1}.
%\end{equation}
%A simple computation shows that
%$\Gamma_0(0) = 1$, $\Gamma_0(\frac12) = 
%\acute\eta$, $\Gamma_0(1) = \hat\eta$.
Notice the periodicity modulo eight, 
which also occurs in other contexts. 
\end{example}

\begin{rem}
\label{rem:smalln}
For all $n>3$, there are 
$\sigma,\rho\in S_{n+1}\smallsetminus\{e,\eta\}$ 
such that $\hat\sigma=1$ and $\hat\rho=\hat\eta$: 
take $\sigma = (1,5) = a_1a_2a_3a_4a_3a_2a_1$;
%  = [52341\cdots]$;
we have $\hat\sigma = 1$,
$\longhat(\eta\sigma) = \longhat(\sigma\eta) = \hat\eta$.

No such elements exist in $S_{n+1}$ for $n\in\{2,3\}$.
%\[ n \le 3 \quad\implies\quad \forall \sigma \in S_{n+1}, \;
%(\hat\sigma = 1 \to \sigma = e) \land
%(\hat\sigma = \hat\eta \to \sigma = \eta). \]
%\[ n \ge 4 \quad\implies\quad (\exists \sigma \in S_{n+1}, \;
%(\hat\sigma = 1 \land \sigma \ne e)) \land
%(\exists \sigma \in S_{n+1}, \;
%(\hat\sigma = \hat\eta \land \sigma \ne \eta)). \]
%\end{rem}
%\begin{proof}
%It is enough to consider $\sigma \in S_{PA} < S_{n+1}$.
For $n = 2$ we have $\hat{\sigma}=\pm 1$ if and only if $\sigma\in\{e,\eta\}$, with $\hat{e}=1$ and $\hat\eta = -1$.
For $n = 3$ we have $\hat{\sigma}=\pm 1$ if and only if $\sigma\in\{e,[1432],[3214],[3412]\}$ with $\hat e = 1$,
$\longhat([1432]) = \longhat([3214]) = \longhat([3412]) = -1$. 
Also, $\hat{\sigma}=\pm\hat{\eta}$ if and 
only if $\sigma\in\{[2143],[4123],[2341],\eta\}$, with
$\longhat([2143]) = \longhat([4123]) = 
\longhat([2341]) = \hat a \hat c$,
$\hat\eta = -\hat a\hat c$.
%\end{proof}
\end{rem}

% Write $\sigma_0 \blacktriangleleft \sigma_1$ if $\sigma_0 \vartriangleleft \sigma_1$
% and $j_0 \equiv j_1 \pmod 2$ (where $\sigma_1 = \sigma_0 (j_0j_1)$).
% Thus, if $\sigma_0 \vartriangleleft \sigma_1$,
% $\sigma_0 \blacktriangleleft \sigma_1$ if and only if
% $\mult(\sigma_0) \equiv \mult(\sigma_1) \pmod 2$.

\section{Triangular coordinates}
\label{sect:triangle}

%Let $\Lo_{n+1}^{1}$ be the nilpotent group of real lower triangular matrices with all diagonal entries equal to $1$;
Let $\Up_{n+1}^{+}\subset\GL_{n+1}$ be the group of real upper triangular matrices with all diagonal entries strictly positive.
Recall the $LU$ decomposition:
a matrix $A \in\GL_{n+1}$ can be (uniquely) written as $A = LU$,
$L \in \Lo_{n+1}^{1}$ and $U \in \Up_{n+1}^{+}$
provided each of its northwest minor determinants is positive.
This condition holds in a contractible open neighborhood 
of the identity matrix $I$;
for $A$ in this set, $L$ and $U$ are smoothly and uniquely defined.
We shall be more interested in $\cU_I \subset \SO_{n+1}$,
the intersection of this neighborhood with $\SO_{n+1}$,
which is also a contractible open subset.
Let $\bL: \cU_I \to \Lo_{n+1}^{1}$ take $Q \in \cU_I$
to the unique $L = \bL(Q) \in \Lo_{n+1}^{1}$ such that
there exists $U \in \Up_{n+1}^{+}$ with $Q = LU$:
the map $\bL$ is a diffeomorphism.
Indeed, its inverse $\bQ: \Lo_{n+1}^{1} \to \cU_I$
is given by the orthogonal factor in the $QR$ decomposition:
given $L \in \Lo_{n+1}^{1}$ let $Q = \bQ(L) \in \SO_{n+1}$
be the unique matrix for which there exists $R \in \Up_{n+1}^{+}$
with $L = QR$.

%For $Q_0 \in \SO_{n+1}$, let $\cU_{Q_{0}} = Q_{0} \cU_I$; let $\phi_{Q_0}: \cU_{Q_0} \to \Lo_{n+1}^{1}$ take $Q$ to $L = \bL(Q_0^{-1} Q)$ so that we can write $Q = Q_0LU$, $U \in \Up_{n+1}^{+}$. Conversely, let $\psi_{Q_0}: \Lo_{n+1}^{1} \to \cU_{Q_{0}}$, $\psi_{Q_0}(L) = {Q_0} \bQ(L)$. We have $\psi_{Q_0} = \phi_{Q_0}^{-1}$; the family $(\cU_P,\phi_P)_{P \in \B_{n+1}^{+}}$ is an atlas for $\SO_{n+1}$.

%For each $Q_0 \in \SO_{n+1}$, the set 
The set $\Pi^{-1}[\cU_{I}]\subset\Spin_{n+1}$ has two
contractible connected components: we call them $\cU_{1}$ 
and $\cU_{-1}$, where $1\in \cU_{1}$ and $-1\in \cU_{-1}$.
%Let $\phi_{z_0}: \cU_{z_0} \to \Lo_{n+1}^{1}$ be $\phi_{z_0} = \phi_{Q_0} \circ \Pi$: the family $(\cU_z,\phi_z)_{z \in \widetilde \B_{n+1}^{+}}$ is an atlas for $\Spin_{n+1}$.
We abuse notation and write $\bL: \cU_1 \to \Lo_{n+1}^{1}$
and $\bQ: \Lo_{n+1}^{1} \to \cU_1$ for the diffeomorphisms
obtained by composition.
For $z_0\in\Spin_{n+1}$, we set $\cU_{z_0}=z_0\cU_1$, 
an open contractible neighborhood of $z_0$, diffeomorphic to 
$\Lo^1_{n+1}$ under the map $z\mapsto\bL(z_0^{-1}z)$. 
This map may be seen as a chart,
defining {\em triangular coordinates}
on the open contractible subset $\cU_{z_0} \subset \Spin_{n+1}$.
%In particular, the preimage of the set $\cU_I$ under $\Pi: \Spin_{n+1} \to \SO_{n+1}$ has two contractible components $\cU_{\pm 1}$.

For each $j\in\nmesmo$, let $\fl_j= e_{j+1}e_j^\top\in \lo^1_{n+1}$ be the matrix with only one nonzero entry $(\fl_j)_{j+1,j} = 1$.
Recall that $\fa_j=\fl_j-\fl_j^\top\in\so_{n+1}\approx\spin_{n+1}$. 
%is the matrix with only two nonzero entries given in Equation \ref{equation:fa}. 
% $$ (\fa_i)_{i+1,i} = 1, \qquad (\fa_i)_{i,i+1} = -1. $$
Let $X_{\fa_j}$ and $X_{\fl_j}$ be the left-invariant vector fields
in $\SO_{n+1}$ and $\Lo_{n+1}^{1}$ generated by $\fa_j$ and $\fl_j$,
respectively:
\[ X_{\fa_j}(Q) = Q \fa_j, \qquad X_{\fl_j}(L) = L \fl_j. \]
We also denote by $X_{\fa_j}$ the corresponding left-invariant vector field
in $\Spin_{n+1}$.

\begin{lemma}
\label{lemma:al}
The diffeomorphisms $\bL: \cU_I \to \Lo_{n+1}^{1}$ and
$\bQ: \Lo_{n+1}^{1} \to \cU_I \subset \SO_{n+1}$ take the vector fields
$X_{\fa_j}$ and $X_{\fl_j}$ to smooth positive multiples of each other.
A similar statement holds for
$\bL: \cU_1 \to \Lo_{n+1}^{1}$ and
$\bQ: \Lo_{n+1}^{1} \to \cU_1 \subset \Spin_{n+1}$.
\end{lemma}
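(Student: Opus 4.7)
The plan is to compute the pushforward of $X_{\fa_j}$ under $\bL$ explicitly using the $LU$ decomposition and to read off a smooth positive coefficient in front of $X_{\fl_j}$.

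First I would fix $Q_0 \in \cU_I$ with $LU$ decomposition $Q_0 = L_0 U_0$, $L_0 \in \Lo_{n+1}^{1}$, $U_0 \in \Up_{n+1}^{+}$, and consider the integral curve $Q(t) = Q_0 \exp(t \fa_j)$ of $X_{\fa_j}$ through $Q_0$. For $t$ near $0$, smoothness of the $LU$ decomposition gives $Q(t) = L(t) U(t)$ smoothly with $L(0) = L_0$, $U(0) = U_0$. Differentiating at $t = 0$ and conjugating,
\begin{equation*}
L_0^{-1} L'(0) + U'(0) U_0^{-1} = U_0 \fa_j U_0^{-1}.
\end{equation*}

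The second step is to project onto strictly lower triangular matrices. The left-hand side is a sum of a strictly lower triangular matrix and an upper triangular matrix, so $L_0^{-1} L'(0)$ equals the strictly lower triangular part of $U_0 \fa_j U_0^{-1}$. Since $\fa_j = \fl_j - \fl_j^\transpose$ and the conjugate $U_0 \fl_j^\transpose U_0^{-1}$ of a strictly upper triangular matrix by the upper triangular $U_0$ remains strictly upper triangular, only $U_0 \fl_j U_0^{-1}$ contributes. Its $(i,k)$ entry equals $(U_0)_{i,j+1}(U_0^{-1})_{j,k}$; upper triangularity forces $i \le j+1$ and $k \ge j$, and the inequality $i > k$ for the strictly lower part then forces $i = j+1$, $k = j$. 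Writing $d_i = (U_0)_{ii} > 0$, the only surviving entry is $d_{j+1}/d_j$ at position $(j+1, j)$, so
\begin{equation*}
L_0^{-1} L'(0) = \frac{d_{j+1}}{d_j}\, \fl_j, \qquad L'(0) = \frac{d_{j+1}}{d_j}\, L_0 \fl_j = \frac{d_{j+1}}{d_j}\, X_{\fl_j}(L_0).
\end{equation*}

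The third step is to observe that the coefficient $\lambda(Q_0) = d_{j+1}/d_j$ is smooth and strictly positive in $Q_0 \in \cU_I$, since $U_0 = L_0^{-1} Q_0$ and its positive diagonal entries depend smoothly on $Q_0$. Since $D\bL(Q_0) \cdot X_{\fa_j}(Q_0) = L'(0)$, this says $(\bL)_* X_{\fa_j} = \lambda \, X_{\fl_j}$ pointwise; passing to the inverse diffeomorphism $\bQ$ yields the reverse pushforward statement, with the positive smooth multiplier $1/\lambda$. For the $\Spin_{n+1}$ version, the restriction $\Pi\colon \cU_1 \to \cU_I$ is a diffeomorphism of Lie groups intertwining the left-invariant vector fields $X_{\fa_j}$ on $\Spin_{n+1}$ and $\SO_{n+1}$ (their generators agree under $d\Pi\colon \spin_{n+1} \to \so_{n+1}$), so the assertion transfers verbatim to $\cU_1$.

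The main obstacle is just the triangular bookkeeping in the second step; there is no deeper difficulty, as the identity reduces to a single entry at position $(j+1,j)$. In particular, the argument shows that the positive multiplier is literally the ratio $(U_0)_{j+1,j+1}/(U_0)_{jj}$ of consecutive diagonal entries of the upper triangular factor, which may be useful elsewhere.
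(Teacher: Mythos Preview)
Your proof is correct and follows essentially the same approach as the paper: take an integral curve of $X_{\fa_j}$, differentiate the triangular decomposition, and split into strictly lower and upper triangular parts. The only cosmetic difference is that the paper works with $L = QR$ (so the coefficient comes out as $R_{jj}/R_{j+1,j+1}$) whereas you use $Q = LU$ with $U = R^{-1}$, giving $d_{j+1}/d_j$; these agree.
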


\begin{proof}
Given $Q_{0}\in\cU_{I}$, take a short arc of the 
integral line of $X_{\fa_{j}}$ through $Q_{0}$: 
let $\epsilon>0$ be sufficiently small so that 
$Q(t)=Q_{0}\exp(t\fa_{j})\in\cU_{I}$ for 
$-\epsilon<t<\epsilon$. Also write 
$\bL(Q(t))=L(t)\in\Lo^{1}_{n+1}$, so that 
$L(t)=Q(t)R(t)$ for a smooth path 
$R:(-\epsilon,\epsilon)\to\Up^{+}_{n+1}$. 
Differentiating the last equation, we have
\[(L(t))^{-1}L'(t)=(R(t))^{-1}\fa_{j}R(t)+(R(t))^{-1}R'(t).\]
Since the left hand side is in $\lo^{1}_{n+1}$ 
and the rightmost summand of the right hand side 
is in $\up^{+}_{n+1}$, it is readily seen that 
$L'(t)=(R(t)_{jj}/R(t)_{j+1,j+1})L(t)\fl_{j}$.
\end{proof}

%In particular, taking $t=0$ we have 
%$D\bL(Q)X_{\fa_{i}}(Q)=X_{\fl_{i}}(\bL(Q))$.
%Take $L_0 = Q_0 R_0 \in \Lo_{n+1}^{1}$, with 
%$Q_0\in\SO_{n+1}$ and $R_0\in\Up^{+}_{n+1}$.
%Consider a short arc of integral line $Q: (-\epsilon,+\epsilon) \to \cU_I$,  
%$Q(t) = Q_0 \exp(t \fa_i)$. %so that $(Q(t))^{-1} Q'(t) = \fa_i$,
%or, equivalently, $Q'(t) = X^A_i(Q(t))$.
%Write $Q(t) = L(t) U(t)$ and therefore $L(t) = Q(t) R(t)$,
%where $L: (-\epsilon,+\epsilon) \to \Lo_{n+1}^{1}$,
%$U, R: (-\epsilon,+\epsilon) \to \Up_{n+1}^{+}$
%are smooth.
%Take derivatives to obtain $L'(t) = Q'(t) R(t) + Q(t) R'(t)$;
%take $t = 0$ to obtain $L'(0) = Q'(0) R_0 + Q_0 R'(0)$
%and therefore
%$$ L_0^{-1} L'(0) = R_0^{-1} \fa_i R_0 + R_0^{-1} R'(0). $$
%The only entry of $R_0^{-1} \fa_i R_0$ strictly below the diagonal is 
%$$ (R_0^{-1} \fa_i R_0)_{i+1,i} = ((R_0)_{i+1,i+1})^{-1} (R_0)_{i,i} > 0. %$$
%The matrix $R_0^{-1} R'(0)$ is upper diagonal.
%Since $L_0^{-1} L'(0) \in \lo_{n+1}$, we have
%$L_0^{-1} L'(0) = ((R_0)_{i+1,i+1})^{-1} (R_0)_{i,i} \fl_i$,
%or, equivalently,
%$L'(0) = ((R_0)_{i+1,i+1})^{-1} (R_0)_{i,i} X^L_i(L_0)$,
%as desired.

Recall from the introduction that a locally convex curve is 
an absolutely continuous map   
$\Gamma: J \to \Spin_{n+1}$ such that, for all $t \in J$ 
for which the derivative exists, 
the logarithmic derivative 
$(\Gamma(t))^{-1} \Gamma'(t)\in\spin_{n+1}$ is
%tridiagonal with strictly positive subdiagonal entries, i.e., 
a positive linear combination of $\fa_1,\ldots,\fa_n$. 
%Notice that in this case $\gamma=\Gamma e_{1}: J\to\Ss^{n}$ is locally convex in the sense of the introduction; conversely, if $\gamma:J\to\Ss^{n}$ is smooth and locally convex (in the sense of the introduction), then $\Gamma=\Frenet{\gamma}$ is locally convex (in the sense we just introduced).
%In other works \cite{Saldanha-Shapiro, Saldanha3}, the alternate name \emph{holonomic} was also used to designate this kind of curves. The reason for calling $\gamma=\Gamma e_{1}$ locally convex in the first place is given by Proposition \ref{prop:localconvexity} in Appendix \ref{appendix:convex}.
Similarly, a map $\Gamma: J \to \Lo_{n+1}^{1}$ 
is called a \emph{convex curve} if it is absolutely continuous and, 
for all $t \in J$ for which the derivative exists,
the logarithmic derivative $(\Gamma(t))^{-1} \Gamma'(t)$ %\in\lo^1_{n+1}$  
is a positive linear combination of $\fl_1,\ldots,\fl_n$. %, $1 \le i \le n$.
%of the form 
%\[(\Gamma(t))^{-1} \Gamma'(t)=\sum_{i\in\nmesmo}\beta_i(t)\fl_i,\]
%with $\beta_1,\ldots,\beta_n:J\to(0,+\infty)$. 

\begin{example}
\label{example:fh}
Consider $\fh_L,\fn\in\lo^1_{n+1}$ and 
$\fh\in\so_{n+1}\approx\spin_{n+1}$ given by 
%$\fh = \fh_L-\fh_L^{\transpose} \in \so_{n+1}\approx\spin_{n+1}$, where
%\begin{equation}
%\label{equation:fhL}
% \fh = \sum_{i\in\nmesmo} \sqrt{i(n+1-i)}\; \fa_i \in \so_{n+1}, \qquad
\[
\fh_L = \sum_{j\in\nmesmo} \sqrt{j(n+1-j)}\; \fl_j, \quad
\fn = \sum_{j\in\nmesmo}  \fl_j, \quad
\fh=\fh_L-\fh_L^{\transpose}\]
% =\sum_{j\in\nmesmo} \sqrt{j(n+1-j)}\; \fa_j. 
%\end{equation}
%Consider $\Gamma_0:\RR\to\Spin_{n+1}$ 
%given by $\Gamma_0(t) = \exp(\pi t \fh)$. 
%Straightforward computations show that
%$\Gamma_0(0) = 1$, $\Gamma_0(\frac12) = 
%\acute\eta$, $\Gamma_0(1) = \hat\eta$ (for instance, the eigenvalues of $\fh$ are $(n-2k)i$ for $0\leq k\leq n$). % It also follows from the argument below.
We have
\[ [\fh_L,\fh_L^\transpose] = 
%\begin{pmatrix}
%-n & & & & \\ & 2-n & & & \\ & & \ddots & \\ & & & n-2 & \\ & & & & n
%\end{pmatrix}, 
\sum^n_{k=0}(2k-n)e_{k+1}e^\top_{k+1}, 
\quad
[\fh_L, [\fh_L,\fh_L^\transpose]] = -2 \fh_L, \quad
[\fh_L^\transpose, [\fh_L,\fh_L^\transpose]] = 2 \fh_L^\transpose,
\]
so that $[\fh_L,\fh_L^\transpose] = \diag(-n,-n+2,\cdots,n-2,n)$.
% It is not hard to verify that,
For $n = 1$ and
$\theta \in (-\frac{\pi}{2}, \frac{\pi}{2})$,
\begin{equation}
\label{equation:fhfhL}
\exp(\theta(\fh_L - \fh_L^\transpose)) =
\exp(\tan(\theta) \fh_L)
\exp(\log(\sec(\theta)) [\fh_L,\fh_L^\transpose])
\exp(-\tan(\theta) \fh_L^\transpose).
\end{equation}
The symmetric product induces a Lie algebra homomorphism
$S: \slalgebra_2 \to \slalgebra_{n+1}$ with $S(\fh_L) = \fh_L$
(that is, taking $\fh_L \in \RR^{2\times 2}$ 
to $\fh_L \in \RR^{(n+1)\times (n+1)}$) and
$S(\fh_L^\transpose) = \fh_L^\transpose$.
We therefore also have a Lie group homomorphism 
$S: \widetilde{\SL_2} \to \widetilde{\SL_{n+1}}$, %and $S: \Spin_2 \to \Spin_{n+1}$ with 
$S(\exp(t\fu)) = \exp(t S(\fu))$ for all $\fu \in \slalgebra_2$, $t\in\RR$.
Equation \ref{equation:fhfhL} therefore holds for any value of $n$.
We therefore have $\exp(\theta\fh) \in \cU_1$ for all 
$\theta \in (-\frac{\pi}{2}, \frac{\pi}{2})$, with
\[ \bL(\exp(\theta \fh)) = \exp(\tan(\theta) \fh_L); \qquad
\bQ(\exp(t \fh_L)) = \exp(\arctan(t) \fh). \]

Also, the equation 
$\exp(\frac{\pi}{2}\fh)=\acute{\eta}$, 
which is trivially true for $n=1$, can be obtained 
for arbitrary $n$ using the Lie group homomorphism $S$  
above and noticing that $S(\acute{\eta})=\acute{\eta}$.  

For $z_0 \in \Spin_{n+1}$, the curve 
$\Gamma_{z_0,\fh}(t) = z_0 \exp(t\fh)$
is locally convex and satisfies 
$\Gamma_{z_0,\fh}(\frac{\pi}{2})=z_{0}\acute{\eta}$, 
$\Gamma_{z_0,\fh}(\pi)=z_{0}\hat{\eta}$. 
For $L_0 \in \Lo^{1}_{n+1}$, the curves
$\Gamma_{L_0, \fh_L}(t) = L_0 \exp(t\fh_L)$ 
and $\Gamma_{L_0, \fn}(t) = L_0 \exp(t\fn)$
are convex.
Notice that the $(i,j)$ entry of
either $\Gamma_{L_0, \fh_L}(t)$ or $\Gamma_{L_0, \fn}(t)$
is a polynomial of degree $(i - j)$ in the variable $t$.
\end{example}

One advantage of working with triangular coordinates
is that there is then a simple integration formula.
Indeed, given a convex curve $\Gamma: J \to \Lo_{n+1}^{1}$, 
% $\Gamma(t_0) = I$ and
write $(\Gamma(t))^{-1} \Gamma'(t) = \sum_i \beta_i(t) \fl_i$. 
The positive functions $\beta_1,\ldots,\beta_n:J\to(0,+\infty)$ 
are then integrable in compact subintervals of $J$. 
Fixed $t_0\in J$, we have 
\[ (\Gamma(t))_{i+1,i} = (\Gamma(t_0))_{i+1,i} +
\int_{t_0}^t \beta_i(\tau) d\tau. \]
More generally,
\begin{equation}
\label{equation:explicitGamma}
((\Gamma(t_0))^{-1} \Gamma(t))_{i+l,i} =
\int_{t_0 \le \tau_1 \le \cdots \le \tau_l \le t}
\beta_{i+l-1}(\tau_1) \cdots \beta_{i}(\tau_{l}) d\tau_1 \cdots d\tau_l. 
\end{equation}
%The set of smooth locally convex curves then has an easy generalization. A curve $\Gamma$ is \emph{locally convex} if it is absolutely continuous and its logarithmic derivative $(\Gamma(t))^{-1} \Gamma'(t)$ is almost everywhere a positive linear combination of $\fl_i$, $1 \le i \le n$. In this case the functions $\beta_i: J \to (0,+\infty)$ belong to $L^1(J;\RR)$ and Equation \ref{equation:explicitGamma} above makes perfect sense; Notice that if each $\beta_i$ is in $L^1(J;\RR)$ then $(\beta_{i_1}, \ldots, \beta_{i_l})$ is in $L^1(J;\RR^l)$.
We have, therefore, the following equivalent definition: 
%of convex curves in the group $\Lo^1_{n+1}$: 
a map $\Gamma: [t_0,t_1] \to \Lo_{n+1}^{1}$
is a convex curve if and only if
there exist finite absolutely continuous (positive) Borel measures
$\mu_1, \ldots, \mu_n$ on $J = [t_0,t_1]$ such that, 
for any index $i\in\nmesmo$ and 
for any nondegenerate interval $\tilde J \subseteq J$, 
$\mu_i(\tilde J)>0$, and such that, for $t_0\leq t\leq t_1$, 
\begin{align}
\label{equation:explicitGammamu}
((\Gamma(t_0))^{-1} \Gamma(t))_{i+l,i}
&= (\mu_{i+l-1} \times \cdots \times \mu_i)(\Delta), \\
\Delta &= \{(\tau_1,\ldots,\tau_l) \in [t_0,t]^k \;|\;
t_0 \le \tau_1 \le \cdots \le \tau_l \le t \}. \nonumber
\end{align}
%Similarly, a curve $\Gamma:J\to\Spin_{n+1}$ is \emph{locally convex} if it is absolutely continuous and its logarithmic derivative $(\Gamma(t))^{-1}\Gamma'(t)$ is almost everywhere a positive linear combination of the vectors $\fa_i$, $i\in\nmesmo$. Equivalently, 
It follows from Lemma \ref{lemma:al} that a map 
$\Gamma:J\to\Spin_{n+1}$ is locally convex if and only if, 
near any point $t_\bullet \in J$, there is a system of triangular coordinates 
$\Gamma_L(t)=\bL(z_0^{-1}\Gamma(t))$ with 
$\Gamma_L$ convex in the previous sense. 
The reason for calling curves such as $\Gamma_L$ convex 
is that the space curve $\gamma:J\to\RR^{n+1}$ given by 
$\gamma(t)=\Pi(z_0\bQ(\Gamma_L(t)))e_1$ 
is convex in the geometric sense explained in the introduction. 

Let $\nmaisum^{(k)}$ be the set of subsets
$\bi \subseteq \nmaisum$ with $\operatorname{card}(\bi) = k$;
let $\sum(\bi)$ be the sum of the elements of the set $\bi$.
The $k$-th exterior (or alternating) power $\Lambda^k(\RR^{n+1})$
has a basis indexed by $\bi \in \nmaisum^{(k)}$.
For $\bi_0, \bi_1 \in \nmaisum^{(k)}$,
write:
\[ \bi_0 \overset{j}{\to} \bi_1 \quad \iff \quad
j \in \bi_1, \; j+1 \notin \bi_1, \;
\bi_0 =(\bi_1 \smallsetminus \{j\}) \cup \{j+1\}. \]
Notice that $\bi_0 \overset{j}{\to} \bi_1$ implies
$\sum(\bi_0) = 1+ \sum(\bi_1)$.
With respect to the basis above, the matrix of the 
linear endomorphism %only nonzero entries of 
% begin v3v4
% $\Lambda^k(\fl_i)\in\mathfrak{gl}(\Lambda^{k}(\RR^{n+1}))$ given by 
% \[\Lambda^k(\fl_i)(v_{1}\wedge\cdots\wedge v_{k})
% =\sum_{j\in\llbracket k\rrbracket}v_{1}
% \wedge\cdots\wedge\fl_{i}(v_{j})
% \wedge\cdots\wedge v_{k}\]
% has nonzero entries all equal $1$ and in positions
% $(\bi_0, \bi_1)$ such that $\bi_0 \overset{j}{\to} \bi_1$.
$\Lambda^k(\fl_j)\in\mathfrak{gl}(\Lambda^{k}(\RR^{n+1}))$ given by 
\[\Lambda^k(\fl_j)(v_{1}\wedge\cdots\wedge v_{k})
=\sum_{i\in\llbracket k\rrbracket}v_{1}
\wedge\cdots\wedge\fl_{j}(v_{i})
\wedge\cdots\wedge v_{k}\]
has nonzero entries all equal $1$ and in positions
$(\bi_0, \bi_1)$ such that $\bi_0 \overset{j}{\to} \bi_1$.
% end v3v4
Write $\bi_1 \vartriangleleft \bi_0$ if there exists $j$
such that $\bi_0 \overset{j}{\to} \bi_1$ and
define a partial order in $\nmaisum^{(k)}$
by taking the transitive closure.
Equivalently, for
$\bi_j = \{i_{j1} < i_{j2} < \cdots < i_{jk} \}$
we have
\[ \bi_1 \le \bi_0 \quad\iff\quad
i_{11} \le i_{01}, \; i_{12} \le i_{02}, \; \cdots \;, i_{1k} \le i_{0k}. \]
If $\bi_0 \ge \bi_1$, $\sum(\bi_0) = l + \sum(\bi_1)$, write
\[ \bi_0 \overset{(j_1,\ldots,j_l)}{\longrightarrow} \bi_1 \quad\iff\quad
\exists \bj_0,\ldots,\bj_l, \;
\bi_0 = \bj_0 \overset{j_1}{\to} \bj_1 \to \cdots \to
\bj_{l-1} \overset{j_l}{\to} \bj_l = \bi_1; \]
notice that given $\bi_0$ and $\bi_1$
there may exist many such $l$-tuples $(j_1, \ldots, j_l)$.
Order the indices $\bi$ consistently with the partial order
introduced above (or, more directly, order the subsets $\bi$
increasingly in the sum of their elements).
The matrix $\Lambda^k(\fl_i)$ is then strictly lower triangular.

If $L \in \Lo_{n+1}^{1}$ and $\bi_0, \bi_1 \in \nmaisum^{(k)}$,
define $L_{\bi_0,\bi_1}$ to be the $k \times k$ submatrix of $L$
obtained by selecting the rows in $\bi_0$ and the columns in $\bi_1$.
The $(\bi_0,\bi_1)$ entry of $\Lambda^k(L)$ is 
$\det(L_{\bi_0,\bi_1})$.
Clearly, $\bi_0 \not\ge \bi_1$ implies
$\det(L_{\bi_0,\bi_1}) = 0$;
also, $L_{\bi,\bi}$ is lower triangular with diagonal entries
equal to $1$ and therefore $\det(L_{\bi,\bi}) = 1$.
The matrix $\Lambda^k(L)$ is therefore lower triangular
with diagonal entries equal to $1$.
Furthermore, the map $\Lambda^k: \Lo_{n+1}^{1} \to \Lo_{\binom{n+1}{k}}^1$
is a group homomorphism.
The following result generalizes
Equations \ref{equation:explicitGamma} and \ref{equation:explicitGammamu} above.

\begin{lemma}
\label{lemma:explicitGamma}
Let $\Gamma: [t_0,t_1] \to \Lo_{n+1}^{1}$ be a convex curve 
with $\Gamma(t_0) = L_0$ and let
$\beta_i(t) = ((\Gamma(t))^{-1}\Gamma'(t))_{i+1,i}$,
$\mu_i(J) = \int_J \beta_i(t) dt$.
% be absolutely continuous measures satisfying
% $$ (\Gamma(t))_{i+1,i} = (L_0)_{i+1,i} + \mu_i([0,t]). $$
Let $\bi_0, \bi_1 \in \nmaisum^{(k)}$
with $\bi_0 \ge \bi_1$ and $l = \sum (\bi_0) - \sum (\bi_1)$.
Then
\begin{align*}
\det((L_0^{-1} \Gamma(t))_{\bi_0,\bi_1})
&= (\Lambda^k(L_0^{-1} \Gamma(t)))_{\bi_0,\bi_1} 
= \sum_{\bi_0 \overset{(j_1, \ldots, j_l)}{\longrightarrow} \bi_1}
(\mu_{j_1} \times \cdots \times \mu_{j_l})(\Delta); \\
\Delta &= \{(\tau_1,\ldots,\tau_l) \in [t_0,t]^l \;|\;
t_0 \le \tau_1 \le \cdots \le \tau_l \le t \}. 
\end{align*}
\end{lemma}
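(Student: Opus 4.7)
The first equality $\det((L_0^{-1}\Gamma(t))_{\bi_0,\bi_1})=(\Lambda^k(L_0^{-1}\Gamma(t)))_{\bi_0,\bi_1}$ is just the definition of the matrix $\Lambda^k(L_0^{-1}\Gamma(t))$ recalled in the paragraph preceding the lemma; so the real content is the iterated-integral formula for this entry. My plan is to derive it from the ODE satisfied by $M(t):=\Lambda^k(L_0^{-1}\Gamma(t))$ via Picard iteration, exactly mimicking the argument that leads to Equation \ref{equation:explicitGamma} but one level up, in the $k$-th exterior power.

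First, since $\Lambda^k:\Lo_{n+1}^1\to\Lo_{\binom{n+1}{k}}^1$ is a group homomorphism and $L_0^{-1}\Gamma(t)$ is a convex curve in $\Lo_{n+1}^1$ with logarithmic derivative $\sum_i\beta_i(t)\fl_i$, differentiating gives
\[
M'(t)\;=\;M(t)\cdot\Lambda^k_\ast\!\Bigl(\sum_i\beta_i(t)\fl_i\Bigr)\;=\;M(t)\sum_{i\in\nmesmo}\beta_i(t)N_i,\qquad M(t_0)=I,
\]
where $N_i:=\Lambda^k(\fl_i)$ is the matrix described just before the lemma: its entries are $0$ or $1$, with $(N_i)_{\bi',\bi_1}=1$ exactly when $\bi'\overset{i}{\to}\bi_1$. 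In particular each $N_i$ is strictly lower triangular with respect to the chosen ordering of $\nmaisum^{(k)}$, and $M(t)$ is lower triangular with $1$s on the diagonal.

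Next, I would prove the formula by induction on $l=\sum(\bi_0)-\sum(\bi_1)\ge 0$. The base case $l=0$ forces $\bi_0=\bi_1$; both sides equal $1$ (the single empty chain contributes the empty iterated integral, which is $1$). For the inductive step, integrate the $(\bi_0,\bi_1)$ component of $M'(t)=M(t)\sum_i\beta_i(t)N_i$ from $t_0$ to $t$. Using $M_{\bi_0,\bi_1}(t_0)=[\bi_0=\bi_1]=0$ (since $l\ge 1$) and expanding the matrix product,
\[
M_{\bi_0,\bi_1}(t)\;=\;\int_{t_0}^{t}\sum_{i}\beta_i(\tau_l)\sum_{\bi'\,:\,\bi'\overset{i}{\to}\bi_1}M_{\bi_0,\bi'}(\tau_l)\,d\tau_l.
\]
Each $\bi'$ appearing here satisfies $\sum(\bi_0)-\sum(\bi')=l-1$, so the inductive hypothesis applies to $M_{\bi_0,\bi'}(\tau_l)$. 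Substituting the inductive formula and setting $j_l:=i$ converts the outer sum-and-integral into a sum over all chains $\bi_0=\bj_0\overset{j_1}{\to}\bj_1\to\cdots\overset{j_l}{\to}\bj_l=\bi_1$, with the iterated integral now running over $t_0\le\tau_1\le\cdots\le\tau_l\le t$. This is precisely the claimed formula once one recognizes the iterated integral as $(\mu_{j_1}\times\cdots\times\mu_{j_l})(\Delta)$.

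The argument is essentially bookkeeping, so the main (minor) obstacle is just being careful that the $\Lambda^k$ of the logarithmic derivative is indeed $\sum_i\beta_i(t)N_i$ (which follows from $\Lambda^k$ being a Lie group homomorphism together with the explicit description of $\Lambda^k(\fl_j)$ given just before the lemma), and that the ordering on $\nmaisum^{(k)}$ is compatible with lower-triangularity so that the Picard iteration terminates after exactly $l$ steps on the $(\bi_0,\bi_1)$ entry. No analytical subtlety arises because the $\beta_i$ are locally integrable and all sums are finite.
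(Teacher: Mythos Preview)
Your proof is correct and is precisely the kind of computation the paper has in mind: the authors' proof consists of the single sentence ``These are straightforward computations,'' and your induction on $l$ via the ODE $M'(t)=M(t)\sum_i\beta_i(t)\Lambda^k(\fl_i)$ is the natural way to carry out those computations.
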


\begin{proof}
These are straightforward computations.
\end{proof}

% \begin{lemma}
% \label{lemma:surjectivederivative}
% Consider a holonomic curve $\Gamma_0: [0,1] \to \Lo_{n+1}^1$
% with $\Gamma_0(0) = L_0$, $\Gamma_0(1) = L_1$
% and logarithmic derivative $\Lambda_0: [0,1] \to \lo_{n+1}$,
% $\Lambda_0(t) = (\Gamma_0(t))^{-1} \Gamma'_0(t)$.

% Consider smooth positive bump functions $u_i: [0,1] \to \RR$
% whose supports are disjoint intervals $J_i \subset (0,1)$.
% For $A \in \RR^{(n+1)^2}$ let
% $\Gamma_A: [0,1] \to \Lo_{n+1}^1$ be the holonomic curve
% with $\Gamma_A(0) = L_0$ and logarithmic derivative
% $$ \Lambda_A(t) = \Lambda_0(t) + \sum_{i,j} u_i(t) \fl_j. $$
% Let $\Phi: \RR^{(n+1)^2} \to \Lo_{n+1}^1$, $\Phi(A) = \Gamma_A(1)$.
% Then the derivative $D\Phi(0)$ is surjective.
% \end{lemma}

% \begin{proof}
% \end{proof}

\section{Totally positive matrices}
\label{sect:totallypositive}

A matrix $L \in \Lo_{n+1}^{1}$ is \emph{totally positive} if
for all $k \in \nmaisum$ and for all
indices $\bi_0, \bi_1 \in \nmaisum^{(k)}$,
\[ \bi_0 \ge \bi_1 \quad\implies\quad \det(L_{\bi_0,\bi_1}) > 0. \]
Let $\Pos_{\eta} \subset \Lo_{n+1}^{1}$
be the set of totally positive matrices.
In the notation of \cite{Berenstein-Fomin-Zelevinsky}, 
$G=N=\Up^{1}_{n+1}$ and 
$N_{>0}=\Pos_{\eta}^{\transpose}$. 

For each $j\in\nmesmo$, 
let $\jacobi_j(t) = \exp(t \fl_j)$: 
for any reduced word
$\eta = a_{i_1}a_{i_2}\cdots a_{i_m}$, $m = \inv(\eta) = n(n+1)/2$,
the map
\[ (0,+\infty)^m \to \Pos_{\eta}, \qquad
(t_1, t_2, \ldots, t_m) \mapsto
\jacobi_{i_1}(t_1)\jacobi_{i_2}(t_2) \cdots \jacobi_{i_m}(t_m) \]
is a diffeomorphism.
Moreover, there exists a stratification 
of its closure $\overline{\Pos_{\eta}}$:
\[ \overline{\Pos_{\eta}} =
\{ L \in \Lo_{n+1}^{1} \;|\;
\forall \bi_0, \bi_1,
\; ((\bi_0 \ge \bi_1) \to (\det(L_{\bi_0,\bi_1}) \ge 0)) \}
= \bigsqcup_{\sigma \in S_{n+1}} \Pos_{\sigma}; \]
$\Pos_\sigma \subset \Lo_{n+1}^{1}$ is a smooth manifold of dimension $\inv(\sigma)$,
and if $\sigma = a_{i_1}\cdots a_{i_k}$ is a reduced word
(so that $k = \inv(\sigma)$)
then the map
\[(0,+\infty)^k \to \Pos_{\sigma}, \qquad
(t_1, t_2, \ldots, t_k) \mapsto
\jacobi_{i_1}(t_1)\jacobi_{i_2}(t_2) \cdots \jacobi_{i_k}(t_k)
\]
is a diffeomorphism.
Equivalently, if $\sigma_1 \vartriangleleft \sigma_0 = \sigma_1 a_{i_k}$ then the map
\begin{equation}
\label{equation:Possigma}
\Pos_{\sigma_1} \times (0,+\infty) \to \Pos_{\sigma_0}, \qquad
(L,t_k) \mapsto L \jacobi_{i_k}(t_k)
\end{equation}
is a diffeomorphism.

Different reduced words yield different diffeomorphisms
but the same set $\Pos_\sigma$:
the equation
\begin{equation}
\label{equation:ababab}
\jacobi_{i}(t_1) \jacobi_{i+1}(t_2) \jacobi_i(t_3) =
\jacobi_{i+1}\left(\frac{t_2t_3}{t_1+t_3}\right) \jacobi_i(t_1+t_3) 
\jacobi_{i+1}\left(\frac{t_1t_2}{t_1+t_3}\right) 
\end{equation}
provides the transition between adjacent parameterizations
(i.e., between reduced words connected by the local move
in Equation \ref{equation:reducedword2};
the local move in Equation \ref{equation:reducedword1}
corresponds to a mere relabeling).

In general,
the sets $\Pos_{\sigma} \subset \Lo_{n+1}^{1}$ are 
neither subgroups nor semigroups and should not be confused
with the subgroups
$\Lo_\sigma = \Lo_{n+1}^1 \cap (P_\sigma^{-1} \Up_{n+1}^1 P_\sigma)$ of Equation \ref{equation:Upsigma}.
For instance, $\Pos_{e} = \{I\}$ consists of a single point and
$\Pos_{a_i} = \{ \jacobi_i(t), t > 0 \}$ is an open half line;
in this case, $\Pos_{a_i} \subset \Lo_{a_i}$.
For $n = 2$ and
\begin{equation}
\label{equation:Lxyz}
L(x,y,z) =
\begin{pmatrix} 1 & 0 & 0 \\ x & 1 & 0 \\ z & y & 1 \end{pmatrix} 
\end{equation}
we have
\begin{gather*}
\Pos_{ab} = \{ L(x,y,0) \;|\; x, y > 0 \}; \qquad
\Pos_{ba} = \{ L(x,y,xy) \;|\; x, y > 0 \}; \\
\Pos_{aba} = \{ L(x,y,z) \;|\; x, y > 0; \; 0 < z < xy \}.
\end{gather*}
On the other hand,
\[ \Lo_{ab} = \{ L(x,0,z) \;|\; x, z \in \RR \}; \qquad
\Lo_{ba} = \{ L(0,y,z) \;|\; y, z \in \RR \}. \]
If $L \in \Pos_{\sigma}$ then there exist matrices $U_1, U_2 \in \Up_{n+1}$
such that $L = U_1 P_{\sigma} U_2$;
in other words, $\Pos_{\sigma} \subseteq \bQ^{-1}[\Bru_\sigma]$.
The converse is not at all true,
not even if we pay attention to signs of diagonal entries
of the matrices $U_i$.
In \cite{Shapiro-Shapiro-Vainshtein1, Shapiro-Shapiro-Vainshtein2} 
it is shown that the set of matrices
which admit such a decomposition is almost always disconnected;
each cell $\Pos_{\sigma}$ is contractible,
and so is its closure $\overline{\Pos_\sigma}$;
see also Lemma \ref{lemma:posbruhat} below.

\begin{lemma}
\label{lemma:possigma}
Consider $\sigma \in S_{n+1}$, $k \in \nmaisum$ and 
indices $\bi_0, \bi_1 \in \nmaisum^{(k)}$.
If there exists $\sigma_1 = a_{j_1}\cdots a_{j_{l_1}} \le \sigma$,
$\inv(\sigma_1) = l_1$, such that
$\bi_0 \overset{(j_1,\ldots,j_{l_1})}{\longrightarrow} \bi_1$
then, for all $L \in \Pos_\sigma$,
$(\Lambda^k(L))_{\bi_0,\bi_1} > 0$.
Conversely, if no such $\sigma_1$ exists 
then, for all $L \in \Pos_\sigma$,
$(\Lambda^k(L))_{\bi_0,\bi_1} = 0$.
\end{lemma}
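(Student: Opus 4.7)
The plan is to parametrize $L \in \Pos_\sigma$ using a chosen reduced word $\sigma = a_{i_1}\cdots a_{i_m}$ as $L = \jacobi_{i_1}(t_1)\cdots \jacobi_{i_m}(t_m)$ with $t_1,\ldots,t_m > 0$, and to compute the $(\bi_0,\bi_1)$-entry of $\Lambda^k(L)$ as a polynomial in the $t_r$. Because $\fl_j^2 = 0$, the operator $\Lambda^k(\fl_j)$ on $\Lambda^k(\RR^{n+1})$ also satisfies $\Lambda^k(\fl_j)^2 = 0$ (an arrow $\bi_0 \overset{j}{\to} \bi_1$ strictly lowers $\sum(\bi)$ by one and cannot be iterated), so $\Lambda^k(\jacobi_{i_r}(t_r)) = I + t_r\,\Lambda^k(\fl_{i_r})$. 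Since $\Lambda^k$ is a group homomorphism, expanding the product gives
\[
(\Lambda^k(L))_{\bi_0,\bi_1} = \sum_{S = \{r_1<\cdots<r_l\}\subseteq \{1,\ldots,m\}} t_{r_1}\cdots t_{r_l}\cdot \bigl[\bi_0 \overset{(i_{r_1},\ldots,i_{r_l})}{\longrightarrow} \bi_1\bigr],
\]
a sum of non-negative monomials. Positivity of the entry is therefore equivalent to the existence of at least one subword of $(i_1,\ldots,i_m)$ realizing a path $\bi_0 \to \bi_1$, and the entry vanishes otherwise.

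A key auxiliary observation, to be verified from the definition of the arrows, is that any word $(j_1,\ldots,j_l)$ realizing a path $\bi_0 \to \bi_1$ is automatically a reduced expression for the permutation $a_{j_1}\cdots a_{j_l}$, with $\inv(a_{j_1}\cdots a_{j_l}) = l = \sum(\bi_0) - \sum(\bi_1)$. This follows from a particle-tracking picture of paths: the $k$ elements of $\bi_0$ behave as non-crossing, monotonically decreasing labeled particles, each step $a_{j_r}$ moving exactly one particle one unit down; moreover, a direct check shows that neither of the braid patterns $(j,j+1,j)$ nor $(j+1,j,j+1)$ can occur as three consecutive steps of any path. Combined with the Bruhat subword property, this identifies the permutations arising from path-realizing reduced subwords of $(i_1,\ldots,i_m)$ as exactly the $\sigma_1 \leq \sigma$.

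Direction~(2) is now immediate: if no $\sigma_1 \leq \sigma$ admits a path-realizing reduced word, then every indicator in the expansion vanishes and $(\Lambda^k(L))_{\bi_0,\bi_1} = 0$. For direction~(1), I would argue by induction on $\inv(\sigma)$, with the trivial base $\sigma = e$. In the inductive step, pick any right descent $a_i$ of $\sigma$, set $\rho = \sigma a_i \vartriangleleft \sigma$, and use Equation~\ref{equation:Possigma} to write $L = L'\jacobi_i(t)$ with $L' \in \Pos_\rho$ and $t > 0$. Expanding yields
\[
(\Lambda^k(L))_{\bi_0,\bi_1} = (\Lambda^k(L'))_{\bi_0,\bi_1} + t\cdot (\Lambda^k(L'))_{\bi_0,\bi''},
\]
with $\bi'' = (\bi_1 \smallsetminus \{i\}) \cup \{i+1\}$ when $i \in \bi_1$ and $i+1 \notin \bi_1$, and the second summand absent otherwise. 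The Lifting Property of the Bruhat order ensures either $\sigma_1 \leq \rho$, in which case the inductive hypothesis applied to $L'$ makes the first summand positive, or $\sigma_1 a_i \leq \rho$ with $\sigma_1 a_i \vartriangleleft \sigma_1$, in which case one reorders the given reduced word for $\sigma_1$ so it ends in $a_i$, extracts a reduced word for $\sigma_1 a_i$ realizing $\bi_0 \to \bi''$, and applies induction to the second summand.

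The main obstacle is this reordering step in the second case: showing that whenever a reduced word of $\sigma_1$ realizes $\bi_0 \to \bi_1$ and $a_i$ is a right descent of $\sigma_1$ with $i \in \bi_1$ and $i+1 \notin \bi_1$, some reduced word for $\sigma_1$ ending in $a_i$ also realizes this path. I expect this to follow from the commutation freedom within path-realizing reduced words --- exchanging two adjacent moves whose indices differ by more than $1$ preserves both reducedness and path-realization --- combined with the observation that the last step of any realized path must use a right descent of $\sigma_1$ satisfying exactly the set-theoretic conditions on $\bi_1$ noted above.
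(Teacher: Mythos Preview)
Your expansion of $(\Lambda^k(L))_{\bi_0,\bi_1}$ as a sum of nonnegative monomials indexed by path-realizing subwords of $(i_1,\ldots,i_m)$ is correct and is exactly the computation underlying the paper's proof. Direction~(2) then follows just as in the paper: a positive entry forces some subword to realize a path, and that subword is a reduced expression for some $\sigma_1\le\sigma$.

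There is, however, a genuine gap in your justification of the auxiliary observation that every path-realizing word is reduced. Ruling out $(j,j)$, $(j,j{+}1,j)$ and $(j{+}1,j,j{+}1)$ as three \emph{consecutive} steps does not suffice: reducedness is not characterized by the absence of such local patterns, since commutation moves can expose a braid pattern that was not initially consecutive. Your particle picture is the right idea, but it needs to be pushed one step further. Regard the elements of the moving set as $k$ particles and the complementary positions as $n{+}1{-}k$ holes. Each arrow $\overset{j}{\to}$ swaps the particle at position $j{+}1$ with the hole at position $j$; particles move only downward, holes only upward, and within each species the relative order is preserved. In the wiring diagram of $a_{j_1}\cdots a_{j_l}$ this means: two particle strands never cross, two hole strands never cross, and a given particle--hole pair crosses at most once (after one swap the particle is forever below that hole). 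Hence no two strands cross twice, and the word is reduced. An immediate consequence is that any such $\sigma_1$ is fully commutative, so \emph{every} reduced word for $\sigma_1$ realizes the same path $\bi_0\to\bi_1$.

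This last consequence dissolves your ``main obstacle'' in direction~(1): once $a_i$ is a right descent of $\sigma_1$, the reduced word for $\sigma_1$ ending in $a_i$ automatically realizes $\bi_0\to\bi_1$, so its prefix realizes $\bi_0\to\bi''$. Your inductive argument then goes through. That said, the paper's route for direction~(1) is shorter and avoids the Lifting Property entirely: by the subword characterization of Bruhat order, \emph{some} reduced word for $\sigma_1$ occurs as a subword $(i_{x_1},\ldots,i_{x_{l_1}})$ of $(i_1,\ldots,i_m)$; by full commutativity it too realizes the path; hence the monomial $t_{x_1}\cdots t_{x_{l_1}}$ already appears in your expansion, and the entry is positive.
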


\begin{proof}
Write a reduced word $\sigma = a_{i_1}\cdots a_{i_l}$.
Assume first that such $\sigma_1$ exists
and that $j_1 = i_{x_1}, \ldots, j_{l_1} = i_{x_{l_1}}$
(where of course $1 \le x_1 < \cdots < x_{l_1} \le l$).
Set 
\[ \bi_0 = \bj_0 \overset{j_1}{\to} \bj_1 \to \cdots \to
\bj_{l_{1}-1} \overset{j_{l_1}}{\to} \bj_{l_1} = \bi_1; \qquad
L = \jacobi_{i_1}(t_1) \cdots \jacobi_{i_{l}}(t_{l})
\in \Pos_\sigma. \]
We have $ (\Lambda^k(L))_{\bi_0,\bi_1} \ge t_{x_1} \cdots t_{x_{l_1}} > 0 $,
as desired.

Conversely, assume that
$L = \jacobi_{i_1}(t_1) \cdots \jacobi_{i_{l}}(t_{l})$,
$(\Lambda^k(L))_{\bi_0,\bi_1} > 0$.
We have 
\[ (\Lambda^k(L))_{\bi_0,\bi_1} = \sum_{\bi_0 =
\bj_0 \ge \cdots \ge \bj_{l} = \bi_1}
\left( (\Lambda^k(\jacobi_{i_1}(t_1)))_{\bj_0,\bj_1} \cdots
(\Lambda^k(\jacobi_{i_{l}}(t_{l}))
)_{\bj_{l - 1},\bj_{l}} \right).  \]
Consider $(\bj_0, \ldots, \bj_{l})$ such that the above product
is positive.
Let $x_1, \ldots, x_{l_1}$ be such that
$\bj_{x_1-1} > \bj_{x_1},\ldots, \bj_{x_{l_1}-1} > \bj_{x_{l_1}}$:
this obtains a reduced word for $\sigma_1$.
\end{proof}

\begin{lemma}
\label{lemma:positivesemigroup}
Consider $L_0, L_1 \in \Lo_{n+1}^{1}$.
If $L_0 \in \Pos_{\sigma_0}$ and $L_1 \in \Pos_{\sigma_1}$
then $L_0L_1 \in \Pos_{\sigma_0\vee\sigma_1}$.
Thus, $\Pos_{\sigma_0}\Pos_{\sigma_1} = \Pos_{\sigma_0 \vee \sigma_1}$.

In particular, 
if $L_0 \in \Pos_{\eta}$ and $L_1 \in \overline{\Pos_{\eta}}$
then $L_0L_1, L_1L_0 \in \Pos_{\eta}$.
If $L_0 \in \overline{\Pos_{\eta}}$ and $L_1 \in \overline{\Pos_{\eta}}$
then $L_0L_1 \in \overline{\Pos_{\eta}}$.
\end{lemma}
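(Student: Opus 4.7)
The plan is to prove $\Pos_{\sigma_0}\Pos_{\sigma_1}=\Pos_{\sigma_0\vee\sigma_1}$ by induction on $\inv(\sigma_1)$, relying on the recursive characterization of $\vee$ from Section~\ref{sect:symmetric}. The base $\sigma_1=e$ is immediate from $\Pos_e=\{I\}$ and $\sigma_0\vee e=\sigma_0$. For the inductive step, write $\sigma_1=\sigma_1'a_j$ with $\sigma_1'\vartriangleleft\sigma_1$; the diffeomorphism in Equation~\ref{equation:Possigma} gives $\Pos_{\sigma_1}=\Pos_{\sigma_1'}\cdot\{\jacobi_j(t):t>0\}$, so the inductive hypothesis reduces matters to the auxiliary identity
\[\Pos_\tau\cdot\{\jacobi_j(t):t>0\}=\Pos_{\tau\vee a_j}\qquad(\tau=\sigma_0\vee\sigma_1'),\]
which matches $\sigma_0\vee\sigma_1=(\sigma_0\vee\sigma_1')\vee a_j$.

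The auxiliary identity splits into two cases, exactly as in the definition of $\vee$. If $\tau\vartriangleleft\tau a_j$, then $\tau\vee a_j=\tau a_j$, and Equation~\ref{equation:Possigma}, applied to the reduced word for $\tau$ extended by $a_j$, supplies the required diffeomorphism $\Pos_\tau\times(0,+\infty)\to\Pos_{\tau a_j}$. If instead $\tau a_j\vartriangleleft\tau$, then $\tau\vee a_j=\tau$, and now $\tau$ admits a reduced word ending in $a_j$, so every $L\in\Pos_\tau$ factors as $L=L''\jacobi_j(s)$ with $L''\in\Pos_{\tau a_j}$ and $s>0$. The one-parameter subgroup identity $\jacobi_j(s)\jacobi_j(t)=\jacobi_j(s+t)$ then yields both inclusions: for the forward direction, $L\jacobi_j(t)=L''\jacobi_j(s+t)\in\Pos_\tau$; for the reverse, pick any $0<t<s$ and write $L=(L''\jacobi_j(s-t))\cdot\jacobi_j(t)\in\Pos_\tau\cdot\{\jacobi_j(t):t>0\}$.

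The two ``in particular'' claims follow easily. Because $\Inv(\eta)\subseteq\Inv(\sigma)$ forces $\sigma=\eta$, we have $\eta\vee\sigma_1=\sigma_0\vee\eta=\eta$, so the main equality gives $L_0L_1,L_1L_0\in\Pos_\eta$ whenever one factor lies in $\Pos_\eta$ and the other in some $\Pos_{\sigma}\subseteq\overline{\Pos_\eta}$, using the stratification $\overline{\Pos_\eta}=\bigsqcup_\sigma\Pos_\sigma$ recorded just above Lemma~\ref{lemma:possigma}. That same stratification also yields $L_0L_1\in\Pos_{\sigma_0\vee\sigma_1}\subseteq\overline{\Pos_\eta}$ in the closed case. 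The main obstacle is the absorbing case $\tau a_j\vartriangleleft\tau$ of the auxiliary identity: here one must exploit the non-uniqueness of reduced words (via the moves in Equations~\ref{equation:reducedword1}, \ref{equation:reducedword2}, and ultimately the transition formula~\ref{equation:ababab}) to secure a factorization of $L\in\Pos_\tau$ ending in $\jacobi_j$; once this is available, the one-parameter subgroup law $\jacobi_j(s)\jacobi_j(t)=\jacobi_j(s+t)$ closes the argument cleanly.
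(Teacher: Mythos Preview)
Your proof is correct and follows essentially the same route as the paper: induction on $\inv(\sigma_1)$, with the key step being the auxiliary identity $\Pos_\tau\cdot\{\jacobi_j(t):t>0\}=\Pos_{\tau\vee a_j}$, handled in the two cases $\tau\vartriangleleft\tau a_j$ (direct from Equation~\ref{equation:Possigma}) and $\tau a_j\vartriangleleft\tau$ (choose a reduced word for $\tau$ ending in $a_j$ and use $\jacobi_j(s)\jacobi_j(t)=\jacobi_j(s+t)$). The paper compresses the induction step to ``now easy'' after treating $\inv(\sigma_1)=1$, and also offers an alternative direct argument for the ``in particular'' claims via the Cauchy--Binet expansion of $(\Lambda^k(L_0L_1))_{\bi_0\bi_1}$; you instead derive these from the main identity and the stratification, which is equally valid.
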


The operation $\vee$ is the one in Example \ref{example:vee}.

\begin{proof}
The first claim can be proved by induction on $l = \inv(\sigma_1)$;
the case $l = 0$ is trivial.
For the case $l = 1$, consider $\sigma_1 = a_i$ and two cases.
If $\sigma_0 \vee a_i = \sigma_0$, we take a reduced word
$\sigma_0 = a_{i_1}\cdots a_{i_k}$ with $i_k = i$. Then
\[ L_0L_1 = (\lambda_{i_1}(t_1) \cdots \lambda_{i_k}(t_k))
\lambda_{i}(t) = \lambda_{i_1}(t_1) \cdots \lambda_{i_k}(t_k + t)
\in \Pos_{\sigma_0}. \]
The case $\sigma_0 \vee a_i \ne \sigma_0$ is even more direct.
The induction step is now easy.

The other claims follow from the first, 
but a direct proof may be instructive:
consider $\bi_0, \bi_1 \in \nmaisum^{(k)}$, $\bi_0 \ge \bi_1$.
If $L_0 \in \Pos_{\eta}$ and $L_1 \in \overline{\Pos_{\eta}}$ we have
\[ (\Lambda^k(L_0L_1))_{\bi_0\bi_1} =
(\Lambda^k(L_0))_{\bi_0\bi_1} (\Lambda^k(L_1))_{\bi_1\bi_1} +
\sum_{\bi_0 \ge \bi > \bi_1}
(\Lambda^k(L_0))_{\bi_0\bi} (\Lambda^k(L_1))_{\bi\bi_1} > 0, \]
as desired; the other cases are similar.
\end{proof}

Write $L_0 \le L_1$ if $L_0^{-1} L_1 \in \overline{\Pos_{\eta}}$
and $L_0 \ll L_1$ if $L_0^{-1} L_1 \in \Pos_{\eta}$;
notice that $L_0^{-1} L_1 \in \Pos_{\eta}$ is in general
not equivalent to $L_1 L_0^{-1} \in \Pos_{\eta}$.
Lemma \ref{lemma:positivesemigroup} implies that these are partial orders:
\begin{equation}
\label{equation:positivesemigroup}
L_0 \le L_1 \le L_2 \;\implies\; L_0 \le L_2; \qquad
L_0 \le L_1 \ll L_2 \;\implies\; L_0 \ll L_2.
\end{equation}

\begin{lemma}
\label{lemma:totallypositive}
Consider $L_0, L_1 \in \Lo_{n+1}^{1}$.
We have that $L_0 \ll L_1$ if and only if
there exists a convex curve $\Gamma: [0,1] \to \Lo_{n+1}^{1}$
with $\Gamma(0) = L_0$ and $\Gamma(1) = L_1$.
% If there exists a holonomic curve $\Gamma: [0,1] \to \Lo_{n+1}^{1}$
% with $\Gamma(0) = L_0$ and $\Gamma(1) = L_1$ then $L_0 \ll L_1$.
\end{lemma}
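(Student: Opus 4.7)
For the direction ($\Leftarrow$): Suppose $\Gamma:[0,1]\to\Lo_{n+1}^1$ is a convex curve with $\Gamma(0)=L_0$, $\Gamma(1)=L_1$, and let $\mu_j$ ($j\in\nmesmo$) denote the absolutely continuous positive measures of Equation \ref{equation:explicitGammamu} with strictly positive densities $\beta_j=(\Gamma^{-1}\Gamma')_{j+1,j}$. For any $\bi_0,\bi_1\in\nmaisum^{(k)}$ with $\bi_0\ge\bi_1$ and $l=\sum(\bi_0)-\sum(\bi_1)$, Lemma \ref{lemma:explicitGamma} gives
\[\det\bigl((L_0^{-1}L_1)_{\bi_0,\bi_1}\bigr)=\sum_{\bi_0\overset{(j_1,\ldots,j_l)}{\longrightarrow}\bi_1}(\mu_{j_1}\times\cdots\times\mu_{j_l})(\Delta).\]
At least one descent chain exists (by definition of $\bi_0\ge\bi_1$), and each product measure is strictly positive, so this minor is strictly positive; for $\bi_0=\bi_1$ it equals $1$. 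Hence $L_0^{-1}L_1\in\Pos_\eta$, i.e., $L_0\ll L_1$.

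For the direction ($\Rightarrow$): Left-translating by $L_0$, it suffices to construct a convex curve from $I$ to an arbitrary $L'\in\Pos_\eta$. Fix a reduced word $\eta=a_{i_1}\cdots a_{i_m}$ and the unique positive factorization $L'=\jacobi_{i_1}(t_1^*)\cdots\jacobi_{i_m}(t_m^*)$. Choose smooth bump functions $\phi_k:[0,1]\to[0,\infty)$, one per factor, with pairwise disjoint supports $\operatorname{supp}\phi_k\subset((k-1)/m,k/m)$ and $\int_0^1\phi_k(s)\,ds=1$. For $\epsilon\ge 0$ and $T=(T_1,\ldots,T_m)\in(0,\infty)^m$, let $\Gamma^{\epsilon,T}:[0,1]\to\Lo_{n+1}^1$ be the solution of
\[\Gamma(0)=I,\qquad \Gamma^{-1}\Gamma'(s)=\sum_{j\in\nmesmo}\biggl(\epsilon+\sum_{k:\,i_k=j}T_k\,\phi_k(s)\biggr)\fl_j,\]
so that $\Gamma^{\epsilon,T}$ is a convex curve for every $\epsilon>0$. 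Set $\Psi(\epsilon,T)=\Gamma^{\epsilon,T}(1)$.

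At $\epsilon=0$ all controls vanish outside $\bigcup_k\operatorname{supp}\phi_k$, and on $\operatorname{supp}\phi_k$ only the $\fl_{i_k}$ direction is active with total impulse $T_k$; integrating piecewise therefore yields $\Psi(0,T)=\jacobi_{i_1}(T_1)\cdots\jacobi_{i_m}(T_m)$. This is precisely the reduced-word diffeomorphism $(0,\infty)^m\to\Pos_\eta$, satisfying $\Psi(0,T^*)=L'$ and having invertible differential $D_T\Psi(0,T^*)$. Smooth dependence of the ODE flow on parameters makes $\Psi$ smooth near $(0,T^*)$, so the implicit function theorem furnishes $\epsilon_0>0$ and a smooth curve $\epsilon\mapsto T(\epsilon)$ with $T(0)=T^*$ and $\Psi(\epsilon,T(\epsilon))=L'$ for all $\epsilon\in[0,\epsilon_0)$. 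Picking any $\epsilon\in(0,\epsilon_0)$ yields the desired convex curve $\Gamma^{\epsilon,T(\epsilon)}$ from $I$ to $L'$.

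The main obstacle is the second direction, specifically the identification $\Psi(0,T)=\jacobi_{i_1}(T_1)\cdots\jacobi_{i_m}(T_m)$ together with the smoothness of $\Psi$ up to $\epsilon=0$: the limiting flow just fails to be convex (some controls vanish on subintervals), so the parameter $\epsilon$ must be handled with care. Both facts follow from explicit integration on each support $\operatorname{supp}\phi_k$ combined with standard smooth-dependence-on-parameters results for ODEs.
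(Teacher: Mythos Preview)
Your argument is correct. The $(\Leftarrow)$ direction coincides with the paper's: both invoke Lemma~\ref{lemma:explicitGamma} to show every minor of $L_0^{-1}L_1$ is strictly positive. For $(\Rightarrow)$ the approaches diverge. The paper takes a small ball $h:\BB^m\to\Pos_\eta$ around $L_0^{-1}L_1$, builds for each $s\in\BB^m$ a curve whose logarithmic derivative is \emph{piecewise constant} (equal to $m\tau_j(s)\fl_{i_j}+\epsilon\fl$ on the $j$-th subinterval), and then uses a topological degree argument to locate some $s_\epsilon$ with endpoint $L_1$. You instead use smooth bump functions with disjoint supports, so that $\Psi(\epsilon,T)=\Gamma^{\epsilon,T}(1)$ depends smoothly on $(\epsilon,T)$ by standard ODE theory; since $T\mapsto\Psi(0,T)$ is precisely the reduced-word diffeomorphism onto $\Pos_\eta$, the implicit function theorem replaces the degree argument. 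Your route is arguably tidier and has the bonus of producing a \emph{smooth} convex curve directly, whereas the paper's piecewise-constant construction yields only an absolutely continuous one and relegates the smooth case to Remark~\ref{rem:totallypositive}. The trade-off is that the paper's degree argument needs only continuity of $h_\epsilon$ in $s$, while yours requires checking $C^1$ regularity of $\Psi$ across $\epsilon=0$; this is routine but is indeed, as you note, the point that deserves care.
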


% We shall later see that the reciprocal also holds.

\begin{proof}
We first prove that the existence of $\Gamma$ implies $L_0 \ll L_1$.
Given $\Gamma$ and $\bi_0, \bi_1 \in \nmaisum^{(k)}$ with $\bi_0 > \bi_1$,
Lemma \ref{lemma:explicitGamma} gives us a formula 
for $(L_0^{-1} L_1)_{\bi_0,\bi_1} > 0$:
$L_0^{-1} L_1$ is therefore totally positive.
%Notice that the curve $\Gamma$ is not required to be smooth.

Conversely,
let $\fl = \sum_i c_i \fl_i \in \lo_{n+1}^{1}$ for fixed positive $c_i$.
Consider a small closed ball of radius $r > 0$
centered at $L_0^{-1}L_1$ and contained in $\Pos_\eta$,
the image of a continuous map
$h: \BB^m \to \Pos_\eta \subset \Lo_{n+1}^{1}$
with $h(0) = L_0^{-1}L_1$ such that the topological degree of
$h|_{\Ss^{m-1}}$ around $L_0^{-1}L_1$ equals $+1$
(here $m = \dim(\Lo_{n+1}^{1})$).
Consider a fixed reduced word $\eta = a_{i_1} \cdots a_{i_m}$.
Define continuous functions $\tau_i: \BB^m \to (0,+\infty)$
such that $h(s) = \jacobi_{i_1}(\tau_1(s))\cdots \jacobi_{i_m}(\tau_m(s))$.
For $\epsilon \ge 0$, let
\[ \Lambda_\epsilon(s)(t) = m \tau_{j}(s) \fl_{i_j} + \epsilon \fl, \quad
t \in \left(\frac{j-1}{m},\frac{j}{m}\right). \]
Integrate to obtain maps 
\[ \Gamma_\epsilon(s): [0,1] \to \Lo_{n+1}^1, \quad
\Gamma_\epsilon(s)(0) = L_0, \quad
(\Gamma_\epsilon(s)(t))^{-1} (\Gamma_\epsilon(s))'(t) =
\Lambda_\epsilon(s)(t). \]
Notice that $\Gamma_\epsilon(s)$ is a convex curve if $\epsilon > 0$.
Define $h_\epsilon(s) = L_0^{-1} \Gamma_\epsilon(s)(1)$:
clearly $h_0 = h$, i.e., $\Gamma_0(s)(1) = L_0 h(s)$.
By continuity, there exists $\epsilon > 0$ such that for all $s \in \BB^m$
we have $|h_\epsilon(s) - h_0(s)| < r/2$.
The topological degree of $h_\epsilon|_{\Ss^{m-1}}$
around $L_0^{-1}L_1$ equals $+1$.
There exists therefore $s_\epsilon \in \BB^m$
with $h_\epsilon(s_\epsilon) = L_0^{-1}L_1$.
We have that 
$\Gamma = \Gamma_{\epsilon}(s_\epsilon): [0,1] \to \Lo_{n+1}^{1}$
is a convex curve with
$\Gamma(0) = L_0$, $\Gamma(1) = L_1$.
\end{proof}

\begin{rem}
\label{rem:totallypositive}
Minor modifications in the above argument yields a smooth 
convex curve $\Gamma:[0,1]\to\Lo^1_{n+1}$ with $\Gamma(0)=L_0$ 
and $\Gamma(1)=L_1$ if $L_0\ll L_1$. 
\end{rem}

We know by now that if $L_0 \in \Pos_\sigma$ for $\sigma \ne \eta$
and $\Gamma: [0,1] \to \Lo_{n+1}^1$ is a convex curve
with $\Gamma(0) = L_0$ then $\Gamma(t) \in \Pos_\eta$ for all $t > 0$.
The following lemma shows that,
at least from the point of view of certain entries,
the curve $\Gamma$ goes in with positive speed.

\begin{lemma}
\label{lemma:positivespeed}
Given $\sigma \in S_{n+1}$, $\sigma \ne \eta$,
there exist $k \in \nmaisum$ and indices
$\bi_0, \bi_1, \bi_2 \in \nmaisum^{(k)}$ and $j \in \nmesmo$
such that $\bi_0 \ge \bi_1 > \bi_2$,
$\bi_1 \overset{j}{\to} \bi_2$ and, 
for all convex curves $\Gamma: [0,1] \to \Lo_{n+1}^1$
with $\Gamma(0) \in \Pos_\sigma$ and $\Gamma'(0) \ne 0$ (and well defined),
if $g(t) = (\Lambda^k(\Gamma(t)))_{\bi_0,\bi_2}$ then
$g(0) = 0$ and $g'(0) > 0$.
\end{lemma}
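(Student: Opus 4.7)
The plan is to locate the required multi-indices by a minimality argument on the poset $\nmaisum^{(k)}$ and then differentiate the minor directly using that $\Lambda^k$ is a group homomorphism on invertible matrices.

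First, since $\sigma \ne \eta$ and the strata $\Pos_\sigma$, $\Pos_\eta$ are disjoint, there must exist at least one pair $(\bi_0', \bi_2')$ in some $\nmaisum^{(k)}$ with $\bi_0' > \bi_2'$ for which the minor $(\Lambda^k L)_{\bi_0', \bi_2'}$ vanishes identically on $\Pos_\sigma$; otherwise every such minor would be strictly positive on $\Pos_\sigma$, placing $\Pos_\sigma$ inside $\Pos_\eta$. Choose such a pair $(\bi_0, \bi_2)$ for which $l := \sum(\bi_0) - \sum(\bi_2) \ge 1$ is as small as possible, fix any chain $\bi_0 = \bj_0 \overset{j_1}{\to} \bj_1 \overset{j_2}{\to} \cdots \overset{j_l}{\to} \bj_l = \bi_2$, and set $\bi_1 := \bj_{l-1}$, $j := j_l$. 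Then $\bi_1 \overset{j}{\to} \bi_2$ and $\bi_0 \ge \bi_1$, and the minimality of $l$ guarantees $(\Lambda^k L)_{\bi_0, \bi_1} > 0$ for every $L \in \Pos_\sigma$; the boundary case $l = 1$ is also fine, since then $\bi_1 = \bi_0$ and the diagonal minor equals $1$.

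For the derivative computation I would write $\Gamma(0)^{-1} \Gamma'(0) = \sum_i \beta_i(0) \fl_i$ with each $\beta_i(0) > 0$ (automatic from the definition of a convex curve at a point of differentiability). Using that $\Lambda^k$ is a group homomorphism,
\[
\frac{d}{dt} \Lambda^k(\Gamma(t))\Big|_{t=0} = \Lambda^k(\Gamma(0)) \cdot \sum_{i \in \nmesmo} \beta_i(0)\, \Lambda^k(\fl_i);
\]
since the nonzero entries of $\Lambda^k(\fl_i)$ occupy exactly those positions $(\bi, \bi_2)$ with $\bi \overset{i}{\to} \bi_2$, reading off the $(\bi_0, \bi_2)$ entry gives
\[
g'(0) = \sum_{i \in \nmesmo} \beta_i(0) \sum_{\bi \overset{i}{\to} \bi_2} (\Lambda^k(\Gamma(0)))_{\bi_0, \bi}.
\]
Every summand is nonnegative because $\Gamma(0) \in \Pos_\sigma \subseteq \overline{\Pos_\eta}$, and the specific term $(i,\bi) = (j, \bi_1)$ contributes $\beta_j(0) (\Lambda^k(\Gamma(0)))_{\bi_0, \bi_1} > 0$, so $g'(0) > 0$. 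Meanwhile $g(0) = 0$ by the defining vanishing property of $(\bi_0, \bi_2)$.

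The main obstacle is ensuring that one single minimal pair $(\bi_0, \bi_2)$ simultaneously delivers a cover $\bi_1 \overset{j}{\to} \bi_2$ sitting below $\bi_0$ and a strictly positive minor $(\Lambda^k L_0)_{\bi_0, \bi_1}$; both come out of the minimal choice of $l$ together with the chain characterisation of $\Pos_\sigma$ minors from Lemma \ref{lemma:possigma}.
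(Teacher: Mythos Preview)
Your argument is correct and follows essentially the same strategy as the paper. The paper fixes $\bi_0$ and chooses $\bi_2 \le \bi_0$ \emph{maximal} with vanishing minor, whereas you minimise the length $l = \sum(\bi_0)-\sum(\bi_2)$ over all vanishing pairs; both extremality choices serve the same purpose, namely forcing the intermediate minor $(\Lambda^k L)_{\bi_0,\bi_1}$ to be strictly positive on $\Pos_\sigma$. For the derivative, the paper bounds $g(t)$ from below by the single product $(\Lambda^k L_0)_{\bi_0,\bi_1}(\Lambda^k(L_0^{-1}\Gamma(t)))_{\bi_1,\bi_2}$ via Lemma~\ref{lemma:explicitGamma} and then differentiates, while you differentiate the full matrix product at $t=0$; these are two presentations of the same computation, and in both cases positivity comes from the nonnegativity of all remaining terms (using $\Gamma(0)\in\overline{\Pos_\eta}$) together with the one strictly positive summand $\beta_j(0)(\Lambda^k\Gamma(0))_{\bi_0,\bi_1}$.
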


\begin{proof}
Consider $k$ and a pair of indices $\bi_0 \ge \bi_3$  
in $\nmaisum^{(k)}$
such that $(\Lambda^k(L))_{\bi_0,\bi_3} = 0$ for $L \in \Pos_\sigma$
(see Lemma \ref{lemma:possigma}).
Keep $k$ and $\bi_0$ fixed and search for $\bi_2 \le \bi_0$ maximal
such that $(\Lambda^k(L))_{\bi_0,\bi_2} = 0$ for $L \in \Pos_\sigma$.
Maximality implies that there exists $\bi_1$,
$\bi_0 \ge \bi_1 > \bi_2$
and an index $j$ such that $\bi_1 \overset{j}{\to} \bi_2$ and
$(\Lambda^k(L))_{\bi_0,\bi_1} > 0$ for $L \in \Pos_\sigma$.

Let $L_0=\Gamma(0)$, $c_0 = (\Lambda^k(L_0))_{\bi_0,\bi_1} > 0$.
Write $h_j(t) = (L_0^{-1} \Gamma(t))_{j+1,j}$
so that $h_j(0) = 0$ and $h'_j(0) = c_j > 0$ 
(see Equation \ref{equation:explicitGamma}).
Now, $g(0)=0$ and, for $t>0$, it follows from $L_0\in\Pos_\sigma$ 
and Lemma \ref{lemma:explicitGamma} that 
\[g(t) \ge
(\Lambda^k(L_0))_{\bi_0,\bi_1} (\Lambda^k(L_0^{-1} \Gamma(t)))_{\bi_1,\bi_2} =
c_0 h_j(t) = c_0 c_j (t+ o(t))\]
(in Landau's small-o notation)
so that $g'(0) \ge c_0c_j > 0$, as desired.
\end{proof}

\begin{rem}
\label{rem:explicitpositivespeed}
We now present an explicit construction. 
Given $\sigma\neq\eta$, take $k$ minimal such that 
$(n-k+2)^\sigma \neq k$. Set then $j=(n-k+2)^\sigma -1$. 
Equivalently, $k$ is minimal such that 
$\Lambda^k(L)_{\bi_0,\bi_3}=0$ for $L\in\Pos_\sigma$, 
$\bi_0=\{n-k+2,\ldots,n+1\}$ and $\bi_3=\{1,\ldots,k\}$.
If we follow the proof of Lemma \ref{lemma:positivespeed}, we have 
$\bi_1=\{1,\ldots,k-1,j+1\}$ and $\bi_2=\{1,\ldots,k-1,j\}$.
\end{rem}

For $\sigma = a_{i_1}\cdots a_{i_k} \in S_{n+1}$ a reduced word, 
and $t_1, \ldots , t_k \in \RR \smallsetminus \{0\}$, let
\begin{equation}
\label{equation:freesign}
L = \jacobi_{i_1}(t_1)\jacobi_{i_2}(t_2) \cdots \jacobi_{i_k}(t_k). 
\end{equation}
It is well known
\cite{Berenstein-Fomin-Zelevinsky,Shapiro-Shapiro-Vainshtein1}
that $L \in \bQ^{-1}[\Bru_\sigma]$.
% 20190403
Let 
\begin{align*}
\Neg_{\sigma} &= X \Pos_{\sigma} X 
= \{L\in\Lo^1_{n+1}\,\vert\, L^{-1}\in \Pos_{\sigma^{-1}}\} \\
&= \{ \jacobi_{i_1}(t_1)\jacobi_{i_2}(t_2) \cdots \jacobi_{i_k}(t_k)\,\vert\,
t_1, t_2, \ldots, t_k \in (-\infty,0) \} 
\end{align*}
where $X = \diag(1,-1,1,-1,\ldots)$ and
$\sigma = a_{i_1}a_{i_2}\cdots a_{i_k}$ is any reduced word
(therefore $k = \inv(\sigma)$).
Of course, each cell $\Neg_\sigma\subset \Lo^{1}_{n+1}$ 
is a contractible submanifold of dimension $\inv(\sigma)$, 
forming the stratification
\[ \overline{\Neg_{\eta}} = \bigsqcup_{\sigma \in S_{n+1}} \Neg_{\sigma}. \]
Notice that
$\overline{\Pos_\eta} \cap \overline{\Neg_\eta} = \{I\}$.

\goodbreak

\begin{lemma}
\label{lemma:transition}
Consider an interval $J\subseteq\RR$ and a convex curve
$\Gamma: J \to \Lo_{n+1}^{1}$.
\begin{enumerate}
\item{If $t_{-1} < t_0 < t_1$ and
$\Gamma(t_0) \in \Pos_{\sigma} \subset \partial\Pos_{\eta}$
for some $\sigma \ne \eta$ then
$\Gamma(t_1) \in \Pos_{\eta}$ and
$\Gamma(t_{-1}) \notin \overline{\Pos_{\eta}}$.}
\item{If $t_{-1} < t_0 < t_1$ and
$\Gamma(t_0) \in \Neg_{\sigma} \subset \partial\Neg_{\eta}$
for some $\sigma \ne \eta$ then
$\Gamma(t_{-1}) \in \Neg_{\eta}$ and
$\Gamma(t_1) \notin \overline{\Neg_{\eta}}$.}
\item{If $t_0 < t < t_1$ then 
$\Gamma(t) \in (\Gamma(t_0) \Pos_{\eta}) \cap (\Gamma(t_1) \Neg_{\eta})$.}
\end{enumerate}
\end{lemma}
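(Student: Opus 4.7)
The plan is to exploit Lemma \ref{lemma:totallypositive} (existence of a convex curve detects $\ll$) together with Lemma \ref{lemma:positivesemigroup} ($\Pos_{\sigma_0}\Pos_{\sigma_1}=\Pos_{\sigma_0\vee\sigma_1}$, in particular $\Pos_\rho\Pos_\eta=\Pos_\eta$ since $\eta$ is maximal). The restriction of a convex curve to a non\-degenerate subinterval is, after affine reparameterization, again a convex curve on $[0,1]$; this is the fact I will use repeatedly.

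For the first half of part (1) (the future): applying Lemma \ref{lemma:totallypositive} to $\Gamma|_{[t_0,t_1]}$ yields $\Gamma(t_0)^{-1}\Gamma(t_1)\in\Pos_\eta$. Writing $\Gamma(t_1)=\Gamma(t_0)\cdot\bigl(\Gamma(t_0)^{-1}\Gamma(t_1)\bigr)$ with $\Gamma(t_0)\in\Pos_\sigma$, Lemma \ref{lemma:positivesemigroup} delivers $\Gamma(t_1)\in\Pos_{\sigma\vee\eta}=\Pos_\eta$. For the second half (the past): assume, for contradiction, $\Gamma(t_{-1})\in\overline{\Pos_\eta}$, so that $\Gamma(t_{-1})\in\Pos_\rho$ for some $\rho\in S_{n+1}$. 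Applying the same argument to $\Gamma|_{[t_{-1},t_0]}$, one gets $\Gamma(t_0)\in\Pos_\rho\Pos_\eta=\Pos_\eta$, which contradicts $\Gamma(t_0)\in\Pos_\sigma$ with $\sigma\ne\eta$ by disjointness of the stratification $\overline{\Pos_\eta}=\bigsqcup_\sigma \Pos_\sigma$.

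Part (2) I will deduce from part (1) by the symmetry $L\mapsto XLX$, where $X=\diag(1,-1,1,-1,\ldots)$. Define $\hat\Gamma(s)=X\Gamma(-s)X$; since $X\fl_j X=-\fl_j$, a short computation gives
\[ \hat\Gamma(s)^{-1}\hat\Gamma'(s)=\sum_{j\in\nmesmo}\beta_j(-s)\,\fl_j, \]
so $\hat\Gamma$ is again convex. By the definition $\Neg_\sigma=X\Pos_\sigma X$, the conditions $\Gamma(t)\in\Neg_\sigma$ and $\hat\Gamma(-t)\in\Pos_\sigma$ coincide (and likewise for $\Neg_\eta$, $\overline{\Neg_\eta}$). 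Applying part (1) to $\hat\Gamma$ at the parameters $s_{-1}=-t_1<s_0=-t_0<s_1=-t_{-1}$ translates exactly into the statement of part (2).

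Part (3) is immediate: Lemma \ref{lemma:totallypositive} applied to $\Gamma|_{[t_0,t]}$ gives $\Gamma(t_0)^{-1}\Gamma(t)\in\Pos_\eta$, i.e.\ $\Gamma(t)\in\Gamma(t_0)\Pos_\eta$; applied to $\Gamma|_{[t,t_1]}$ it gives $\Gamma(t)^{-1}\Gamma(t_1)\in\Pos_\eta$, and inverting (using that $\eta^{-1}=\eta$ so that $\Pos_\eta^{-1}=\Neg_\eta$) yields $\Gamma(t_1)^{-1}\Gamma(t)\in\Neg_\eta$, i.e.\ $\Gamma(t)\in\Gamma(t_1)\Neg_\eta$. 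The only step that requires care is the bookkeeping in part (2) (signs, orientation, and which stratum one lands in after conjugation by $X$); the algebraic content is routine once the symmetry is set up.
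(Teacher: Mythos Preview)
Your proof is correct and follows essentially the same approach as the paper. The paper's argument for part~(1) is identical to yours (Lemma~\ref{lemma:totallypositive} followed by Lemma~\ref{lemma:positivesemigroup}, then the contradiction for the past); for parts~(2) and~(3) the paper simply writes ``analogous'' and ``follows from the previous ones'', and your explicit use of the conjugation $L\mapsto XLX$ with time reversal is exactly the right way to make ``analogous'' precise.
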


\begin{proof}
As in the first item, assume $\Gamma(t_0)\in\Pos_{\sigma}$, 
$\sigma\ne\eta$. From Lemma \ref{lemma:totallypositive}, 
$\Gamma(t_0)\ll \Gamma(t_1)$ and, by definition,  
$\Gamma(t_0)^{-1}\Gamma(t_1)\in\Pos_{\eta}$. 
By Lemma \ref{lemma:positivesemigroup}, 
$\Gamma(t_1)=\Gamma(t_0)\Gamma(t_0)^{-1}\Gamma(t_1)\in\Pos_{\eta}$, 
proving the first claim. 
Assume by contradiction that $\Gamma(t_{-1})\in\overline{\Pos_{\eta}}$: 
from the claim just proved, $\Gamma(t_0)\in\Pos_\eta$, 
a contradiction.
The second item is analogous.
%The first two items follow from
%Lemmas \ref{lemma:positivesemigroup} and \ref{lemma:totallypositive}.
The third item follows from the previous ones.
\end{proof}

\begin{lemma}
\label{lemma:posline}
Consider a reduced word $a_{i_1} \cdots a_{i_m} = \eta$;
consider 
\[ L = \jacobi_{i_1}(t_1) \cdots \jacobi_{i_m}(t_m) \in \Pos_\eta, \qquad
t_1, \ldots, t_m > 0. \]
Then $\jacobi_{i_1}(t) \ll L$ if and only if $t < t_1$ and
$\jacobi_{i_1}(t) \le L$ if and only if $t \le t_1$.
\end{lemma}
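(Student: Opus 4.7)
The plan is to treat the two implications separately. The easy direction, $t \le t_1 \Rightarrow \jacobi_{i_1}(t) \le L$ (with strict dominance if $t < t_1$), follows from the direct computation
\[
\jacobi_{i_1}(t)^{-1} L = \jacobi_{i_1}(t_1-t)\,\jacobi_{i_2}(t_2)\cdots\jacobi_{i_m}(t_m).
\]
For $t_1 - t > 0$, all parameters are positive and $(a_{i_1},\ldots,a_{i_m})$ is a reduced word for $\eta$, so the product lies in $\Pos_\eta$; for $t_1 - t = 0$ the initial factor disappears and the product lies in $\Pos_{\sigma'}\subset\overline{\Pos_\eta}$, where $\sigma' = a_{i_2}\cdots a_{i_m} \vartriangleleft \eta$. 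The case $t < 0$ follows from Lemma~\ref{lemma:positivesemigroup}: $\jacobi_{i_1}(-t) \in \Pos_{a_{i_1}}$ and $\Pos_{a_{i_1}}\cdot\Pos_\eta = \Pos_{a_{i_1}\vee\eta} = \Pos_\eta$.

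The hard direction, $t > t_1 \Rightarrow \jacobi_{i_1}(t)\not\le L$, is proved by exhibiting a $k$-minor of $\jacobi_{i_1}(-t)L$ that is strictly negative. Take $k = n+1-i_1$ and set
\[
\bi_0 = \{i_1+1,\ldots,n+1\},\quad \bi_1 = \{1,\ldots,n+1-i_1\},\quad \bi_0' = (\bi_0\setminus\{i_1+1\})\cup\{i_1\}.
\]
Since $i_1+1 \in \bi_0$ and $i_1 \notin \bi_0$, the matrix $\Lambda^k(\jacobi_{i_1}(s))$ has only two nonzero entries in row $\bi_0$: a $1$ at $(\bi_0,\bi_0)$ and an $s$ at $(\bi_0,\bi_0')$. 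Writing $L = \jacobi_{i_1}(t_1) L''$ with $L'' = \jacobi_{i_2}(t_2)\cdots\jacobi_{i_m}(t_m) \in \Pos_{\sigma'}$, a routine expansion gives
\[
(\Lambda^k(\jacobi_{i_1}(-t)L))_{\bi_0,\bi_1} = (\Lambda^k L'')_{\bi_0,\bi_1} + (t_1-t)\,(\Lambda^k L'')_{\bi_0',\bi_1}.
\]
It therefore suffices to prove (i) $(\Lambda^k L'')_{\bi_0,\bi_1} = 0$ and (ii) $(\Lambda^k L'')_{\bi_0',\bi_1} > 0$; granted these, for $t > t_1$ the displayed minor is strictly negative, so $\jacobi_{i_1}(-t)L \notin \overline{\Pos_\eta}$.

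By Lemma~\ref{lemma:possigma}, (i) and (ii) translate to Bruhat statements about the Grassmannian permutations $\tau$ and $\tau'$ that realize the transitions $\bi_0 \to \bi_1$ and $\bi_0' \to \bi_1$, respectively. Explicitly, $\tau(r) = r + (n+1-i_1)$ for $r \le i_1$ and $\tau(r) = r - i_1$ for $r > i_1$, giving $\mult_{i_1}(\tau) = i_1(n+1-i_1)$. On the other hand, $\sigma' = a_{i_1}\eta$ agrees with $\eta$ except at the swapped indices $i_1$ and $i_1+1$, yielding $\mult_{i_1}(\sigma') = i_1(n+1-i_1) - 1$. Hence $\mult(\tau) \not\le \mult(\sigma')$, so $\tau \not\le \sigma'$ in Bruhat order; this proves (i). For (ii), one checks that $\tau' = a_{i_1}\tau$ with $\inv(\tau') = \inv(\tau) - 1$, so $a_{i_1}$ is a common left descent of $\tau$ and $\eta$; the lifting property of Bruhat order (see \cite{Bjorner-Brenti, Humphreys}) then gives $\tau \le \eta \Rightarrow \tau' = a_{i_1}\tau \le a_{i_1}\eta = \sigma'$, which is (ii). The main obstacle is pinpointing the minor: one must choose $(\bi_0,\bi_1)$ whose transition is sensitive to precisely the single ``lost inversion'' $(i_1,i_1+1) \in \Inv(\eta)\setminus\Inv(\sigma')$, and the antidiagonal choice above accomplishes this.
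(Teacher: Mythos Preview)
Your proof is correct and takes a genuinely different route from the paper's. For the hard direction $t>t_1$, the paper argues by contradiction: it assumes $\jacobi_{i_1}(t_1-t)L_1 \in \Pos_\sigma$ for some $\sigma\in S_{n+1}$, splits into the two cases $a_{i_1}\sigma\lessgtr\sigma$, and in each case uses the reduced-word parameterization of $\Pos_\sigma$ to force $L_1$ into a cell other than $\Pos_{\sigma_1}$. Your argument instead exhibits an explicit witness---the southwest $(n+1-i_1)\times(n+1-i_1)$ minor of $\jacobi_{i_1}(-t)L$---and shows it is affine in $t$, vanishing at $t_1$ with strictly negative slope. This is more constructive (one sees exactly which total-positivity inequality fails first) but leans on more machinery: Lemma~\ref{lemma:possigma} together with the Bruhat lifting property from \cite{Bjorner-Brenti, Humphreys}, whereas the paper's argument stays entirely within the parameterization framework already set up in this section.

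One place where you are a bit terse: the sentence ``(i) and (ii) translate to Bruhat statements about the Grassmannian permutations $\tau$ and $\tau'$'' is doing real work. For (i), Lemma~\ref{lemma:possigma} gives $(\Lambda^k L'')_{\bi_0,\bi_1}=0$ only if \emph{no} $\sigma_1\le\sigma'$ has a reduced word that is a path $\bi_0\to\bi_1$; you then need that every such path is a reduced word for the \emph{same} permutation $\tau$. This is true---in any path the pattern $j,j\pm1,j$ is impossible (after $\overset{j}{\to}$ position $j$ is occupied and stays occupied until an $\overset{j-1}{\to}$), so any two paths differ only by commutation moves---but it merits a line. Alternatively, (i) follows without Lemma~\ref{lemma:possigma}: since $(i_1+1)^{\sigma'}=n+2-i_1>k$, row $i_1+1$ of the southwest $k\times k$ block of $P_{\sigma'}$ is zero, and this minor is invariant (up to a nonzero scalar) under the $\Up_{n+1}$-action, so it vanishes on all of $\Bru_{\sigma'}\supset\bQ[\Pos_{\sigma'}]$.
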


\begin{proof}
Let $\sigma_1 = a_{i_1}\eta = a_{i_2} \cdots a_{i_m} \vartriangleleft \eta$; let
\[ L_1 = \jacobi_{i_1}(-t_1) L =
\jacobi_{i_2}(t_2) \cdots \jacobi_{i_m}(t_m) \in \Pos_{\sigma_1} \subset
\overline{\Pos_\eta}. \]
By definition, $\jacobi_{i_1}(t) \ll L$ if and only if
$\jacobi_{i_1}(t_1-t) L_1 \in \Pos_{\eta}$:
this clearly holds for $t < t_1$.
For $t = t_1$, we have 
$\jacobi_{i_1}(t_1-t) L_1 = L_1 \in \Pos_{\sigma_1}$
and therefore $\jacobi_{i_1}(t) \le L$, $\jacobi_{i_1}(t) \not\ll L$.

Finally, assume by contradiction that for $t > t_1$ we have
$\jacobi_{i_1}(t_1-t) L_1 \in \Pos_{\sigma} \subset \overline\Pos_\eta$.
If $a_{i_1}\sigma < \sigma$ consider a reduced word
$\sigma = a_{i_1}a_{j_2}\cdots a_{j_k}$ and write
\[ \jacobi_{i_1}(t_1-t) L_1 =
\jacobi_{i_1}(\tau_1)\jacobi_{j_2}(\tau_2)\cdots\jacobi_{j_k}(\tau_k)
\]
so that
\[ L_1 =
\jacobi_{i_1}(t-t_1+\tau_1)\jacobi_{j_2}(\tau_2)\cdots\jacobi_{j_k}(\tau_k)
\in \Pos_{\sigma},
\]
which implies $\sigma = \sigma_1$, contradicting $a_{i_1}\sigma < \sigma$.
We thus have $a_{i_1}\sigma > \sigma$:
consider a reduced word $\sigma = a_{j_1}\cdots a_{j_k}$  and write
\[ \jacobi_{i_1}(t_1-t) L_1 =
\jacobi_{j_1}(\tau_1)\jacobi_{j_2}(\tau_2)\cdots\jacobi_{j_k}(\tau_k) \]
so that
\[ L_1 =
\jacobi_{i_1}(t-t_1)\jacobi_{j_1}(\tau_1)
\jacobi_{j_2}(\tau_2)\cdots\jacobi_{j_k}(\tau_k)
\in \Pos_{a_{i_1}\sigma},
\]
which implies $a_{i_1}\sigma = \sigma_1$,
contradicting $a_{i_1}\sigma > \sigma$.
\end{proof}

\section{Bruhat cells}
\label{sect:bruhatcell}

%For $\sigma \in S_{n+1}$, let $\Bru_{\sigma} \subset \SO_{n+1}$ be the unsigned Bruhat cell \[ \Bru_{\sigma} =  \{ Q \in \SO_{n+1} \;|\;\exists U_0, U_1 \in \Up_{n+1}, Q = U_0 P_{\sigma} U_1 \}; \] notice that this set is not connected. The set $\Bru_\sigma\subset\SO_{n+1}$ is the lift of the corresponding Schubert cell $\mathcal{C}_\sigma\subset\GL_{n+1}/\Up_{n+1}$ in the complete flag manifold under the inclusion map. These cells, particularly the intersection of translated Bruhat cells, have been extensively studied \cite{Chevalley, Demazure, Fomin-Zelevinsky1, Konstant, Rietsch, Shapiro-Shapiro-Vainshtein1, Shapiro-Shapiro-Vainshtein2}. As in \cite{Goulart, Saldanha3, Saldanha-Shapiro}, the \emph{signed Bruhat cell} $\Bru_{Q_0} \subset \SO_{n+1}$ for $Q_0 \in \B^+_{n+1}$ is \[ \Bru_{Q_0} = \{ Q \in \SO_{n+1} \;|\; \exists U_0, U_1 \in \Up_{n+1}^{+}, Q = U_0 Q_0 U_1 \} \] where $\Up^{+}$ is the group of upper triangular matrices with positive diagonal. The signed Bruhat cell $\Bru_{Q_{0}}$ is homeomorphic to the Schubert cell $\mathcal{C}_{\sigma_{Q_0}}$: the signed Bruhat cells are therefore contractible and disjoint. Each unsigned Bruhat cell is a disjoint union of $2^n$ signed Bruhat cells. The preimage of each cell by $\Pi: \Spin_{n+1} \to \SO_{n+1}$ is a disjoint union of two contractible components: we call these connected components the (lifted) Bruhat cells in $\Spin_{n+1}$: for $z \in \widetilde \B^+_{n+1}$, let $\Bru_z$ be the connected component of $\Pi^{-1}[\Bru_{\Pi(z)}]$ containing $z$.

In the introduction, we defined the Bruhat stratification of $\Spin_{n+1}$ as the lift of the classical Schubert stratification of the real complete flag variety $\Flag_{n+1}$. 
We now offer an alternative description based on the $UPU$ Bruhat decomposition of invertible matrices:
\[\forall M\in\GL_{n+1}\exists!\,\sigma\in S_{n+1}\exists\, U_0,U_1\in\Up_{n+1}\,\left(M=U_0P_\sigma U_1\right).\]
%This is essentially Gauss-Jordan elimination with pivoting in disguise, and is also available in the $LPU$, $UPL$ and $LPL$ forms. 
Notice that the permutation matrix is unique, while the triangular factors are not.
We thus have the partition  
\[\GL_{n+1}=\bigsqcup_{\sigma\in S_{n+1}}\Up_{n+1}P_\sigma\Up_{n+1}\] 
of the real general linear group into double cosets of $\Up_{n+1}$. 
%, called the Bruhat cells of $\GL_{n+1}$ (even if they are in general not connected). 
By absorbing signs from $U_0, U_1$ into $P_\sigma$, we may write %what we call 
the \emph{signed Bruhat decomposition}: % of $M\in\GL_{n+1}$:
\[\forall M\in\GL_{n+1}\exists!\,P\in\B_{n+1}\exists\, \widetilde U_0, \widetilde U_1\in\Up^+_{n+1}\,\left(M=\widetilde U_0P \widetilde U_1\right).\] 
Of course, we have $\sigma_P=\sigma$. 
For each $P\in\B_{n+1}$, the resulting double coset of $\Up^+_{n+1}$ is now a contractible subset of $\GL_{n+1}$, as is its intersection   
with the orthogonal group, which we call a \emph{signed Bruhat cell} 
\cite{Saldanha3, Saldanha-Shapiro}.
%Likewise, the \emph{unsigned Bruhat cells} are the sets 
%\[\Bru_\sigma=\bigsqcup_{\substack{P\in\B_{n+1} \\ \sigma_P=\sigma}}\Bru_P \subset \O_{n+1}, \quad \sigma\in S_{n+1}.\]
In fact, the signed Bruhat cell $\Bru_{P}$ is homeomorphic to the Schubert cell $\mathcal{C}_{\sigma_{P}}\subset\Flag_{n+1}$. 
We have the \emph{signed Bruhat stratification} of the group $\SO_{n+1}$: 
\[\SO_{n+1}=\bigsqcup_{P\in\B^+_{n+1}}\Bru_P,\quad
\Bru_P=\left(\Up^+_{n+1}P\Up^+_{n+1}\right)\cap\,\SO_{n+1}, 
\quad P\in\B^+_{n+1}.\]
The preimage of each cell under the covering map $\Pi: \Spin_{n+1} \to \SO_{n+1}$ is a disjoint union of two contractible components: we call each of these connected components a \emph{signed Bruhat cell} of $\Spin_{n+1}$: for $z \in \widetilde \B^+_{n+1}$, let $\Bru_z$ be the connected component of $\Pi^{-1}[\Bru_{\Pi(z)}]$ containing $z$.
The \emph{unsigned Bruhat cell} $\Bru_\sigma\subset\Spin_{n+1}$, indexed by the permutation $\sigma\in S_{n+1}$, is the disjoint 
union of the signed Bruhat cells $\Bru_z$, $z\in\widetilde\B^+_{n+1}$, such that $\sigma_z=\sigma$. 

Signed Bruhat cells in either $\SO_{n+1}$ or $\Spin_{n+1}$ can also be regarded as the orbits of a certain $\Up^+_{n+1}$-action \cite{Saldanha-Shapiro}. 
For all $U\in\Up^+_{n+1}$ and $Q\in\SO_{n+1}$, 
set $Q^U = \bQ(U^{-1}Q)$. 
This action preserves Bruhat cells and may be lifted to an action
on $\Spin_{n+1}$: we write $z^U = \bQ(U^{-1}z)$. 
Also, if $U \in \Up^+_{n+1}$ and
$\Gamma: [0,1] \to \Spin_{n+1}$ is a locally convex curve, 
then $\Gamma^U: [0,1] \to \Spin_{n+1}$,
$\Gamma^U(t) = \bQ(U^{-1}\Gamma(t))$, is also a locally convex curve.
Also, the nilpotent subgroup $\Up_{n+1}^{1}$ acts simply transitively
on each open Bruhat cell $\Bru_{q\acute\eta}$, $q\in\Quat_{n+1}$, 
and transitively on any Bruhat cell. 
In fact, given $z\in\widetilde\B^+_{n+1}$, the subgroup $\Up_{\sigma_z\eta}$ is the isotropy group of $z$ and the map $U\in\Up_{\sigma_z}\mapsto z^U\in\Bru_z$ is a diffeomorphism (the subgroups $\Up_\sigma\subseteq\Up^1_{n+1}$ were defined in Equation \ref{equation:Upsigma}, Section \ref{sect:symmetric}). 
This already shows that the signed Bruhat cell $\Bru_z$ is a contractible submanifold of dimension $\inv(\sigma_z)$. 

The map $z \mapsto z^U$ 
can be regarded as induced by a projective transformation
\begin{equation}
\label{equation:projtrans} \Ss^n \to \Ss^n, \qquad v \mapsto \frac{U^{-1}v}{|U^{-1}v|};
\end{equation}
we thus say that $\Up_{n+1}^{+}$ acts on $\Spin_{n+1}$
(or $\Bru_\sigma$ or $\Bru_{z_0}$) and on %spaces of 
locally convex curves by \emph{projective transformations}.

%\begin{rem}
%\label{rem:projtrans}
% Consider $z_0, z_1, \tilde z_0, \tilde z_1 \in \Spin_{n+1}$ such that $z_0^{-1}z_1, \tilde z_0^{-1}\tilde z_1 \in \Bru_{\acute\eta}$. Let $U \in \Up_{n+1}^{1}$ be the only such matrix for which $(z_0^{-1}z_1)^U = \tilde z_0^{-1}\tilde z_1$. We define a projective transformation, a homeomorphism from $\cL_n(z_0;z_1)$ to $\cL_n(\tilde z_0;\tilde z_1)$, taking $\Gamma \in \cL_n(z_0;z_1)$ to $\tilde z_0 (z_0^{-1}\Gamma)^U \in \cL_n(\tilde z_0;\tilde z_1)$. We are particularly interested in the restriction $\cL_{n,\conv}(z_0;z_1) \to \cL_{n,\conv}(\tilde z_0;\tilde z_1)$.
%\end{rem}

The following result is a simple corollary of these observations;
compare with Lemma \ref{lemma:totallypositive}.

\begin{lemma}
\label{lemma:convex1}
For any $z \in \Bru_{\acute\eta}$ there exists a locally convex curve
$\Gamma: [0,1] \to \Spin_{n+1}$,
$\Gamma(0) = 1$, $\Gamma(\frac12) = z$, $\Gamma(1) = \hat\eta$
and $\Gamma(t) \in \Bru_{\acute\eta}$ for all $t \in (0,1)$.

Moreover, if $h: K \to  \Bru_{\acute\eta}$ is a continuous function
then there exists a continuous function $H: K \times [0,1] \to \Spin_{n+1}$ 
such that for any $s \in K$ the locally convex curve
$\Gamma_s: [0,1] \to \Spin_{n+1}$, $\Gamma_s(t) = H(s,t)$,
satisfies
$\Gamma_s(0) = 1$, $\Gamma_s(\frac12) = h(s)$, $\Gamma_s(1) = \hat\eta$
and $\Gamma_s(t) \in \Bru_{\acute\eta}$ for all $t \in (0,1)$.
\end{lemma}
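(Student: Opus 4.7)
The strategy is to reduce to the model case $z=\acute\eta$, for which Example \ref{example:fh} already supplies an explicit curve, and then transport the solution by a projective transformation from $\Up^1_{n+1}$.

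First I would check that the standard curve $\Gamma_0(t)=\exp(\pi t\fh)$ satisfies the four required conditions when $z=\acute\eta$. The endpoint identities $\Gamma_0(0)=1$, $\Gamma_0(1/2)=\acute\eta$, $\Gamma_0(1)=\hat\eta$ are recorded in Example \ref{example:fh}, and $\fh=\sum_j\sqrt{j(n+1-j)}\,\fa_j$ is a positive combination of the $\fa_j$, so $\Gamma_0$ is locally convex. The delicate point is that $\Gamma_0(t)\in\Bru_{\acute\eta}$ for every $t\in(0,1)$. On $(0,1/2)$ the curve lives in $\cU_1$ with $\bL(\Gamma_0(t))=\exp(\tan(\pi t)\fh_L)\in\Pos_\eta$, so $\Gamma_0(t)\in\bQ[\Pos_\eta]$; this connected set lies in the unsigned cell $\Bru_\eta$ and contains points converging to $\acute\eta$ as $\tan(\pi t)\to+\infty$, hence lies in the single signed component $\Bru_{\acute\eta}$. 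On $(1/2,1)$ I would repeat the argument in the chart at $\hat\eta$: $\hat\eta^{-1}\Gamma_0(t)=\exp(\pi(t-1)\fh)\in\cU_1$ has $\bL$-image in $\Neg_\eta$ and $\bQ$-image accumulating at $\grave\eta$, so $\hat\eta^{-1}\Gamma_0(t)\in\Bru_{\grave\eta}$. Since $\Quat_{n+1}$ permutes the components of $\Bru_\eta$ freely and $\hat\eta\grave\eta=\acute\eta$ (use $\hat\eta=\acute\eta\,\grave\eta^{-1}$ from Equation \ref{equation:acutegrave}), left-multiplication by $\hat\eta$ yields $\Gamma_0(t)\in\Bru_{\acute\eta}$.

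For arbitrary $z\in\Bru_{\acute\eta}$, the simply transitive action of $\Up^1_{n+1}$ on the open cell supplies a unique $U\in\Up^1_{n+1}$ with $\acute\eta^U=z$; set $\Gamma(t)=\Gamma_0(t)^U=\bQ(U^{-1}\Gamma_0(t))$. The paper records that projective transformations carry locally convex curves to locally convex curves and preserve the Bruhat stratification, so $\Gamma$ is locally convex and $\Gamma(t)\in\Bru_{\acute\eta}$ on $(0,1)$. Because $\Bru_1=\{1\}$ and $\Bru_{\hat\eta}=\{\hat\eta\}$ are zero-dimensional (their indexing permutation is $e$, of invariant zero), the $\Up^+_{n+1}$-action is forced to fix them pointwise, which yields $\Gamma(0)=1^U=1$ and $\Gamma(1)=\hat\eta^U=\hat\eta$, while $\Gamma(1/2)=\acute\eta^U=z$ holds by construction. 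The parametric statement then follows by composing $h$ with the inverse of the orbit diffeomorphism $U\mapsto\acute\eta^U$ to obtain a continuous $U:K\to\Up^1_{n+1}$ with $\acute\eta^{U(s)}=h(s)$, and setting $H(s,t)=\Gamma_0(t)^{U(s)}$; joint continuity is inherited from joint continuity of the action $(U,z)\mapsto z^U$.

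The main obstacle is the verification in Step one that $\Gamma_0$ never leaves the signed component $\Bru_{\acute\eta}$ as it crosses the boundary of the triangular chart $\cU_1$ at $t=1/2$: this is what forces the switch to the chart at $\hat\eta$ and the bookkeeping identification $\hat\eta\cdot\Bru_{\grave\eta}=\Bru_{\acute\eta}$. Once the standard curve is pinned down, the reduction by projective transformation is essentially formal.
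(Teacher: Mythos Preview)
Your proposal is correct and follows essentially the same approach as the paper: use the explicit curve $\Gamma_0(t)=\exp(\pi t\fh)$ from Example~\ref{example:fh}, verify it stays in $\Bru_{\acute\eta}$ on $(0,1)$, and then transport by the unique $U\in\Up^1_{n+1}$ with $\acute\eta^U=z$ (resp.\ $=h(s)$) via the projective action. The only tactical difference is in the verification step: the paper writes down a single explicit decomposition $\Gamma_0(t)=U_1(t)\,\acute\eta\,U_2(t)$ with $U_1(t)\in\Up^1_{n+1}$, $U_2(t)\in\Up^+_{n+1}$ valid for all $t\in(0,1)$ (obtained by applying Equation~\eqref{equation:fhfhL} with $\theta=\pi(t-\tfrac12)$), whereas you split $(0,1)$ at $t=\tfrac12$ and argue separately in the charts $\cU_1$ and $\hat\eta\,\cU_1$ via $\Pos_\eta$ and $\Neg_\eta$; both routes land in the same place.
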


\begin{proof}
As in Example \ref{example:fh}, take
\[ \fh = \sum_{j\in\nmesmo} \sqrt{j(n+1-j)}\; \fa_j, \qquad
\Gamma_0(t) = \exp(\pi t \fh). \]
Recall that
$\Gamma_0(0) = 1$, $\Gamma_0(\frac12) = \acute\eta$, $\Gamma_0(1) = \hat\eta$.
Equation \ref{equation:fhfhL} implies that, 
for $t\in (0,1)$, $\Gamma_0(t)=\exp\left(\pi\left(t-\frac12\right)\fh\right)=U_{1}(t)\acute{\eta}U_{2}(t)\in\Bru_{\acute{\eta}}$, 
where 
\begin{align*}
U_{1}(t) & =\acute{\eta}\exp\left(-\cot(\pi t)\fh_{L}\right)\acute{\eta}^{-1}\in\Up^{1}_{n+1}, \\
U_{2}(t) & =\exp(-\log(\sin(\pi t))[\fh_L,\fh_L^{\transpose}])
\exp(\cot(\pi t)\fh_L^{\transpose})\in\Up^{+}_{n+1}.
\end{align*}
%A simple computation shows that
%$\Gamma_0(t) \in \Bru_{\acute\eta}$ for all $t \in (0,1)$.
Define $h_U: K \to  \Up_{n+1}^1$ by $\acute\eta^{h_U(s)} = h(s)$;
define $H(s,t) = (\Gamma_0(t))^{h_U(s)}$ and $\Gamma_s = \Gamma_0^{h_U(s)}$.
\end{proof}

If $q \in \Quat_{n+1}$ then $\Bru_q = \{q\}$.
If $z = q \acute\eta \in \widetilde \B_{n+1}^{+}$, $q \in \Quat_{n+1}$,
then $\Bru_z = \cU_z$, the domain of a triangular system of coordinates 
centered in $z$ (see Section \ref{sect:triangle}). 
If $z = q (\acute a_i)^{\pm 1}$,
$q \in \Quat_{n+1}$,
then $\Bru_z = \{q \alpha_i(\pm\theta)\,\vert\, \theta \in (0,\pi)\}$
where $\alpha_i(\theta) = \exp(\theta \fa_i)$
%and $\fa_i \in \so_{n+1}$ is given by Equation \ref{equation:fa}
(recall that $\alpha_i(\pm\frac{\pi}{2}) = (\acute a_i)^{\pm1}$).
%We generalize this observation below.
Theorem \ref{theo:Bruhat} and 
Corollaries \ref{coro:Bruhat1} and \ref{coro:Bruhat2} generalize 
these observations. 
The diffeomorphism defined by Equation \ref{equation:Possigma} 
is a triangular counterpart to the one in Theorem \ref{theo:Bruhat}. 
A crucial difference between the present case and the triangular case
is that $(0,+\infty)$ and $\Pos_\eta$ are semigroups
(i.e., closed under sums and products, respectively)
but $(0,\pi)$ and $\Bru_{\acute\eta}$ are not.
  
Before presenting a proof of Theorem \ref{theo:Bruhat}, 
we give some applications. 
Notice that Corollaries \ref{coro:Bruhat1} and \ref{coro:Bruhat2} 
follow easily from Theorem \ref{theo:Bruhat}.

%The reader will notice the similarities between these results and the discussion concerning total positivity in the triangular group in Section \ref{sect:totallypositive}.

%\begin{lemma}
%\label{lemma:bruhatstep}
%Let $q \in \Quat_{n+1}$, $\sigma_0, \sigma_1 \in S_{n+1}$ with $\sigma_1 \vartriangleleft \sigma_0 = \sigma_1 a_i$, $z_0 = q \acute\sigma_0$, $z_1 = q \acute\sigma_1$. Then the map \[ \Phi: \Bru_{z_1} \times (0,\pi) \to \Bru_{z_0}, \qquad \Phi(z,\theta) = z\alpha_i(\theta) \] is a diffeomorphism. Similarly, if $\sigma_1 \vartriangleleft \sigma_0 = a_i  \sigma_1$ then the map \[ \Phi: (0,\pi) \times \Bru_{z_1} \to \Bru_{z_0}, \qquad \Phi(\theta,z) = \alpha_i(\theta)z \] is a diffeomorphism.
%\end{lemma}

%\begin{coro}
%\label{coro:bruhatstep}
%Let $\sigma = a_{i_1}\cdots a_{i_k}$ be a reduced word (so that $k = \inv(\sigma)$). Let $q \in \Quat_{n+1}$.%, $z_0 = q \acute\sigma$. Then the map \[ \Psi_{(q;i_1,\ldots,i_k)}:(0,\pi)^k \to \Bru_{q\acute\sigma}, \qquad (\theta_1, \ldots, \theta_k) \mapsto q \alpha_{i_1}(\theta_1) \cdots \alpha_{i_k}(\theta_k) \] is a diffeomorphism.
%\end{coro}

%\begin{proof}
%The proof is by induction on $k$. The cases $k \le 1$ are easy and have been discussed above. The induction step is provided by Lemma \ref{lemma:bruhatstep}.
%\end{proof}

\begin{coro}
\label{coro:bruhatproduct}
Consider $\sigma_0, \sigma_1 \in S_{n+1}$, $\sigma = \sigma_0\sigma_1$.
If $\inv(\sigma) = \inv(\sigma_0)+\inv(\sigma_1)$ then
$\Bru_{\acute\sigma_0}\Bru_{\acute\sigma_1} = \Bru_{\acute\sigma}$;
moreover, the map
\[ 
\Bru_{\acute\sigma_0} \times \Bru_{\acute\sigma_1} \to \Bru_{\acute\sigma},
\qquad (z_0,z_1) \mapsto z_0z_1 \]
is a diffeomorphism.
\end{coro}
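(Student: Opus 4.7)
The plan is to reduce everything to the parameterizations provided by Corollary \ref{coro:Bruhat2}.

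First, I would choose a reduced word $\sigma_0 = a_{i_1}\cdots a_{i_{k_0}}$ with $k_0 = \inv(\sigma_0)$ and a reduced word $\sigma_1 = a_{j_1}\cdots a_{j_{k_1}}$ with $k_1 = \inv(\sigma_1)$. The hypothesis $\inv(\sigma) = k_0 + k_1$ guarantees that the concatenation
\[ \sigma = a_{i_1}\cdots a_{i_{k_0}} a_{j_1}\cdots a_{j_{k_1}} \]
is itself a reduced word. Consequently $\acute\sigma = \acute\sigma_0 \acute\sigma_1$ by the definition of $\longacute$ in Equation \ref{equation:acutegrave}.

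Next, I would apply Corollary \ref{coro:Bruhat2} (with $q = 1$ and all signs $\varepsilon_\ell = +1$) to obtain three diffeomorphisms
\begin{align*}
\Psi_0 &: (0,\pi)^{k_0} \to \Bru_{\acute\sigma_0}, \quad
\Psi_0(\theta) = \alpha_{i_1}(\theta_1)\cdots \alpha_{i_{k_0}}(\theta_{k_0}), \\
\Psi_1 &: (0,\pi)^{k_1} \to \Bru_{\acute\sigma_1}, \quad
\Psi_1(\phi) = \alpha_{j_1}(\phi_1)\cdots \alpha_{j_{k_1}}(\phi_{k_1}), \\
\Psi &: (0,\pi)^{k_0+k_1} \to \Bru_{\acute\sigma}, \quad
\Psi(\theta,\phi) = \alpha_{i_1}(\theta_1)\cdots \alpha_{j_{k_1}}(\phi_{k_1}).
\end{align*}
Directly from the formulas, $\Psi(\theta,\phi) = \Psi_0(\theta)\,\Psi_1(\phi)$.

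To conclude, I would observe that the map $(z_0,z_1) \mapsto z_0 z_1$ factors as the composition
\[ \Bru_{\acute\sigma_0} \times \Bru_{\acute\sigma_1}
\xrightarrow{\Psi_0^{-1} \times \Psi_1^{-1}} (0,\pi)^{k_0}\times (0,\pi)^{k_1}
\xrightarrow{\Psi} \Bru_{\acute\sigma}, \]
each factor being a diffeomorphism; in particular, the image of the multiplication map is exactly $\Bru_{\acute\sigma}$, proving $\Bru_{\acute\sigma_0}\Bru_{\acute\sigma_1} = \Bru_{\acute\sigma}$, and the multiplication map is a diffeomorphism. The only subtle point—and the reason one needs the length-additivity hypothesis—is the verification that the concatenation of the two reduced words is reduced, so that Corollary \ref{coro:Bruhat2} genuinely applies to produce the diffeomorphism $\Psi$; without this, the product formula might land in a smaller (lower-dimensional) cell, and the map would fail to be onto $\Bru_{\acute\sigma}$.
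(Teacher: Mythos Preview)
Your proposal is correct and follows exactly the approach indicated in the paper, which simply states that the result follows directly from Corollary~\ref{coro:Bruhat2}; you have just written out the details of that deduction.
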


\begin{proof}
This follows directly from Corollary \ref{coro:Bruhat2}.
\end{proof}

% Define $\inv: \Spin_{n+1} \to \NN$ by $\inv(z) = \inv(\sigma)$
% if $z \in \Bru_{\sigma}$.

% \begin{coro}
% \label{coro:invineq}
% For $z \in \Spin_{n+1}$, $i \in \nmesmo$ and $\theta \in \RR$,
% we have $\inv(z \alpha_i(\theta)) \le \inv(z) + 1$.
% More generally, if $z_0, z_1 \in \Spin_{n+1}$ then
% $\inv(z_0z_1) \le \inv(z_0) \inv(z_1)$.
% \end{coro}

% \begin{proof}
% The first claim follows from Lemma \ref{lemma:bruhatstep}.
% The second one follows from the first (and induction).
% \end{proof}

\begin{lemma}
\label{lemma:posbruhat}
Consider $\sigma \in S_{n+1}$.
Then $\bQ[\Pos_{\sigma}] \subset \Bru_{\acute\sigma}$.
Furthermore, if $\sigma \ne e$ then
$\acute\sigma$ does not belong to $\bQ[\Pos_{\sigma}]$.
% but belongs to its closure.
Similarly, %let $z_{-1} = \grave\sigma$:
% (\longacute(\sigma^{-1}))^{-1}$:
$\bQ[\Neg_{\sigma}] \subset \Bru_{\grave{\sigma}}$;
if $\sigma \ne e$ then $\grave{\sigma}$ does not belong to $\bQ[\Neg_{\sigma}]$.
% but belongs to its closure.
\end{lemma}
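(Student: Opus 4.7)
The plan is to establish $\bQ[\Pos_\sigma]\subseteq\Bru_{\acute\sigma}$ by induction on $\inv(\sigma)$, exploiting the factorization of Equation~\ref{equation:Possigma}: if $\sigma=\sigma_1 a_j$ with $\sigma_1\vartriangleleft\sigma$, then every $L\in\Pos_\sigma$ can be written as $L=L_1\jacobi_j(t)$ with $L_1\in\Pos_{\sigma_1}$ and $t>0$. The base case $\sigma=e$ is immediate, since $\Pos_e=\{I\}$ and $\bQ(I)=1=\acute e$. For the inductive step I will track the convex curve $s\mapsto L_1\jacobi_j(s)$, $s\in[0,t]$, under $\bQ$; by Lemma~\ref{lemma:al} its image is a smooth reparametrization of the integral curve of $X_{\fa_j}$ through $\bQ(L_1)$, so
\[\bQ(L_1\jacobi_j(s))=\bQ(L_1)\,\alpha_j(\theta(s))\]
for some strictly increasing $\theta\colon[0,t]\to[0,+\infty)$ with $\theta(0)=0$. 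By the inductive hypothesis $\bQ(L_1)\in\Bru_{\acute\sigma_1}$, and Theorem~\ref{theo:Bruhat} (with $q=1$, $\varepsilon=+1$) places $\bQ(L_1)\alpha_j(\theta)$ in $\Bru_{\acute\sigma_1\acute a_j}=\Bru_{\acute\sigma}$ precisely for $\theta\in(0,\pi)$.

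The main obstacle is ruling out $\theta(s)=\pi$ for some $s\in(0,t]$. I plan to argue by contradiction: if $s_0$ were the smallest such value, then $\bQ(L_1\jacobi_j(s_0))=\bQ(L_1)\hat a_j$ would lie in $\Bru_{\sigma_1}$, since $\hat a_j\in\Quat_{n+1}$ and $\sigma_{\bQ(L_1)}=\sigma_1$. But the already recorded inclusion $\Pos_\sigma\subseteq\bQ^{-1}[\Bru_\sigma]$ (see the discussion around Equation~\ref{equation:freesign}) forces $\bQ(L_1\jacobi_j(s_0))\in\Bru_\sigma$, contradicting the disjointness of the distinct unsigned Bruhat cells indexed by $\sigma\ne\sigma_1$. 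Hence $\theta(t)\in(0,\pi)$ and $\bQ(L)\in\Bru_{\acute\sigma}$, closing the induction.

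For the non-membership claim, I will prove the stronger statement that $\acute\sigma\notin\cU_1$ whenever $\sigma\ne e$. By Lemma~\ref{lemma:pihat}, $\Pi(\acute\sigma)$ has nonzero entries only at positions $(i,i^\sigma)$, so its $k\times k$ northwest minor vanishes whenever $\{1,\dots,k\}^\sigma\ne\{1,\dots,k\}$. Choosing $k$ minimal with $k^\sigma\ne k$ furnishes such an index, whence $\Pi(\acute\sigma)\notin\cU_I$ and therefore $\acute\sigma\notin\Pi^{-1}[\cU_I]\supseteq\cU_1=\bQ[\Lo^1_{n+1}]\supseteq\bQ[\Pos_\sigma]$. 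The analogous statements for $\Neg_\sigma$ and $\grave\sigma$ will follow by the same arguments with all parameters $t$ taken negative (so that $\theta$ decreases from $0$ into $(-\pi,0)$ and the endpoint lands in $\Bru_{\grave\sigma_1\grave a_j}=\Bru_{\grave\sigma}$) and by noting that $\Pi(\grave\sigma)$ has the same zero pattern as $\Pi(\acute\sigma)$.
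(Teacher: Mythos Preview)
Your proof is correct and follows essentially the same inductive strategy as the paper: factor $L\in\Pos_\sigma$ via Equation~\ref{equation:Possigma}, apply Lemma~\ref{lemma:al} to obtain $\bQ(L_1\jacobi_j(s))=\bQ(L_1)\alpha_j(\theta(s))$, and invoke Theorem~\ref{theo:Bruhat}. The one minor difference is in how you exclude $\theta(s)=\pi$: the paper argues more directly that $z_{k-1}\hat a_j\in\cU_{\hat a_j}$ cannot lie in $\cU_1=\bQ[\Lo^1_{n+1}]$, whereas you appeal to the cited fact $\Pos_\sigma\subseteq\bQ^{-1}[\Bru_\sigma]$ and the disjointness of $\Bru_\sigma$ and $\Bru_{\sigma_1}$; both are valid, and your justification of $\acute\sigma\notin\cU_1$ via Lemma~\ref{lemma:pihat} is a welcome elaboration of what the paper leaves as ``clearly''.
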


\begin{proof}
% We prove by induction on $\inv(\sigma)$ that $\acute\sigma$
% is in the closure of $\bQ[\Pos_{\sigma}]$.
The case $\sigma = e$ is trivial;
for $\sigma = a_j$ we have
$\Pos_{\sigma} = \{\lambda_j(t)\,\vert\, t > 0\}$
and $\bQ(\lambda_j(t)) = \alpha_j(\arctan(t))$
(where $\alpha_j(\theta) = \exp(\theta \fa_j)$ and
$\lambda_j(t) = \exp(t \fl_j)$). We thus have
\[ \lim_{t \to +\infty} \bQ(\lambda_j(t)) =
\alpha_j\left(\frac{\pi}{2}\right) = \acute a_j, \]
as desired.

We proceed to the induction step.
%Assume that $\sigma_1 \vartriangleleft \sigma_0 = a_i \sigma_1$
%and that $\bQ[\Pos_{\sigma_1}] \subseteq \Bru_{\acute\sigma_1}$.
%Consider $L_0 \in \Pos_{\sigma_0}$:
%we write $L_0 = \jacobi_i(t) L_1$, $L_1 \in \Pos_{\sigma_1}$
%and $t \in (0,+\infty)$
Assume $\sigma_k = a_{i_1} \cdots a_{i_k}$ (a reduced word)
and $\sigma_{k-1} = a_{i_1} \cdots a_{i_{k-1}} \vartriangleleft
\sigma_k = \sigma_{k-1} a_{i_k}$.
Consider $L_k \in \Pos_{\sigma_k}$; 
write $L_k = L_{k-1} \jacobi_{i_k}(t_k)$,
$t_k \in (0,+\infty)$, $L_{k-1} \in \Pos_{\sigma_{k-1}}$.
By induction, we have $\bQ(L_{k-1}) = z_{k-1} \in \Bru_{\acute\sigma_{k-1}}$.
Consider the curves $\Gamma_L: [0,t_k] \to \Lo_{n+1}^{1}$ and
$\Gamma: [0,t_k] \to \Spin_{n+1}$ defined by
$\Gamma_L(t) = L_{k-1} \jacobi_{i_k}(t)$ and $\Gamma = \bQ \circ \Gamma_L$.
In particular, $\Gamma(0) = z_{k-1}$.
The curve $\Gamma_L$ is tangent to the vector field $X_{\fl_{i_k}}$
and therefore, from Lemma \ref{lemma:al},
the curve $\Gamma$ is tangent to the vector field $X_{\fa_{i_k}}$.
We thus have $\Gamma(t) = z_{k-1} \alpha_{i_k}(\theta(t))$
for some smooth increasing function
$\theta: [0,+\infty) \to [0,+\infty)$.
But $z_{k-1} \in \cU_1$ implies
$z_{k-1} \alpha_{i_k}(\pi) = z_{k-1} \hat a_i \in \cU_{\hat{a}_i}$ 
and therefore $z_{k-1} \alpha_{i_k}(\pi)\notin\cU_1$. 
Thus, we have $\theta: [0,+\infty) \to [0,\pi)$.
From Theorem \ref{theo:Bruhat},
$z_k = \bQ(L_k) \in \Bru_{\acute\sigma_k}$, as desired.

Clearly, for $\sigma \ne e$ we have $\acute\sigma \notin \cU_1$,
implying $\acute\sigma \notin \bQ[\Pos_{\sigma}]$.
The claims concerning $\Neg_{\sigma}$
follow from the claims for $\Pos_{\sigma}$
either by taking inverses or by similar arguments.
\end{proof}

\begin{coro}
\label{coro:zkLk}
Consider
$\sigma_{k-1} \vartriangleleft \sigma_k = \sigma_{k-1} a_{i_k} \in S_{n+1}$.
Consider $z_{k-1} \in \Bru_{\acute\sigma_{k-1}}$
and $z_k \in \Bru_{\acute\sigma_k}$,
$z_k = z_{k-1} \alpha_{i_k}(\theta_k)$, $\theta_k \in (0,\pi)$.
If $z_k \in \bQ[\Pos_{\sigma_k}]$ then $z_{k-1} \in \bQ[\Pos_{\sigma_{k-1}}]$
and $z_{k-1} \alpha_{i_k}(\theta) \in \bQ[\Pos_{\sigma_k}]$
for all $\theta \in (0,\theta_k]$.
\end{coro}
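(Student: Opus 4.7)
The plan is to lift the statement into triangular coordinates, apply the semigroup parameterization of $\Pos_{\sigma_k}$, and then use uniqueness from Theorem \ref{theo:Bruhat} to match the two factorizations.

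First I would write $z_k = \bQ(L_k)$ with $L_k \in \Pos_{\sigma_k}$. Pick any reduced word for $\sigma_{k-1}$ and extend it by $a_{i_k}$ to get a reduced word for $\sigma_k$; the diffeomorphism of Equation \ref{equation:Possigma} then factors $L_k = L_{k-1}\,\jacobi_{i_k}(t_k)$ uniquely, with $L_{k-1}\in\Pos_{\sigma_{k-1}}$ and $t_k\in(0,+\infty)$. Set $\tilde z_{k-1}=\bQ(L_{k-1})$ and consider, as in the proof of Lemma \ref{lemma:posbruhat}, the curves $\Gamma_L(t)=L_{k-1}\,\jacobi_{i_k}(t)$ and $\Gamma=\bQ\circ\Gamma_L$ on $[0,t_k]$. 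By Lemma \ref{lemma:al}, $\Gamma$ is tangent to the left-invariant field $X_{\fa_{i_k}}$ up to a positive reparameterization, so there exists a smooth strictly increasing function $\theta:[0,t_k]\to[0,\pi)$ with $\theta(0)=0$ and $\Gamma(t)=\tilde z_{k-1}\,\alpha_{i_k}(\theta(t))$; by Lemma \ref{lemma:posbruhat} the bound $\theta(t_k)<\pi$ is automatic, and $\tilde z_{k-1}\in\Bru_{\acute\sigma_{k-1}}$.

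Next I would identify $\tilde z_{k-1}$ with $z_{k-1}$ using uniqueness. By hypothesis $z_k=z_{k-1}\alpha_{i_k}(\theta_k)$ with $z_{k-1}\in\Bru_{\acute\sigma_{k-1}}$ and $\theta_k\in(0,\pi)$; on the other hand the construction gives $z_k=\bQ(L_k)=\tilde z_{k-1}\alpha_{i_k}(\theta(t_k))$ with $\tilde z_{k-1}\in\Bru_{\acute\sigma_{k-1}}$ and $\theta(t_k)\in(0,\pi)$. Theorem \ref{theo:Bruhat} (applied with $\varepsilon=+1$ and $q=1$) asserts that the map $(z,\theta)\mapsto z\,\alpha_{i_k}(\theta)$ is a diffeomorphism from $\Bru_{\acute\sigma_{k-1}}\times(0,\pi)$ onto $\Bru_{\acute\sigma_k}$, so the two factorizations must coincide: $z_{k-1}=\tilde z_{k-1}=\bQ(L_{k-1})\in\bQ[\Pos_{\sigma_{k-1}}]$ and $\theta_k=\theta(t_k)$.

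Finally, for $\theta\in(0,\theta_k]$, since $\theta:[0,t_k]\to[0,\theta_k]$ is a smooth increasing bijection, there is $t\in(0,t_k]$ with $\theta(t)=\theta$; then
\[
z_{k-1}\,\alpha_{i_k}(\theta)\;=\;\bQ(L_{k-1})\,\alpha_{i_k}(\theta(t))\;=\;\Gamma(t)\;=\;\bQ\bigl(L_{k-1}\,\jacobi_{i_k}(t)\bigr),
\]
and $L_{k-1}\,\jacobi_{i_k}(t)\in\Pos_{\sigma_k}$ by Equation \ref{equation:Possigma}, which gives the second conclusion.

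The only subtle point is the last identification step: one must know that the factorization supplied by Theorem \ref{theo:Bruhat} is genuinely unique, so that the triangular-coordinate factorization $L_k=L_{k-1}\jacobi_{i_k}(t_k)$ and the Bruhat factorization $z_k=z_{k-1}\alpha_{i_k}(\theta_k)$ are forced to correspond under $\bQ$; everything else is a direct consequence of Lemmas \ref{lemma:al} and \ref{lemma:posbruhat} and the semigroup parameterization of the positive cells.
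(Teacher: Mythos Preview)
Your proof is correct and follows essentially the same approach as the paper's own proof: lift $z_k$ to $L_k\in\Pos_{\sigma_k}$, peel off the last factor $\jacobi_{i_k}(t_k)$ via the parameterization in Equation~\ref{equation:Possigma}, push back to $\Spin_{n+1}$ via $\bQ$ and Lemma~\ref{lemma:al}, and then invoke the injectivity of $\Phi$ in Theorem~\ref{theo:Bruhat} to force $\tilde z_{k-1}=z_{k-1}$. Your write-up is in fact more explicit than the paper's, spelling out the role of Lemma~\ref{lemma:posbruhat} and the surjectivity of $\theta$ onto $[0,\theta_k]$.
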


\begin{proof}
Let $\sigma_{k-1} = a_{i_1}\cdots a_{i_{k-1}}$ be a reduced word.  Let 
\[ L_k = \bL(z_k) = \lambda_{i_1}(t_1)\cdots
\lambda_{i_{k-1}}(t_{k-1}) \lambda_{i_k}(t_k). \]
Define $\tilde L_{k-1} = \lambda_{i_1}(t_1)\cdots
\lambda_{i_{k-1}}(t_{k-1})$ and $\tilde z_{k-1} = \bQ(\tilde L_{k-1})$.
The curve $\Gamma_L: [0,t_k] \to \Lo_{n+1}^1$,
$\Gamma_L(t) = \tilde L_{k-1} \lambda_{i_k}(t)$
is taken to $\Gamma = \bQ \circ \Gamma_L$ with
$\Gamma(t_k) = z_k$ and
$\Gamma(t) = \tilde z_{k-1} \alpha_{i_k}(\theta(t))$
for some strictly increasing function $\theta$.
Invertibility of the map $\Phi$ in Theorem \ref{theo:Bruhat}
implies that $\tilde z_{k-1} = z_{k-1}$.
Furthermore, $z_{k-1} \alpha_{i_k}(\theta) = \Gamma(t)$
for some $t \in (0,t_k]$.
\end{proof}

% 20190403
The following result was inspired by conversations
with B. Shapiro and M. Shapiro (see also Section \ref{sect:finalremarkso}).

\begin{coro}
\label{coro:freesign}
Let $\sigma = a_{i_1} \cdots a_{i_k} \in S_{n+1}$ be a reduced word.
Let $t_1, \ldots, t_k \in \RR\smallsetminus\{0\}$;
for $1 \le i \le k$, let $\varepsilon_i = \sign(t_i) \in \{\pm1\}$.
 Let
\[ L = \jacobi_{i_1}(t_1)\jacobi_{i_2}(t_2) \cdots \jacobi_{i_k}(t_k);
\qquad
z =
(\acute a_{i_1})^{\varepsilon_1} \cdots (\acute a_{i_k})^{\varepsilon_k}
\in \widetilde \B_{n+1}^{+}; \]
then $L \in \bQ^{-1}[\Bru_z]$.
\end{coro}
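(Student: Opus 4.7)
My plan is to proceed by induction on $k$, the length of the reduced word, closely mirroring the proof of Lemma \ref{lemma:posbruhat} while tracking signs. The base case $k=0$ is immediate: $L=I$, $z=1$, and $\bQ(I)=1\in\Bru_1$.

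For the inductive step, I would set $L_{k-1}=\jacobi_{i_1}(t_1)\cdots\jacobi_{i_{k-1}}(t_{k-1})$ and $z_{k-1}=(\acute a_{i_1})^{\varepsilon_1}\cdots(\acute a_{i_{k-1}})^{\varepsilon_{k-1}}$, so that the induction hypothesis yields $\bQ(L_{k-1})\in\Bru_{z_{k-1}}$. I would then consider the path $\Gamma_L(s)=L_{k-1}\jacobi_{i_k}(\varepsilon_k s)$ for $s\in[0,|t_k|]$ and its image $\Gamma=\bQ\circ\Gamma_L$ in $\cU_1\subset\Spin_{n+1}$. Since $\Gamma_L'(s)=\varepsilon_k\,\Gamma_L(s)\,\fl_{i_k}$, Lemma \ref{lemma:al} gives $\Gamma'(s)=\varepsilon_k c(s)\,\Gamma(s)\,\fa_{i_k}$ for some positive smooth function $c$, so that
\[ \Gamma(s)=\bQ(L_{k-1})\,\alpha_{i_k}(\varepsilon_k\theta(s)), \qquad \theta(s)=\int_0^s c(\tau)\,d\tau, \]
with $\theta$ smooth and strictly increasing from $\theta(0)=0$.

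The hard part will be establishing $\theta(|t_k|)<\pi$, which is what allows Theorem \ref{theo:Bruhat} to be applied. I would argue by contradiction: if $\theta(s_\ast)=\pi$ for some $s_\ast\in(0,|t_k|]$, then $\Gamma(s_\ast)=\bQ(L_{k-1})\,\hat a_{i_k}^{\varepsilon_k}$ would lie in $\cU_1$, forcing $\Pi(\bQ(L_{k-1}))\,\Pi(\hat a_{i_k})\in\cU_I$. But $\Pi(\hat a_{i_k})$ is the diagonal matrix with $-1$'s in positions $i_k$ and $i_k+1$ and $+1$'s elsewhere, so right multiplication by it negates columns $i_k$ and $i_k+1$ of $\Pi(\bQ(L_{k-1}))\in\cU_I$. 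This flips the sign of exactly the $i_k\times i_k$ northwest minor, producing a matrix with a negative northwest minor, contradicting $\cU_1\subseteq\Pi^{-1}[\cU_I]$.

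With $\theta(|t_k|)\in(0,\pi)$ secured, I would invoke the diffeomorphism of Theorem \ref{theo:Bruhat} with $q=1$, $z_1=z_{k-1}$ and $\varepsilon=\varepsilon_k$ to place $\bQ(L_k)=\Gamma(|t_k|)$ in $\Bru_{z_{k-1}(\acute a_{i_k})^{\varepsilon_k}}=\Bru_{z_k}$, closing the induction.
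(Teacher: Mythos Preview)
Your proof is correct and follows the same inductive skeleton as the paper's: set up $L_{k-1}$ and $z_{k-1}$, invoke Lemma \ref{lemma:al} to convert the $\jacobi_{i_k}$-flow into an $\alpha_{i_k}$-flow with increasing reparametrization $\theta$, and finish with Theorem \ref{theo:Bruhat}.

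The one genuine difference is how you bound $\theta(|t_k|)<\pi$. The paper instead appeals to the fact (cited near Equation \ref{equation:freesign}, from \cite{Berenstein-Fomin-Zelevinsky,Shapiro-Shapiro-Vainshtein1}) that $L_{k-1}\jacobi_{i_k}(t)\in\bQ^{-1}[\Bru_\sigma]$ for every $t\ne 0$; since $\tilde z_{k-1}\alpha_{i_k}(\pm\pi)$ lies in $\Bru_{\sigma_{k-1}}\ne\Bru_\sigma$, the value $\pm\pi$ is never attained. Your argument is more self-contained: you observe directly that $\Pi(\hat a_{i_k})$ negates columns $i_k$ and $i_k+1$, flipping the sign of exactly the $i_k\times i_k$ northwest minor of $\Pi(\bQ(L_{k-1}))\in\cU_I$, so the product cannot lie in $\cU_I$ --- contradicting $\Gamma(s_\ast)\in\bQ[\Lo_{n+1}^1]=\cU_1$. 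This is essentially the mechanism the paper uses earlier in the proof of Lemma \ref{lemma:posbruhat} (there phrased as $z_{k-1}\hat a_{i_k}\in\cU_{\hat a_{i_k}}$, hence $\notin\cU_1$), so you are in effect reusing that device rather than importing the external Bruhat-cell fact. Both routes are short; yours avoids a literature reference at the cost of a small explicit computation.
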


\begin{proof}
The proof is by induction on $k$;
the case $k = 0$ is trivial and the case $k = 1$ is easy.
Let $L_k = L$, $z_k = z$,
\[ L_{k-1} = \jacobi_{i_1}(t_1)\jacobi_{i_2}(t_2) \cdots
\jacobi_{i_{k-1}}(t_{k-1}),
\qquad
z_{k-1} =
(\acute a_{i_1})^{\varepsilon_1} \cdots
(\acute a_{i_{k-1}})^{\varepsilon_{k-1}}
\in \widetilde \B_{n+1}^{+}; \]
by induction hypothesis, 
$\tilde z_{k-1} = \bQ(L_{k-1}) \in \Bru_{z_{k-1}}$.
From Theorem \ref{theo:Bruhat}, we have
$\tilde z_{k-1} \alpha_{i_k}(\theta) \in \Bru_{z_k}$
provided $\sign(\theta) = \varepsilon_k$ and $|\theta| < \pi$;
also, $\tilde z_{k-1} \alpha_{i_k}(\pm\pi) \notin \Bru_\sigma$.
Thus, from Lemma \ref{lemma:al},
$\bQ(L_{k-1} \jacobi_{i_k}(t)) = \tilde z_{k-1} \alpha_{i_k}(\theta(t))$
where $\theta: \RR \to \RR$ is a strictly increasing function
with $\theta(0) = 0$.
As remarked near Equation \ref{equation:freesign},
$L_{k-1} \jacobi_{i_1}(t) \in \bQ^{-1}[\Bru_\sigma]$
for all $t \in \RR \smallsetminus \{0\}$
and therefore $|\theta(t)| < \pi$ for all $t \in \RR$.
Thus, if $\sign(t) = \varepsilon_k$ we have
$\bQ(L_{k-1} \jacobi_{i_k}(t)) \in \Bru_{z_k}$, as desired.
\end{proof}

% 20190403

\begin{proof}[Proof of Theorem \ref{theo:Bruhat}]
%We prove the first claim; the second one is similar. 
Given reduced words 
$\sigma_1=a_{i_1}\cdots a_{i_k}\vartriangleleft 
a_{i_1}\cdots a_{i_k}a_j=\sigma_0$ for 
consecutive permutations in $S_{n+1}$, 
signs $\varepsilon_1,\ldots,\varepsilon_k,\varepsilon
\in\{\pm1\}$, and $q\in\Quat_{n+1}$, 
we want to prove that the map 
$\Phi(z,\theta)=z\alpha_j(\varepsilon\theta)$ is a diffeomorphism 
between $\Bru_{qz_1}\times(0,\pi)$ and $\Bru_{qz_{0}}$, where 
$z_1=(\acute a_{i_1})^{\varepsilon_1}\cdots(\acute a_{i_k})^{\varepsilon_k}$ and 
$z_0=z_1(\acute a_j)^{\varepsilon}$. 
Notice that $\Phi(qz_1,\frac{\pi}{2}) = qz_0$.
We present the case $\varepsilon=+1$; the other case is similar. 

We first prove that for all $z \in \Bru_{qz_1}$ and $\theta \in (0,\pi)$
we have $\Phi(z,\theta) \in \Bru_{qz_0}$. 
By connectivity, 
it suffices to prove that $\Phi(z,\theta) \in \Bru_{\sigma_0}$
(the unsigned Bruhat cell).
Abusing the distinction between $z \in \Spin_{n+1}$ and $\Pi(z) \in \SO_{n+1}$, we write the signed Bruhat decomposition 
$z = U_1 q z_1 U_2 \in \Bru_{qz_1}$. 
Given $\theta \in (0,\pi)$, 
we have $\Phi(z,\theta) = U_1 q z_1 U_2 \alpha_j(\theta)$.
We have $U_2 \alpha_j(\theta) = \acute a_j \lambda_j(t) U_3$
for some $t \in \RR$ and $U_3 \in \Up_{n+1}^{+}$
and therefore
$\Phi(z,\theta) = U_1 q z_0 \lambda_j(t) U_3$.
But since $\sigma_1\vartriangleleft\sigma_1 a_j=\sigma_0$, we have
%$\acute \sigma_1 \lambda_j(t) = U_4 \acute \sigma_1$
$z_0 \lambda_j(t) = U_4 z_0$
where $U_4 \in \Up_{n+1}$ has at most a single nonzero nondiagonal entry at position
$(j^{\sigma_1^{-1}}, (j+1)^{\sigma_1^{-1}}) =
((j+1)^{\sigma_0^{-1}}, j^{\sigma_0^{-1}})$.
We have $\Phi(z,\theta) = U_1 q U_4 z_0 U_3\in\Bru_{\sigma_0}$, as desired.

At this point we know that 
$\Phi: \Bru_{z_1} \times (0,\pi) \to \Bru_{z_0}$
is a smooth function.
It is also injective.
Indeed, assume $z \alpha_j(\theta) = \tilde z \alpha_j(\tilde\theta)$.
If $\theta < \tilde\theta$ we have both
$z \in \Bru_{z_1}$ and
$z = \tilde z \alpha_j(\tilde\theta - \theta) \in \Bru_{z_0}$,
contradicting the disjointness of the cells.
The case $\theta > \tilde\theta$ is similar and the case
$\theta = \tilde\theta$ is trivial.

Given $U_2 \in \Up_{n+1}^{+}$, the matrix $\acute a_j U_2$ is almost upper,
with a positive entry in position $(j+1,j)$ 
(recall we are identifying $\acute a_j$ and $\Pi(\acute a_j)$).
There exist unique $r > 0$ and $\theta \in (0,\pi)$ such that
$(\acute a_j U_2)_{j+1,j} = r\sin(\theta)$,
$(\acute a_j U_2)_{j+1,j+1} = r\cos(\theta)$.
The matrix $U_3 = \acute a_j U_2 \alpha_j(-\theta)$ 
also belongs to $\Up_{n+1}^{+}$.
Let $\theta_j: \Up_{n+1}^{+} \to (0,\pi)$, $U_2\mapsto \theta$, 
be the real analytic function defined by the above argument.

Given $z \in \Bru_{qz_0}$, write $z = U_1 qz_0 U_2$,
$U_1,U_2 \in \Up_{n+1}^{+}$.
Notice that 
\[ z \alpha_j(-\theta_j(U_2)) =
U_1 qz_1 (\acute a_j U_2 \alpha_j(-\theta_j(U_2))) =
U_1 qz_1 U_3 \in \Bru_{qz_1}. \]
Thus, $\Phi(z \alpha_j(-\theta_j(U_2)), \theta_j(U_2)) = z$,
proving surjectivity of $\Phi$.
Injectivity implies that even though $U_2$ is not well defined
(as a function of $z$), $\theta_i(U_2)$ is well defined 
(and smooth, again as a function of $z$):
this gives a formula for $\Phi^{-1}$ and proves its smoothness.
\end{proof}

% For $z_0 = z_Q \acute\sigma_0$,
% $z_Q \in \Quat_{n+1}$, $\sigma_0 \in S_{n+1}$, $a_i \le_L \sigma_0$
% define $\theta_i: \Bru_{z_0} \to (0,\pi)$ by
% $\theta_i(z) = \theta_i(U_2)$ where $z = U_1 z_0 U_2$,
% $U_i \in \Up_{n+1}^{+}$.
% The function $\theta_i$ is well defined and real analytic.

\begin{rem}
\label{rem:bigtheta}
The following real analytic function constructed in the proof above turns out to be useful (see \cite{Goulart-Saldanha}).
Given $q \in \Quat_{n+1}$, $j\in\nmesmo$ and 
$\sigma_0\in S_{n+1}$ such that 
$a_j \le_L \sigma_0$, we define 
$\Theta_j: \Bru_{q\acute\sigma_0} \to (0,\pi)$ as follows: 
write $\sigma_1 \vartriangleleft \sigma_0 = \sigma_1 a_j$
and set $\Theta_j(z) = \theta \in (0,\pi)$
if and only if $z \alpha_j(-\theta) \in \Bru_{q \acute\sigma_1}$.
\end{rem}

% \begin{lemma}
% \label{lemma:thetaij}
% Consider $z_Q \in \Quat_{n+1}$, $\sigma \in S_{n+1}$ and
% $i, j \in \nmesmo$ with $|i-j| > 1$.
% Assume that
% $\sigma \vartriangleleft \sigma a_i$ and
% $\sigma \vartriangleleft \sigma a_j$.
% Then the funtions $\Theta_i$ and $\Theta_j$
% extend smoothly to
% $B = \Bru_{z_Q \acute\sigma} \sqcup
% \Bru_{z_Q \acute\sigma \acute a_i} \sqcup
% \Bru_{z_Q \acute\sigma \acute a_j} \sqcup
% \Bru_{z_Q \acute\sigma \acute a_i \acute a_j}$.
% Also, 
% $\Theta_i(z \alpha_j(\theta)) = \Theta_i(z)$ and
% $\Theta_j(z \alpha_i(\theta)) = \Theta_j(z)$,
% provided all elements are in $B$.
% \end{lemma}

% \begin{proof}
% From Lemma \ref{lemma:bruhatstep},
% the map $\Phi: \Bru_{z_0 \acute\sigma} \times [0,\pi) \times [0,\pi) \to B$
% given by
% $\Phi(z,\theta_1,\theta_2) = z\alpha_i(\theta_1)\alpha_j(\theta_2)$
% is a diffeomorphism.
% We have
% $\Theta_i(\Phi(z,\theta_1,\theta_2)) = \theta_1$ and
% $\Theta_j(\Phi(z,\theta_1,\theta_2)) = \theta_2$.
% Also,
% $\Phi(z,\theta_1,\theta_2) \alpha_i(\theta) =
% \Phi(z,\theta_1+\theta,\theta_2)$.
% \end{proof}

% v3v4: same change as in introduction.
% Theorem \ref{theo:pathcoordinates} says that smooth locally convex curves are everywhere transversal to Bruhat cells. 
Theorem \ref{theo:pathcoordinates} 
gives a transversality condition
between smooth locally convex curves and Bruhat cells. 
More explicitly, given $q\in\Quat_{n+1}$ and $\sigma\in S_{n+1}\smallsetminus\{\eta\}$, let $z_0=q\acute\sigma$. 
We introduce slice coordinates 
$(u_1,\ldots,u_{\inv(\sigma)},f_1,\ldots,f_k)$ 
in an open neighborhood $\cU_{z_0}$ of the non-open 
signed Bruhat cell $\Bru_{z_0}$. 
In these coordinates, 
$\Bru_{z_0}=\{z\in\cU_{z_0}\,\vert\,f_1(z)=\cdots=f_k(z)=0\}$. 
Also, the last coordinate increases along every smooth 
locally convex curve $\Gamma:J\to\Spin_{n+1}$: 
we have $(f_k\circ\Gamma)'(t)>0$ for all $t\in J$. 

\begin{proof}[Proof of Theorem \ref{theo:pathcoordinates}]

We present an explicit construction of the 
coordinate functions $u_i, f_j$. 
% for $z_0=q\acute\sigma$. 
% We use triangular coordinates in $A = \cU_{z_0}$.
Write
$\Lo_{n+1}^{1} = \Lo_{\sigma^{-1}} \Lo_{\sigma^{-1}\eta}$,
i.e., write $L \in \Lo_{n+1}^{1}$ as $L = L_1L_2$,
$L_1 \in \Lo_{\sigma^{-1}}$, $L_2 \in  \Lo_{\sigma^{-1}\eta}$
(see Equation \ref{equation:Upsigma} in Section \ref{sect:symmetric} 
for the subgroups $\Lo_\sigma \subseteq \Lo_{n+1}^1$,
$\Up_\sigma \subseteq \Up_{n+1}^{1}$). 
As in the proof of Theorem \ref{theo:Bruhat}, we ignore the 
distinction between $z \in \Spin_{n+1}$ and $\Pi(z) \in \SO_{n+1}$.
Notice that if $L_1 \in \Lo_{\sigma^{-1}}$ then $z_0 L_1 = U_1 z_0$
for $U_1 = z_0 L_1 z_0^{-1} \in \Up_{\sigma}$.
Thus, every $z \in \cU_{z_0}$ can be uniquely written as
$z = \bQ(U_1 z_0 L_2)$, $U_1 \in \Up_{\sigma}$,
$L_2 \in \Lo_{\sigma^{-1}\eta}$.
Notice that if $U_1, \tilde U_1 \in \Up_{\sigma}$
and  $L_2 \in  \Lo_{\sigma^{-1}\eta}$ then
$\bQ(U_1 z_0 L_2)$ and
$\bQ(\tilde U_1 z_0 L_2)$
belong to the same Bruhat cell.
Also, $z = \bQ(U_1 z_0 L_2) \in \Bru_{z_0}$
if and only if $L_2 = I$.
The maps $u,f$ are defined in terms of
$U_1\in \Up_\sigma$ and 
$z_0 L_2 \in z_0 \Lo_{\sigma^{-1}\eta}$, 
respectively;
in other words, we define affine maps 
$u_U:  \Up_\sigma\to \RR^{\inv(\sigma)}$,  
$f_L:  z_0 \Lo_{\sigma^{-1}\eta} \to \RR^k$ and
set $u(\bQ(U_1 z_0 L_2))=u_U(U_1)$, 
$f(\bQ(U_1 z_0 L_2)) = f_L(z_0 L_2)$. 
From now on, we focus on $f$ ($u$ is similar).

We describe a generic element of the set $z_0 \Lo_{\sigma^{-1}\eta}$. 
%,or, more concretely, of $\Pi(z_0) \Lo_{\sigma^{-1}\eta} \subset \GL_{n+1}$.
Recall we identify $z_0$ with the orthogonal matrix $\Pi(z_0)$.
In order to obtain
$M \in z_0 \Lo_{\sigma^{-1}\eta}$,
we introduce free variables in place of the zeroes of $z_0$
which are below and to the left of nonzero entries.
Call these entries $x_1, \ldots, x_k$,
where we number them in the reading order: top to bottom and left to right.
For each $i\in\nmaisum$, 
apply the sign of the entry $(z_0)_{i,i^\sigma}$ to all of the $i$-th row.
Thus, for instance, an element $z_0$ as below yields a set $z_0\Lo_{\sigma^{-1}\eta}$ with elements $M$ of the following general form:
\[ \Pi(z_0) = \begin{pmatrix}
0 & -1 & 0 & 0 \\ 0 & 0 & 0 & -1 \\ -1 & 0 & 0 & 0 \\ 0 & 0 & 1 & 0
\end{pmatrix} 
%\quad x_1,x_2,x_3\in\RR 
\quad \rightarrow \quad
M = \begin{pmatrix}
0 & -1 & 0 & 0 \\ 0 & -x_1 & 0 & -1 \\ -1 & 0 & 0 & 0 \\ x_2 & x_3 & 1 & 0
\end{pmatrix} \in z_0 \Lo_{\sigma^{-1}\eta}. \]
Finally, set $f_L(M) = (x_1, \ldots, x_k)$.
If $x_k$ is in position $(i,j)$ set
$\tilde k = n - i + 2$,
$\bi_0 = \{i, \ldots, n+1\}$ and $\bi_2 = \{1, \ldots, \tilde k - 1, j\}$
(see Lemma \ref{lemma:positivespeed}).
The desired property of
$f_k = \pm (\Lambda^{\tilde k}(M))_{\bi_0,\bi_2}$
follows from
Remark \ref{rem:explicitpositivespeed}.
Equivalently, 
if $\Gamma(t) = z_0 \bQ(\Gamma_L(t))$ then
$f_k(\Gamma(t)) = (\Gamma_L(t))_{j+1,j}$,
which is clearly strictly increasing with positive derivative.
\end{proof}

\begin{rem}
\label{rem:explicitpathcoordinates}
For $z_0\in\widetilde\B^+_{n+1}$, the open set $\cU_{z_0}$ is a 
tubular neighborhood in $\Spin_{n+1}$ 
of the signed Bruhat cell 
$\Bru_{z_0}$, with projection map 
$\Pi_{z_0}: \cU_{z_0} \to \Bru_{z_0}$, % \subset \cU_{z_0}$, 
$\Pi_{z_0}(\bQ(U_1 z_0 L_2)) = \bQ(U_1 z_0)$.  
The smooth map $f=(f_1,\ldots,f_k)$ obtained in Theorem  \ref{theo:pathcoordinates} 
parameterizes transversal sections 
of this tubular neighborhood. 
\end{rem}

We now prove Theorem \ref{theo:chopadvance}.
Consider a locally convex curve
$\Gamma: (-\epsilon,\epsilon) \to \Spin_{n+1}$
with $\Gamma(0) = z$.
We need to prove that there exists $\epsilon_a \in (0,\epsilon]$
such that, for all $t \in (0,\epsilon_a]$ we have
$\Gamma(t) \in \Bru_{\adv(z)}$
(the corresponding claim for $\chop$ is similar).
Recall that
the maps $\chop, \adv: \Spin_{n+1} \to \acute\eta \Quat_{n+1} \subset
\widetilde \B_{n+1}^{+}$
are defined by 
\[
\adv(z) = q_a \acute\eta = z_0 \longacute(\rho_0^{-1}), \quad
\chop(z) \acute\rho_0 = z_0 , \quad
z\in\Bru_{z_0}\subset\Bru_{\sigma_0},  
\]
for
$z_0=q_a \acute\sigma_0$,
$\sigma_0=\sigma_{z_0}$,
$\eta = \sigma_0\rho_0^{-1}$ and $q_a\in\Quat_{n+1}$
(see Equation \ref{equation:chopadvance}).  

\begin{proof}[Proof of Theorem \ref{theo:chopadvance}]
If necessary, apply a projective transformation so that
$z=q_a \bQ(L_0)$, $L_0\in\Pos_{\sigma_0}$.
For any locally convex curve $\Gamma$ as in the statement,
there exists $\epsilon_a\in(0,\epsilon)$ such that
the restriction $\Gamma|_{[-\epsilon_a,\epsilon_a]}$ can be
written in triangular coordinates: 
$\Gamma(t)=q_a \bQ(\Gamma_L(t))$, $\Gamma_L(0)=L_0$.
It follows from Lemma \ref{lemma:transition}
that $\Gamma_L(t) \in \Pos_{\eta}$ for any $t \in (0,\epsilon_a]$.
Thus, $\Gamma(t)\in\Bru_{\adv(z)}$ for all $t\in(0,\epsilon_a]$.
The proof for $\chop$ is similar.
\end{proof}

\section{Multiplicities revisited}
\label{sect:mult}

% In the whole section:
% v3v4 s/C^k/C^r, s/k_\bullet/r_\bullet

In this section we present the proof of Theorem \ref{theo:mult}. 
In its statement, the locally convex curves are supposed to be smooth. 
In Lemma \ref{lemma:xmult} below, however,
we consider curves $\Gamma$ of differentiability class $C^r$.
As we shall see, Lemma \ref{lemma:xmult} not only implies
Theorem \ref{theo:mult}
but also the same statement for curves of class $C^r$
with $r \ge r_\bullet=\lfloor\left(\frac{n+1}2\right)^2\rfloor$.

%Set $k_\bullet=\lceil\left(\frac{n+1}{2}\right)^2+n\rceil$:  we shall study the space $\cL_n^{[C^{k_\bullet}]}$. This value of $k_\bullet$ is chosen so as to be sufficiently large for several constructions to be performed below. We are not particularly interested in investigating precisely which differentiability class is required for each result.

Given a matrix $Q \in \SO_{n+1}$, for each $j\in\nmesmo$ let  
\begin{equation*}
\label{equation:southwestminor}
\swminor(Q,j)=\operatorname{submatrix}
(Q,(n-j+2,\ldots,n+1),(1,\ldots,j))\in\RR^{j\times j}, 
\end{equation*}
be its southwest $j\times j$ block.

Given a locally convex curve $\Gamma:J\to\Spin_{n+1}$, 
for each $j\in\nmesmo$ we define
\begin{equation}
\label{eq:mj}
m_j = m_{\Gamma;j}: J \to \RR,\qquad m_j(t)%=m_{\Gamma;j}(t) 
=\det(\swminor(\Pi(\Gamma(t)),j)).
\end{equation}
Write $\mult_j(\Gamma;t_0) = \mu$
if $t_0$ is a zero of multiplicity $\mu$
of the function $m_j$, that is, if
$(t-t_0)^{(-\mu)} m_j(t)$
is continuous and non-zero at $t = t_0$.
Notice that for a general locally convex curve $\Gamma$,
$\mult_j(\Gamma;t_0)$ as above is not always well defined.
% if $\Gamma$ is of class $C^{k_\bullet}$, however,
% $\mult_j(\Gamma;t_0) = \mu \in \NN$ is well defined.
Let the \emph{multiplicity vector} be
$ \mult(\Gamma;t_0) = \left(
\mult_1(\Gamma;t_0), \mult_2(\Gamma;t_0), \ldots,
\mult_n(\Gamma;t_0)
\right)$
(if each coordinate is well defined). 
Recall that $\Gamma(t_0) \in \Bru_\eta$ if and only if
there exist upper triangular matrices $U_1$ and $U_2$ such that
$\Gamma(t_0) = U_1 \acute\eta U_2$.
It is a basic fact of linear algebra that this happens if and only if
$m_j(t_0) \ne 0$ for all $j$.
Thus, $\Gamma(t_0) \in \Bru_\eta$ if and only if $\mult(\Gamma;t_0) = 0$.
%In other words, for $t_\bullet \in (0,1)$, $\mult(\Gamma;t_\bullet) = 0$ if and only if $t_\bullet \notin \sing(\Gamma)$;
% Roots of the functions $m_j$ indicate times $t$
% for which $\Gamma(t) \notin \Bru_{\eta}$. 

\begin{lemma}
\label{lemma:xmult}
Consider a locally convex curve $\Gamma: J \to \Spin_{n+1}$,
where $J \subseteq \RR$ is an open interval.
Consider $t_0 \in J$ and $\sigma \in S_{n+1}$ such that
$\Gamma(t_0) \in \Bru_\rho$, $\rho = \eta\sigma$.
% v3v4 s/k/r in the statement
If $r \ge \mult_j(\sigma)$ for all $j \in \nmesmo$
and $\Gamma$ is of class $C^r$ then
$\mult(\Gamma;t_0)$ is well defined and
$\mult(\Gamma;t_0) = \mult(\sigma)$.
\end{lemma}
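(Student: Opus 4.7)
The plan is to pass to a triangular coordinate chart centered at a signed permutation matrix and extract the orders of vanishing from the integral formula of Lemma \ref{lemma:explicitGamma}. I would first reduce, via a projective transformation $z \mapsto \bQ(U^{-1}z)$ with a suitable $U \in \Up_{n+1}^{+}$, to the case $\Gamma(t_0) = q\acute\rho$, where $q \in \Quat_{n+1}$ is the element determined by the signed Bruhat cell containing $\Gamma(t_0)$. This reduction is harmless: such transformations preserve the class of locally convex $C^r$ curves; moreover, writing $\Pi(\Gamma(t)) = U\,\Pi(\Gamma^U(t))\,R(t)$ with $R(t) \in \Up_{n+1}^{+}$ smooth, the southwest $j \times j$ block of $\Pi(\Gamma(t))$ factors as the product of the bottom-right $j \times j$ block of $U$, the southwest block of $\Pi(\Gamma^U(t))$, and the top-left $j \times j$ block of $R(t)$, so $m_j$ only picks up a positive smooth factor and its zero multiplicities are preserved.

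Next, on a neighborhood of $t_0$, I would write $\Gamma(t) = q\acute\rho \cdot \bQ(\Gamma_L(t))$, where $\Gamma_L: J \to \Lo_{n+1}^{1}$ is a convex $C^r$ curve with $\Gamma_L(t_0) = I$. By Lemma \ref{lemma:pihat}, $\Pi(q\acute\rho)$ is a signed permutation matrix with nonzero entries exactly at positions $(i, i^\rho)$. Hence row $i$ of $\Pi(q\acute\rho) \cdot \bQ(\Gamma_L(t))$ equals $\pm$ row $i^\rho$ of $\bQ(\Gamma_L(t))$, and its southwest $j \times j$ block, after row-reordering, gives $m_j(t) = \varepsilon_j \det((\bQ(\Gamma_L(t)))_{\bi_0, \bi_1})$ for a constant sign $\varepsilon_j \in \{\pm 1\}$, where $\bi_1 = \{1, \ldots, j\}$ and $\bi_0$ is the increasing listing of $\{i^\rho : n - j + 2 \leq i \leq n+1\}$. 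Using $\rho = \eta\sigma$ and $i^{\eta\sigma} = (n+2-i)^\sigma$, one checks $\bi_0 = \sigma(\{1, \ldots, j\})$ as a set, hence $\sum(\bi_0) - \sum(\bi_1) = \mult_j(\sigma)$.

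The QR factorization $\Gamma_L(t) = \bQ(\Gamma_L(t)) R(t)$ with smooth, positive-diagonal $R(t)$, combined with $\bi_1$ being an initial segment, yields
\[
\det((\bQ(\Gamma_L(t)))_{\bi_0, \bi_1}) = \frac{\det((\Gamma_L(t))_{\bi_0, \bi_1})}{R_{11}(t) \cdots R_{jj}(t)},
\]
so the multiplicity of $m_j$ at $t_0$ equals that of $\det((\Gamma_L(t))_{\bi_0, \bi_1})$. Applying Lemma \ref{lemma:explicitGamma} with $L_0 = I$ and $l = \mult_j(\sigma) = \sum(\bi_0) - \sum(\bi_1)$, this determinant becomes a finite sum of iterated integrals of the positive measures $\mu_{j_s}$ over the $l$-simplex $\{t_0 \leq \tau_1 \leq \cdots \leq \tau_l \leq t\}$, indexed by chains $\bi_0 \to \cdots \to \bi_1$. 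Such chains exist because $\bi_1$ is the minimum element of $\nmaisum^{(j)}$ in the partial order, so $\bi_0 \geq \bi_1$ holds automatically.

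A Taylor expansion of each integrand at $t_0$ shows that every summand equals $\beta_{j_1}(t_0) \cdots \beta_{j_l}(t_0) \cdot (t - t_0)^l / l! + o((t - t_0)^l)$; since all $\beta_i$ are strictly positive on $J$, the contributions add constructively, producing $m_j(t) = C_j (t - t_0)^{\mult_j(\sigma)} + o((t - t_0)^{\mult_j(\sigma)})$ with $C_j \neq 0$. The hypothesis $r \geq \mult_j(\sigma)$ makes $m_j$ of class $C^r$ to sufficient order for this $l$-th-order Taylor expansion to be legitimate, so the multiplicity of the zero is well defined and equals $\mult_j(\sigma)$, as desired. The main obstacle is the combinatorial identification $\bi_0 = \sigma(\{1, \ldots, j\})$ together with the sign and positive-scaling bookkeeping across the projective transformation and QR-to-LU conversion: each step is routine but easy to misstep on.
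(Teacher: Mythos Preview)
Your argument is correct and takes a genuinely different route from the paper's. Both proofs begin with the same reduction (projective transformation to a signed permutation, then passage to triangular coordinates), and both identify the southwest minor $m_j$ with a suitable minor of the lower--triangular curve. The divergence comes at the endgame: the paper computes each entry of $z_0\Gamma_L(t)$ as a monomial $t^{\mu}/\mu!$ times a unit, pulls the powers of $t$ out of the determinant row by row and column by column, and is then left with showing that a certain constant matrix is nonsingular, which it does via the Vandermonde-type Lemma~\ref{lemma:vandert}. You instead invoke Lemma~\ref{lemma:explicitGamma} to write $(\Lambda^j(\Gamma_L(t)))_{\bi_0,\bi_1}$ directly as a sum over chains $\bi_0 \to \cdots \to \bi_1$ of iterated integrals, observe that every summand is positive (for $t>t_0$) with leading term $\prod_s \beta_{j_s}(t_0)\,(t-t_0)^l/l!$, and conclude that no cancellation can occur. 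Your approach buys a cleaner non-vanishing argument---positivity replaces the Vandermonde identity and makes Lemma~\ref{lemma:vandert} unnecessary---while the paper's approach is more explicit about the individual matrix entries. One small remark: your final sentence overstates the role of the hypothesis $r \ge \mult_j(\sigma)$; your integral estimate only needs the $\beta_i$ continuous, so $\Gamma\in C^1$ already suffices for the limit $(t-t_0)^{-l}m_j(t)$ to exist and be nonzero in your framework.
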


Theorem \ref{theo:mult} is a direct consequence of Lemma \ref{lemma:xmult}.
These results can be interpreted as
defining the multiplicity vector for general 
locally convex curves, regardless of their class of differentiability. 
They also justify the notation $\mult(\sigma)$. 
The constant $r_\bullet$ in the first paragraph of this section
is obtained as
$r_\bullet = \mult_j(\eta)$, $j = \lfloor \frac{n+1}{2} \rfloor$,
% v3v4 s/k/r as in the statement
the smallest value of $r$ for which Lemma \ref{lemma:xmult}
can be applied for any permutation $\sigma \in S_{n+1}$. 
% We restate it as follows: 
% given a locally convex curve $\Gamma:J\to\Spin_{n+1}$ 
% of class $C^{k_\bullet}$, 
% \[\forall t_0\in J \, \forall\sigma\in S_{n+1}\,
% \left(\Gamma(t_0)\in\Bru_{\eta\sigma} \leftrightarrow 
% \mult(\Gamma;t_0)=\mult(\sigma)\right)\]
%We use this formula to define $\mult(\Gamma;t_0)$ even if $\Gamma$ is not sufficiently differentiable for multiplicities to make sense in analytical terms.
Before we present the proof of Lemma \ref{lemma:xmult},
let us see an easy result in linear algebra.
% which will be used again later.

\begin{lemma}
\label{lemma:vandert}
Let $d_1, d_2, \cdots, d_k$ be non-negative integers.
Let $M$ be the $k\times k$ matrix with entries
\[ M_{i,1} = t^{d_i}, \qquad
M_{i,j+1} = \frac{d}{dt} M_{i,j}. \]
Then
\[ \det(M) = Kt^\mu; \qquad
K = \prod_{i_0 < i_1} (d_{i_1} - d_{i_0}); \qquad 
\mu = -\frac{k(k-1)}{2} + \sum_i d_i. \]
If $\tilde M$ is obtained from $M$
by substituting $1$ for $t$ then $\det(\tilde M) = K \ne 0$.
\end{lemma}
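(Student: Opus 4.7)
The plan is to factor $\det(M)$ as a power of $t$ times a $t$-independent determinant, and then recognize the latter as a Vandermonde determinant in disguise.

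First, I would compute $M_{i,j}$ explicitly by induction on $j$. With the convention $(d)_m = d(d-1)\cdots(d-m+1)$ for the falling factorial (so $(d)_0 = 1$ and $(d)_m = 0$ whenever $m > d$), one has
\[ M_{i,j} \;=\; (d_i)_{j-1}\, t^{d_i - j + 1}. \]
When $j - 1 > d_i$, the coefficient $(d_i)_{j-1}$ is zero, so $M_{i,j} = 0$ and the formal negative power of $t$ is immaterial.

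Second, I would pull $t^{d_i - k + 1}$ out of row $i$ and $t^{k - j}$ out of column $j$. The total power of $t$ extracted is
\[ \sum_{i=1}^{k} (d_i - k + 1) \;+\; \sum_{j=1}^{k} (k - j) \;=\; \sum_i d_i \;-\; \frac{k(k-1)}{2} \;=\; \mu, \]
leaving the $t$-free matrix $N$ with $N_{i,j} = (d_i)_{j-1}$. Thus $\det(M) = t^{\mu} \det(N)$.

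Third, I would identify $\det(N)$ as Vandermonde. Since $(d)_{j-1}$ is a monic polynomial of degree $j - 1$ in $d$, we have $N = V C$, where $V_{i,j} = d_i^{j-1}$ is the usual Vandermonde matrix and $C$ is the (upper unitriangular) change-of-basis matrix from the monomial basis to the falling-factorial basis, whose entries are signed Stirling numbers of the first kind. Hence
\[ \det(N) \;=\; \det(V)\,\det(C) \;=\; \prod_{i_0 < i_1} (d_{i_1} - d_{i_0}) \;=\; K, \]
by the classical Vandermonde formula, so $\det(M) = K\, t^{\mu}$. Setting $t = 1$ gives $\det(\tilde M) = K$, which is nonzero exactly when the $d_i$ are distinct, as will be the case in the intended applications.

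The closest thing to an obstacle is bookkeeping: choosing the row/column factorization that leaves a clean Vandermonde-type determinant, and verifying that rows with small $d_i$ (which contain genuine zero entries after sufficient differentiation) pose no trouble---this is handled transparently by the vanishing of $(d_i)_{j-1}$ in the falling-factorial convention.
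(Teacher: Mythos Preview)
Your proof is correct and follows essentially the same approach as the paper: both factor out the power $t^{\mu}$ to reduce to the $t$-free matrix with entries $(d_i)_{j-1}$, and both then identify its determinant as the Vandermonde product $K$. The only cosmetic difference is that the paper performs explicit column operations to turn $\tilde M$ into a Vandermonde matrix, whereas you package the same column operations as the factorization $N = VC$ with $C$ upper unitriangular; your observation that $K \ne 0$ requires the $d_i$ to be distinct is also appropriate, since this is implicit in the paper's intended application.
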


\begin{proof}
We have $M_{i,j} = \tilde M_{i,j} t^{(d_i + 1 - j)}$.
All monomials in the expansion of $\det(M)$ have therefore degree $\mu$.
% to obtain a matrix $\tilde M$ with constant entries.
The first column of $\tilde M$ consists of ones;
the second column has $i$-th entry equal to $d_i$.
The third column has $i$-th entry equal to $d_i(d_i-1) = d_i^2 - d_i$:
an operation on columns leaves the determinant unchanged
but now makes the third column have entries $d_i^2$.
Perform similar operations on columns to obtain a Vandermonde matrix,
implying $\det(\tilde M) = K$, as desired.
\end{proof}

\begin{proof}[Proof of Lemma \ref{lemma:xmult}] 
% We only have to prove that $\Gamma(t_0)\in\Bru_{\eta\sigma}$ 
% imply the equality $\mult(\Gamma;t_0)=\mult(\sigma)$;  
% the reciprocal is then obvious since the multiplicity vector 
% determines the permutation.
Assume without loss of generality that $t_0 = 0$ 
and $J = (-\epsilon,\epsilon)$.
Notice that projective transformations 
(defined near Equation \ref{equation:projtrans})
have the effect of multiplying the functions $m_j$
by a positive multiple and therefore do not affect 
the multiplicity vector.
We therefore assume that
$\Gamma(0) = z_0 \in \widetilde\B_{n+1}^{+}$, $\sigma_{z_0}=\rho =\eta\sigma$.
% $\sigma_0 = \sigma(\Gamma;0)$.
Identifying $z_0$ and the orthogonal matrix $\Pi(z_0)$, as usual, 
we thus have $(z_0)_{i,i^{\eta\sigma}} = \varepsilon_i \in \{ \pm 1\}$
and $(z_0)_{i,j} = 0$ otherwise.
We use generalized triangular coordinates:
$\Gamma_L: (-\epsilon,\epsilon) \to z_0 \Lo_{n+1}^{1}$,
$\Gamma_L(t) = z_0 \bL(z_0^{-1} \Gamma(t))$,
$\Gamma(t) = \bQ(\Gamma_L(t))$.
Notice that $\det(\swminor(\Gamma_L(t),k))$ is a positive multiple
of $\det(\swminor(\Gamma(t),k))$, so that we may work with $\Gamma_L$.
Let $\Lambda_0 = (\Gamma_L(0))^{-1} \Gamma_L'(0) = \sum_i c_i \fl_i$,
$c_i > 0$;
let $C_i = \prod_{j < i} c_j$.

For given $i_0 \in \nmaisum$,
set $j_0 = (n+2-i_0)^{\sigma} = i_0^{\eta\sigma}$.
For $j > j_0$ we have $(\Gamma_L(t))_{i_0,j} = 0$;
also, $(\Gamma_L(t))_{i_0,j_0} = \varepsilon_{i_0} = \pm 1$.
For $j = j_0 - 1$, we have that
the derivative of the function $(\Gamma_L(t))_{i_0,j}$
is a
% v3v4 insert word below
sufficiently
smooth positive multiple of $(\Gamma_L(t))_{i_0,j+1}$;
we thus have $(\Gamma_L(t))_{i_0,j} = t\;c_j \varepsilon_{i_0} u_{i_0,j}(t)$
where $u_{i_0,j}$ is
% v3v4 insert word below
sufficiently
smooth and $u_{i_0,j}(0) = 1$.
Similarly, for $j = j_0 - \mu$, $\mu \ge 0$,
we have
$$ 
(\Gamma_L(t))_{i_0,j} = \frac{t^\mu}{\mu!}\;\frac{C_{j_0}}{C_j}\;
\varepsilon_{i_0} u_{i_0,j}(t), \qquad
u_{i_0,j}(0) = 1 $$
or, equivalently,
$$ 
(\Gamma_L(t))_{i,j} =
\frac{1}{(i^{\eta\sigma} - j)!} \;
\frac{\varepsilon_i C_{i^{\eta\sigma}}
t^{i^{\eta\sigma}}}{C_{j} t^{j}} \;
u_{i,j}(t), $$
where we follow the convention that $\frac{1}{\mu!} = 0$ for $\mu < 0$.

Consider now $\det(\swminor(\Gamma_L(t),k))$ as a function of $t$.
Write the entries as above.
The powers of $t$ can be taken out of the determinant,
yielding a factor $t^{\mult_k(\sigma)}$.
The terms $\varepsilon_\ast$ and $C_\ast$ can be taken out,
giving us a nonzero constant multiplicative factor.
Multiply the $i$-th row by $(i^{\eta\sigma} - 1)! \ne 0$:
the remaining matrix $M(t)$ has entries 
\[ M_{i,j}(t) =
\frac{(i^{\eta\sigma}-1)!}{(i^{\eta\sigma} - j)!} \;
u_{i,j}(t). \]
The matrix $\swminor(M(0),k)$ is just like the matrix $\tilde M$
in Lemma \ref{lemma:vandert},
and therefore, $\det(\swminor(M(0),k)) \ne 0$.
By continuity, $\det(\swminor(M(t),k))$ is nonzero near $t = 0$.
\end{proof}

\section{Final Remarks}
\label{sect:finalremarkso}

The content of the present paper was originally conceived
as part of a longer text   proposing a combinatorial approach to the study of the homotopy type of certain spaces of locally convex curves with fixed endpoints 
\cite{Goulart-Saldanha}. 
In a nutshell, 
let $\cL_n$ be the space of
locally convex curves $\Gamma:[0,1]\to\Spin_{n+1}$ 
(say, of class $C^r$)
with $\Gamma(0) = 1$, $\Gamma(1)\in\Quat_{n+1}$.
Theorem \ref{theo:pathcoordinates} implies 
that each $\Gamma \in \cL_n$ intersects
non-open Bruhat cells only for finitely many values 
$0=t_0<t_1<\cdots<t_\ell<t_{\ell+1}=1$
of the parameter $t$. 
We call the finite sequence of permutations 
$\iti(\Gamma)=(\sigma_1,\ldots,\sigma_\ell)
\in(S_{n+1}\smallsetminus\{e\})^\ast$, 
where $\Gamma(t_j)\in\Bru_{\eta\sigma_j}$, 
the \emph{itinerary} of $\Gamma$. 
The space $\cL_n$ is stratified 
into a disjoint union of subspaces of curves with fixed itinerary. 
This stratification, indexed on finite strings of nontrivial permutations, inherits (so to speak) several properties of the Bruhat stratification of $\Spin_{n+1}$, studied in the present paper. 
For instance, Theorem \ref{theo:Bruhat} is used to prove that each strata is contractible; Theorem \ref{theo:pathcoordinates} is a key step in providing each 
strata with the strucutre of a globally collared embedded topological submanifold (the second best thing next to having a smooth tubular neighbohood).
Also, there is a partial order $w_0\preceq w_1$ in the index set $\Word_n=(S_{n+1}\smallsetminus\{e\})^\ast$ that manifests itself as the inclusion between the topological closures of the corresponding strata indexed by the itineraries $w_0,w_1$, in much the same spirit as the Bruhat order. 
It turns out that the differentiability class of the curves 
under consideration plays a significant role in this construction 
\cite{Goulart-Saldanha1}.  
In \cite{Goulart-Saldanha} we use these results to construct
a CW-complex $\cD_n$ homotopically equivalent to $\cL_n$.

We extend the notion of \emph{multiplicity vector}
to $\Word_n$,
setting $\mult(\sigma_1,\ldots,\sigma_\ell)=
\mult(\sigma_1)+\cdots+\mult(\sigma_\ell)$.
One important open question is whether $w_0\preceq w_1$
implies $\mult(w_0)\leq\mult(w_1)$ (as in the Bruhat counterpart).
Without any assumption on the regularity of curves,  
this is essentially equivalent to Conjecture 2.4 in \cite{Shapiro-Shapiro3}. 
Such a result would greatly illuminate the structure of $\cD_n$.
It turns out that working with a space $\cL_n$ of 
sufficiently smooth curves 
%a careful choice of metric for $\cL_n$
allows us to circumvent this difficulty.

Conjecture 2.4 in \cite{Shapiro-Shapiro3}
can be regarded as an attempt at a multiplicative Sturm theory for linear differential ODEs of order $n+1>2$; 
the case $n=1$ corresponding to the classical (additive) Sturm theory. 
The conjecture was proved for $n=2$ in \cite{Shapiro-Shapiro4} and 
recently for $n\le 4$ in \cite{Saldanha-Shapiro-Shapiro},
using some material from the present paper,
particularly Theorem \ref{theo:mult}. 
The said material was also recently applied
(in work in progress with E. Alves, B. Shapiro and M. Shapiro)
to the problem of counting and classifying
connected components of the sets $\bQ^{-1}[\Bru_\sigma] \subseteq \Lo_{n+1}^1$
(for $\sigma \in S_{n+1}$);
Theorem \ref{theo:Bruhat} and Corollary \ref{coro:freesign}
are particularly relevant.

\bibliography{gs}
\bibliographystyle{plain}
%\printbibliography

%\parindent=0pt
%\parskip=0pt

\end{document}